\tikzset{point/.style={circle,inner sep=0pt,minimum size=4pt,fill=black}}
\tikzset{pointG/.style={circle,inner sep=0pt,minimum size=4pt,fill=gray}}
\theoremstyle{definition}
\newtheorem{definition}{Definition}[section]
\theoremstyle{plain}
\theoremstyle{plain}
\newtheorem{theorem}[definition]{Theorem}
\theoremstyle{plain}
\newtheorem{corollary}[definition]{Corollary}
\theoremstyle{plain}
\newtheorem{proposition}[definition]{Proposition}
\theoremstyle{remark}
\newtheorem{question}[definition]{Question}
\theoremstyle{remark}
\theoremstyle{remark}
\newtheorem{remark}[definition]{Remark}
\theoremstyle{remark}
\theoremstyle{remark}
\newcommand{\QQ}{\mathbb Q}
\newcommand{\ZZ}{\mathbb{Z}}
\newcommand{\CC}{\mathbb{C}}
\newcommand{\PP}{\mathbb{P}}
\newcommand{\FF}{\mathbb{F}}
\newcommand{\HH}{\mathcal{H}}
\DeclareMathOperator{\spec}{Spec}
\DeclareMathOperator{\Divi}{Div}
\DeclareMathOperator{\supp}{supp}
\DeclareMathOperator{\CH}{CH}
\title{Intersection matrices for the minimal regular model of ${X}_0(N)$ and applications to the Arakelov canonical sheaf}
\author{Paolo Dolce \and Pietro Mercuri}
\date{}
\newcommand{\Addresses}{{
  \bigskip
  \footnotesize
}
  
  P.~Dolce, \textsc{Ben Gurion University of the Negev, Be'er Sheva, Israel}\par\nopagebreak
  \textit{E-mail address}: \texttt{dolce@bgu.ac{.}il}

  \medskip
  
P.~Mercuri, \textsc{Dipartimento SBAI , ``Sapienza'' Università di Roma, Rome, Italy}\par\nopagebreak
  \textit{E-mail address}: \texttt{mercuri.ptr@gmail.com}

}
\begin{document}

\maketitle
\begin{abstract}
Let $N>1$ be an integer coprime to $6$ such that $N\notin\{5,7,13\}$ and let $g=g(N)$ be the  genus of  the modular curve $X_0(N)$. We compute the intersection matrices  relative to special fibres of the minimal regular model of $X_0(N)$. Moreover  we prove that the self-intersection of the Arakelov canonical sheaf of $X_0(N)$ is asymptotic to $3g\log N$, for $N\to+\infty$.
\end{abstract}
\makeatletter

\@starttoc{toc}
\makeatother

\section{Introduction}
\subsection{Presentation of the results}
The self-intersection of the Arakelov canonical sheaf of an arithmetic surface, here denoted by $\langle\overline\omega,\overline\omega\rangle$, is a crucial arithmetic invariant for several reasons (assuming that the  genus of the generic fibre is at least $2$):
\begin{itemize}
\item It appears in the arithmetic Noether's formula proved in \cite{MB89}. So it is closely related  to the Faltings height of the Jacobian of the generic fibre of the arithmetic surface.

\item  The inequality $\langle\overline\omega,\overline\omega\rangle>0$ is equivalent to the arithmetic Bogomolov conjecture for arithmetic surfaces proved by Ullmo and Zhang in \cite{Ull98} and \cite{Zha93}. 

\item Suitable upper bounds  for $\langle\overline\omega,\overline\omega\rangle$ imply an effective version of the Mordell conjecture (see for instance \cite{MB90}).
\end{itemize}

The study of the quantity $\langle\overline\omega,\overline\omega\rangle$ is in general tremendously difficult since it requires a deep knowledge of the specific model of the curve under investigation together with some analytic information involving the curve seen as a point inside the compactified moduli space. Nevertheless, some partial results are known for Fermat curves, hyperelliptic curves, Belyi curves and modular curves (see for instance  \cite{dJ04},\cite{CK09}, \cite{ECdJMB11}, \cite{K13}, \cite{Jav14}).

 The exact value  of $\langle\overline\omega,\overline\omega\rangle$ for minimal regular models of modular curves is currently not known, but it makes sense to try to figure out what is the asymptotic behaviour of $\langle\overline\omega,\overline\omega\rangle$ when the level grows. This problem has been partially investigated for all the ``classical'' modular curves $X_0(N)$, $X_1(N)$, $X(N)$. In particular if we denote with $g=g(N)$ the genus of curve, it turned out that\footnote{The notation $\sim$ between positive real valued sequences reads as ``is asymptotic to''. In symbols: $a_N\sim b_N$,  for $N\to\infty$ if $\lim_{N\to\infty}\frac{a_N}{b_N}=1$.} 
\begin{equation}\label{eq:fund_as}
\langle\overline\omega,\overline\omega\rangle\sim 3g\log N,\quad \text{ for } N\to+\infty,
\end{equation}
in the following cases:
\begin{itemize}
\item For $X_0(N)$ when $N$ is square-free with $(N,6)=1$ (see \cite{AU97} and \cite{MU98}) and moreover when $N\in\{p^2,p^3,p^4\}$ for $p>5$  prime (see \cite{BBC20} and \cite{BMC22}).
\item For $X_1(N)$ when $N$ is at the same time square-free, odd and divisible by at least two coprime integers bigger than $4$. (see \cite{May14}). 
\item For $X(N)$ when $N$ is at the same time square-free, composite and odd (see \cite{GvP22}).
\end{itemize}

In this paper we restrict our attention to the case of  $X_0(N)$ and its minimal regular model $\mathcal X$, which has singular and not reduced fibres only above the primes dividing the level $N$. In order to calculate the ``finite contribution'' of  $\langle\overline\omega,\overline\omega\rangle$, for every $p\mid N$ one needs the matrix containing all the intersection numbers between the irreducible components of the fibre $\mathcal X_p$ over $p$. We compute such matrices for every $N>1$  coprime to $6$ such that $N\notin\{5,7,13\}$. The assumption $(N,6)=1$ on the level is substantial, since at the moment we don't have well behaved regular  models  of $X_0(N)$ when either $2\mid N$ or $3\mid N$. On the other hand, the restriction $N\notin\{5,7,13\}$ is just a matter of convenience, since these cases must be treated with slightly different methods. In the literature  the intersection matrices of the minimal regular models of $X_0(N)$  had been explicitly computed only for  $N$ at the same time coprime to $6$ and squarefree (see \cite{DR73} and \cite[Appendix]{Maz77}) and for $N\in\{p^2,p^3,p^4\}$ and $p$ a prime (see \cite{Edi90} and \cite{BMC22}). 
Moreover, we use the intersection matrices of $\mathcal X$ to study the asymptotics of $\langle\overline\omega,\overline\omega\rangle$ for the modular curves $X_0(N)$, when $N$ is coprime to $6$ extending (as expected) the aforementioned results. In particular we prove the following theorem:
\begin{theorem}\label{thm:main_res}
Let $N>1$ be an integer coprime to $6$ and let $\overline\omega$ be the Arakelov canonical sheaf of the  minimal regular model of $X_0(N)$, then
\[
\langle\overline\omega,\overline\omega\rangle\sim 3g\log N,\quad \text{ for } N\to+\infty.
\]
\end{theorem}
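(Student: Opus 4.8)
The plan is to split $\langle\overline\omega,\overline\omega\rangle$ into a \emph{finite} contribution supported on the bad primes $p\mid N$ and an \emph{archimedean} contribution coming from the Riemann surface $X_0(N)(\CC)$, and then to show that their sum is $3g\log N\,(1+o(1))$. Writing $\mathcal X$ for the minimal regular model and $\omega=\omega_{\mathcal X/\ZZ}$ for its relative dualizing sheaf equipped with the Arakelov metric, I would start from the standard decomposition
\[
\langle\overline\omega,\overline\omega\rangle=\sum_{p\mid N}a_p\log p+\langle\overline\omega,\overline\omega\rangle_\infty,
\]
in which each $a_p$ is a purely geometric local term depending only on the special fibre $\mathcal X_p$ and $\langle\overline\omega,\overline\omega\rangle_\infty$ is governed by the Arakelov--Green's function (compare the admissible-pairing formalism of \cite{Zha93} and the metric comparison of \cite{AU97}). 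Since for $p\nmid N$ the fibre is smooth and contributes nothing, only the primes dividing the level intervene, which is exactly the range in which the intersection matrices of this paper are available.

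For the finite part I would work prime by prime. Fix $p\mid N$ and let $C_1,\dots,C_r$ be the irreducible components of $\mathcal X_p$, with multiplicities $m_i$ and arithmetic genera $\pi_i$. The local term $a_p$ is obtained by writing the vertical part of $\omega$ in the basis of components and solving the linear system governed by the intersection matrix $M_p=(C_i\cdot C_j)_{i,j}$ computed in the first part of this paper: one uses adjunction $\omega\cdot C_i=2\pi_i-2-C_i^2$ to build the right-hand side, inverts $M_p$ on the orthogonal complement of the fibre class, and reads off $a_p$ as an explicit quadratic expression in the entries of $M_p^{-1}$. The explicit matrices are indispensable here precisely because of non-squarefree levels: when $v_p(N)\ge 2$ the fibre $\mathcal X_p$ acquires additional components (as in the Deligne--Rapoport and Edixhoven descriptions of $X_0(N)$ in bad characteristic), and it is these that make the local contribution scale like $g\,v_p(N)\log p$ rather than $g\log p$, so that summing over $p\mid N$ produces the full $g\sum_{p\mid N}v_p(N)\log p=g\log N$ instead of only $g\log\mathrm{rad}(N)$.

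I would then assemble the sum. By the Eichler mass formula the number of singular (supersingular) points on $\mathcal X_p$, and hence the size of the off-diagonal entries of $M_p$, is of order $g$; combined with the genus--index asymptotics $g\sim\psi(N)/12$, where $\psi(N)=[\mathrm{SL}_2(\ZZ):\Gamma_0(N)]$, this fixes the constant and yields $\sum_{p\mid N}a_p\log p\sim 3g\log N$. For the archimedean term I would invoke the comparison of the Arakelov and hyperbolic metrics from \cite{AU97}, together with estimates on the hyperbolic Green's function near the cusps (via Eisenstein series and a spectral-gap bound, in the spirit of \cite{MU98}), to show that $\langle\overline\omega,\overline\omega\rangle_\infty=o(g\log N)$ uniformly in $N$, so that it leaves the leading term untouched.

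The step I expect to be the main obstacle is this last archimedean estimate: bounding $\langle\overline\omega,\overline\omega\rangle_\infty$ uniformly as $N\to+\infty$ demands genuine analytic input—control of the Green's function and of the deviation of the Arakelov metric from the Poincaré metric over the whole of $X_0(N)(\CC)$—and is the one piece that does not reduce to the combinatorics of the fibres. By contrast, the finite part, although lengthy, is now a finite computation for each $p$ thanks to the explicit intersection matrices; the residual difficulty there is organizational, namely verifying that the local contributions, taken over all $p\mid N$ and over all shapes of bad fibre permitted by the hypotheses $(N,6)=1$ and $N\notin\{5,7,13\}$, recombine into the single clean asymptotic $3g\log N$.
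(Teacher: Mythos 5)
Your decomposition assigns the entire main term to the finite places and treats the archimedean contribution as an error term; this is exactly backwards, and it also contradicts your own scaling heuristic. In the paper (and already in \cite{AU97}, \cite{MU98}, \cite{BBC20}, \cite{BMC22}), the main term $3g\log N$ splits as $2g\log N$ from the archimedean side plus $g\log N$ from the finite side. Concretely, the paper writes $\langle\overline\omega,\overline\omega\rangle$ as a sum of three pieces: $(a)=-4g(g-1)\langle H_0,H_\infty\rangle$ built from the cuspidal horizontal sections, $(b)$ a term involving only the vertical $\mathbb Q$-divisors $V_0,V_\infty$ supported on the bad fibres, and $(c)$ Néron--Tate height terms. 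Since $H_0$ and $H_\infty$ meet distinct components of each special fibre, they are disjoint and $2\langle H_0,H_\infty\rangle=-\mathcal G(0,\infty)$ is a purely archimedean Green's function quantity; by \cite[Theorem 1.2]{MvP22} the term $(a)$ equals $2g\log N+o(g\log N)$, i.e.\ two thirds of the main term. The finite term $(b)$, evaluated exactly from the intersection matrices and the explicit solutions of the linear systems, equals $g\log N+o(g\log N)$ --- which is precisely what your own heuristic $g\sum_{p\mid N}v_p(N)\log p=g\log N$ predicts. Your subsequent assertion that $\sum_{p\mid N}a_p\log p\sim 3g\log N$ is therefore inconsistent with the sentence preceding it, and your planned estimate $\langle\overline\omega,\overline\omega\rangle_\infty=o(g\log N)$ is false: that quantity carries $2g\log N$. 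Had you carried out the plan, the local computations would have delivered $g\log N$, and no uniform Green's function bound could have closed the resulting gap of $2g\log N$.

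There is also a structural problem upstream of the asymptotics: a splitting $\langle\overline\omega,\overline\omega\rangle=\sum_{p\mid N}a_p\log p+\langle\overline\omega,\overline\omega\rangle_\infty$ with each $a_p$ ``depending only on the special fibre $\mathcal X_p$'' does not exist without global input, because $\omega$ is not a vertical class. To localize anything one must first choose horizontal representatives: the paper sets $G_m=\mathcal K-(2g-2)H_m$ for $m\in\{0,\infty\}$, corrects by vertical $\mathbb Q$-divisors $V_m$ (via Zariski's lemma, \cite[Lemma 2.2.1]{Mor13}) so that $D_m=G_m+V_m$ is $f$-numerically trivial, and only then applies Faltings--Hriljac to identify $\langle D_m,D_m\rangle$ with Néron--Tate heights and the Manin--Drinfeld theorem to make the cuspidal class $(D_\infty-D_0)_{\mathbb Q}$ torsion, which is what eliminates the cross terms. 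The intersection matrices enter afterwards, to solve for $V_0,V_\infty$ and evaluate $(b)$. The height terms $h_m$ are not identically zero and are shown to be $o(g\log N)$ only through the nontrivial estimates of \cite{MU98} and \cite{AU97}; your proposal has no counterpart for this step, nor for the Manin--Drinfeld input, without which the reduction to fibre-by-fibre linear algebra is not available.
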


\subsection{Overview of the paper}
We follow the approach of \cite{BBC20} and \cite{BMC22} for the case $N\in\{p^2,p^3,p^4\}$, but with several refinements and improvements. Let $N>1$ be an integer coprime to $6$; first of all we make use of the Edixhoven's model for $X_0(N)$ (see \cite{Edi90}): it is a regular model $\mathcal X'\to\spec \ZZ$ with the property that for every prime $\ell\nmid N$ the fibre $\mathcal X'_\ell$ over $\ell$ is smooth and irreducible. Above the primes dividing $N$ we find the so called \emph{special fibres} that have many irreducible components, that can be completely described. The Edixhoven's model is in general not minimal and in some cases it is necessary to perform several blow downs in order to obtain the minimal regular model $\mathcal X$. Each intersection matrix of the special fibres of $\mathcal X$ is the symmetric square matrix whose entries are the intersection numbers of the components of the special fibre. They appear as  the coefficient matrices of some linear systems whose $\mathbb Q$-solutions are the multiplicities of two special rational vertical divisors $V_0$ and $V_\infty$ supported on the special fibres of $\mathcal X$. By using the Faltings-Hriljac's version of the Hodge's index theorem and the Manin-Drinfeld's theorem it is possible to write $\langle\overline\omega,\overline\omega\rangle$ in the following way:
\begin{equation*}
\langle\overline{\omega}, \overline{\omega}\rangle=\underbrace{-4g(g-1)\langle H_0, H_\infty\rangle}_{(a)}\underbrace{+\frac{g\langle V_0, V_\infty\rangle}{g-1}-\frac{\langle V_0, V_0\rangle+\langle V_\infty, V_\infty\rangle}{2g-2}}_{(b)}\underbrace{+\frac{h_0+h_\infty}{2}}_{(c)},
\end{equation*}
where:
\begin{itemize}
\item[$(a)$] is a multiple of the Arakelov intersection between $H_0$ and $H_\infty$, which are the prolongation on $\mathcal X$ of the two cusps $0$ and $\infty$ of $X_0(N)$;
\item[$(b)$] contains only the intersection between $V_0$ and $V_\infty$ and their respective self-intersections;
\item[$(c)$] is a ``height term'' related to the Néron-Tate's height of the restriction of some  specific  horizontal divisors  on the generic fiber.
\end{itemize}
The pieces $(a)$ and $(c)$ contain the data relative to the ``self-intersection at infinity'' of $\overline \omega$. Their asymptotics have already been estimated respectively in \cite{MvP22} and \cite[Section 6]{MU98}. In this paper we are able to exactly calculate the term $(b)$. The main difficulty consisted in finding an explicit expression for the divisors $V_0$ and $V_\infty$ for a general $N$ since the dimension of the matrix of the above mentioned linear systems depends on $N$. We observed that the exact solutions of these linear systems, written in a suitable way, have a ``simple'' shape that we proved is the correct formula for every level $N$ using the software Mathematica. Our method for finding $V_0$ and $V_\infty$ is completely general and depends on the Zariski's lemma for vector spaces (see \cite[Lemma 2.2.1]{Mor13}). Whereas in \cite{BBC20} and \cite{BMC22} the authors use some \emph{ad hoc} techniques. Also the final computation of the summand (b) is performed by using the software Mathematica.

The structure of the paper is the following: In \Cref{sec:prel} we fix the notation and we briefly recall the needed basic concepts. In \Cref{sec:Edi} we give a detailed description of the Edixhoven's model and we compute the intersection matrices. In \Cref{sec:non_min-edi} we discuss the process of obtaining the minimal regular model, when necessary, and, in \Cref{sec:min}, we give the intersection matrices for these minimal models. Finally, \Cref{sec:main_res} is devoted to the proof of \Cref{thm:main_res}. Moreover  Appendix \ref{sec:app} contains the drawings of the special fibers of the Edixhoven's model. In the github repository \cite{codes} we provide the codes of Mathematica we used for the computations.

\paragraph{Acknowledgements}
The authors are deeply grateful to Professor \emph{P. Parent} for helping them with some computations and for his suggestions. A special thanks goes also to Professor \emph{A. Chambert-Loir} for his kind comments. This research project started on 2021 when both the authors where postdoc researchers at the University of Udine.

\section{Preliminaries}\label{sec:prel}

\subsection{Arakelov theory}\label{sec:arakelov}
In this section we briefly recall the basic notions of Arakelov geometry on arithmetic surfaces, for more details the reader can consult the original Arakelov paper \cite{Ara74} together with Faltings seminal paper \cite{Fal84}, or  the more recent books \cite{Lan88} and \cite{Mor14}. For our purposes, it is enough to deal with the theory for curves defined over $\mathbb Q$, but everything can be easily written down when the ground field is any number field, in fact it is enough to add more places at infinity.

Let $X$ be a smooth, geometrically integral,  projective curve over $\mathbb Q$ of genus $g>0$, and let $f\colon\mathcal X\to \spec \mathbb Z$ be any regular model of $X$. On $\mathcal X$ it is not possible to define a meaningful intersection pairing between divisors that descends to the Picard group. Nevertheless, there is  a ``partial'' $\mathbb Z$-valued intersection pairing which is well defined when at least one of the divisors is supported over a prime  $p$ (i.e., contained in one fiber of $f$). For every $D\in\Divi(\mathcal X)$ and every $D'\in\Divi(\mathcal X)$ such that $\supp (D)\subseteq \mathcal X_p$ we denote this pairing by $D\cdot D'$; for further details and properties the reader can check \cite[Chapter 9]{Liu06}. In order to intersect horizontal divisors we need to take into account the base change curve $X_{\mathbb C}$ as a fiber lying over the Archimedean place of $\mathbb Z$, i.e., as a 
\emph{fiber at infinity}. At this point one needs to fix a K\"ahler form $\Omega$ on $X_{\mathbb C}$, and in general the intersection numbers depend on such a choice. Arakelov proposed a canonical choice for this K\"ahler form which here we denote by $\Omega^{\text{can}}$ (see for instance \cite[page 112]{Mor14}). One can extend the notion of line bundle on $\mathcal X$  and introduce the concept of \emph{admissible hermitian line bundle} which is a couple $\overline{\mathcal L}=(\mathcal L, h)$ where $\mathcal L$ is a line bundle on $\mathcal X$ and $h$ is a $C^{\infty}$-hermitian metric on the base change $\mathcal L_{\mathbb C}$ (on $X_{\mathbb C}$) satisfying:
\[
c_1(\overline{\mathcal L})=a\Omega^{\text{can}},\quad \text{for } a\in\mathbb R,
\]
where $c_1(\overline{\mathcal L})$ is the first Chern form of $\overline{\mathcal L}$. By using the Deligne pairing and the notion of Arakelov degree for hermitian line bundles on $\spec\mathbb Z$ one can define a $\mathbb R$-valued intersection pairing between admissible hermitian line bundles which is denoted by $\langle \overline{\mathcal L}, \overline{\mathcal L}'\rangle$.

There is a map that relates admissible hermitian line bundle to \emph{Arakelov divisors} which are the pairs $\overline D= (D,\alpha)$ where $D\in\Divi(\mathcal X)$ and $\alpha\in\mathbb R$. Here it is important to mention that the real constant $\alpha$ appearing in $\overline D$ can be recovered as the integral of a Green's function over $X_{\mathbb C}$. The Arakelov divisors form a group under the obvious notion of addition and one can form the Arakelov-Chow group $\overline{\CH}^1(X)$ by performing the quotient by an adequate notion of principal Arakelov divisors. In \cite{Ara74} is defined a $\mathbb R$-valued bilinear and symmetric intersection pairing among Arakelov divisors denoted as $\langle \overline D, \overline D'\rangle$. We point out that an ordinary divisor  $D\in \Divi(\mathcal X)$ can be identified with the Arakelov divisor $(D,0)$ and, therefore  the notation  $\langle D,  D'\rangle$ makes sense. In particular, if $D$ is any divisor and $D'$ is a divisor supported over the prime $p$ and we have the following fundamental relationship:
\[
\langle D,  D'\rangle=(D\cdot D')\log p. 
\]
A crucial result of the theory  (see for instance \cite[Sections 4.3 and 4.4]{Mor14}) shows that the aforementioned map between admissible line bundles and Arakelov divisors descends to a group isomorphism between the isometry classes of admissible line bundles and  $\overline{\CH}^1(X)$ and moreover preserves  the intersection pairings.

The relative canonical sheaf $\omega$ of $f\colon\mathcal X\to\spec \mathbb Z$ can be endowed with a canonical hermitian metric so that we obtain the \emph{Arakelov canonical sheaf $\overline{\omega}$} that satisfies an arithmetic version of the adjunction formula (see for instance \cite[Section 4.5]{Mor14}). Let $\mathcal K$ be a canonical divisor of $\mathcal X$ i.e., every divisor whose corresponding line bundle is isomorphic to $\omega$, then one can easily prove  that as an Arakelov divisor $\mathcal K$ corresponds to $\overline{\omega}$ through the above mentioned isomorphism. Therefore we conclude that $\langle\mathcal K,\mathcal K\rangle=\langle\overline \omega,\overline \omega\rangle$. Let $\Delta$ be diagonal of $X_{\mathbb C}\times X_{\mathbb C}$, then there exists a unique (symmetric) $C^\infty$-function $\mathcal G\colon (X_{\mathbb C}\times X_{\mathbb C})- \Delta\to\mathbb R$ such that:
\begin{itemize}
\item[$(i)$] around any point $P\in X_{\mathbb C}$ we can write $\mathcal G(P,\cdot)=-\log\vert z\rvert^2+u$, where $z$ is a  chart centered in $P$ and $u$ is a $C^{\infty}$-function;
\item[$(ii)$] $\frac{1}{2\pi i}\partial\bar\partial \mathcal G(P,\cdot)=\Omega^{\text{can}}$;
\item[$(iii)$] $\int_{X_{\mathbb C}}\mathcal G(P,\cdot)\Omega^{\text{can}}=0$.
\end{itemize}
This function is called the \emph{(canonical) Green's function} on $X_{\mathbb C}$ and it is crucial to compute the contribution at infinity of the Arakelov intersection pairing. For instance if $\overline P$ and $\overline Q$ are disjoint Arakelov divisors that are the closures on $\mathcal X$ respectively of $P,Q\in X(\mathbb Q)$, then one can show that 
\[
2\langle \overline P, \overline Q  \rangle= -\mathcal G(P,Q).
\]
\subsection{Modular curves}

In this subsection we explain our notation and recall some basic facts about modular curves. Our main reference for this subsection is \cite{DS05}. Let $\HH:=\{ \tau \in \CC: \mathrm{Im}(\tau)>0 \}$ be the complex upper half-plane, let $\PP^1(\QQ)$ be the set of cusps and denote by $\HH^*:=\HH\cup\PP^1(\QQ)$ the extended complex upper half-plane. There is an action of $\mathrm{SL}_2(\ZZ)$ on $\HH^*$ given, for $\left(\begin{smallmatrix}a&b\\ c&d\end{smallmatrix}\right)\in\mathrm{SL}_2(\ZZ)$ and $\tau\in\HH^*$, by
\[
\begin{pmatrix}a&b\\ c&d\end{pmatrix}\tau:=\frac{a\tau+b}{c\tau+d}.
\]
Let $N$ be a positive integer, the subgroup 
\[
\Gamma(N):=\{\left(\begin{smallmatrix}a&b\\ c&d\end{smallmatrix}\right)\in\mathrm{SL}_2(\ZZ):\left(\begin{smallmatrix}a&b\\ c&d\end{smallmatrix}\right)\equiv \left(\begin{smallmatrix}1&0\\ 0&1\end{smallmatrix}\right) \mod N\}
\]
is called \emph{principal congruence subgroup of level $N$}. Each subgroup $\Gamma$ of $\mathrm{SL}_2(\ZZ)$ containing $\Gamma(N)$ is called \emph{congruence subgroup of level $N$}. We define the modular curve  over $\CC$ associated to a congruence subgroup $\Gamma$ in the following way:
\[
X_\Gamma:=\Gamma\backslash \HH^*.
\]
We also define, for every positive integer $N$, the congruence subgroup
\[
\Gamma_0(N):=\{\left(\begin{smallmatrix}a&b\\ c&d\end{smallmatrix}\right)\in\mathrm{SL}_2(\ZZ):\left(\begin{smallmatrix}a&b\\ c&d\end{smallmatrix}\right)\equiv\left(\begin{smallmatrix}*&*\\ 0&*\end{smallmatrix}\right) \mod N\},
\]
(where any $*$ in the matrices means  ``no restriction") and we define the associated modular curve:
\[
X_0(N):=\Gamma_0(N)\backslash \HH^*.
\]
The curve $X_0(N)$ is defined over $\CC$, but it can be defined over $\QQ$ as well (see \cite[Chapter~7]{DS05}).

\begin{remark}\label{rem:genus}
The genus $g$ of $X_0(N)$ is given, for $N\ge 3$, by
\[
g=1+\frac{d(N)}{12}-\frac{\varepsilon_2(N)}{4}-\frac{\varepsilon_3(N)}{3}-\frac{\varepsilon_\infty(N)}{2},
\]
with $d$, $\varepsilon_2$, $\varepsilon_3$ and $\varepsilon_\infty$ multiplicative functions given, for a prime $p$, by
\begin{align*}
d(p^n)&=p^{n-1}(p+1),\\
\varepsilon_2(p^n)&=\begin{cases}
0, & \text{if }p=2 \text{ and } n\ge 2, \\
1+\left(\frac{-1}{p}\right), & \text{otherwise}, \\
\end{cases} \\
\varepsilon_3(p^n)&=\begin{cases}
0, & \text{if }p=3 \text{ and } n\ge 2, \\
1+\left(\frac{-3}{p}\right), & \text{otherwise},
\end{cases} \\
\varepsilon_\infty(p^n)&=\begin{cases}
p^{\frac{n}{2}-1}(p+1), & \text{if }n\text{ is even}, \\
2p^{\frac{n-1}{2}}, & \text{if }n\text{ is odd},
\end{cases}
\end{align*}
where $\left(\frac{\cdot}{p}\right)$ is the Kronecker symbol (see for example \cite[Section~3.9, pag.~107]{DS05}). In our case, i.e., $N>1$ coprime with $6$, we have that
\begin{equation}\label{eq:ogenus}
g=\frac{d(N)}{12}+o(N), \quad \text{for }N\to+\infty.
\end{equation}
\end{remark}
The modular curve $X_0(N)=\Gamma_0(N)\backslash \HH^*$ can be seen as the set of $\CC$-points of a coarse moduli space where each point $\Gamma_0(N)\tau$, for $\tau\in\HH$, represents a pair $(E,C)$, up to a suitable equivalence relation, where $E$ is an elliptic curve over $\CC$ and $C$ is cyclic subgroup of order $N$ of $E$. Two pairs $(E,C)$ and $(E',C')$ are equivalent if $E$ and $E'$ are isomorphic and this isomorphism sends $C$ to $C'$ (see \cite[Section 1.5]{DS05}). The reduction modulo a prime $\ell\nmid N$ is compatible with the moduli space structure by Igusa's theorem (see \cite[Section 8.6]{DS05}), hence in a pair $(E,C)$ the curve $E$ is just defined over $\FF_\ell$. The reduction modulo a prime $p\mid N$ is more complicated (see \cite{KM85}) but some points can still be seen as pairs $(E,C)$ where the curve $E$ is defined over $\FF_p$. We call a point on $X_0(N)$ reduced modulo a prime, dividing or not the level $N$, \emph{supersingular} if it can be seen as a pair $(E,C)$ such that $E$ is a supersingular elliptic curve over the corresponding finite field, i.e., an elliptic curve whose endomorphism ring is isomorphic to an order of a quaternion algebra of dimension 4 over the finite field. The $j$-invariant of a supersingular elliptic curve is called a supersingular $j$-invariant.

\section{Regular models of $X_0(N)$}\label{sec:minmod}

 In \cite{Edi90}, Edixhoven found a regular model of the curve $X_0(N)$, for $N$ coprime to $6$. We denote Edixhoven's model by $\mathcal X'$. In most of the cases this regular model is minimal. But in some cases some blow downs are necessary to get the minimal regular model (see \Cref{sec:non_min-edi} below for more details about this). In both cases we denote by $\mathcal X$ the minimal regular model of $X_0(N)$. In this paper we need to study the geometry of the fibres of $\mathcal X'$ and $\mathcal X$ which can be not reduced, so from now on with the word ``genus''(of a curve) we tacitly mean ``arithmetic genus''.

\subsection{The Edixhoven's model}\label{sec:Edi}
\begin{center}
\framebox{In this is subsection we fix $N>1$ and $(N,6)=1$.}
\end{center}

\noindent Here we describe in a very detailed way the Edixhoven's model $\mathcal X'$; the related drawings can be found in Appendix \ref{sec:app}. For a prime $\ell\nmid N$, the fiber $\mathcal X'_\ell$ of $\mathcal X'$ at $\ell$ is smooth (see \cite[Theorem~8.6.1 and the discussion below]{DS05}) and it is isomorphic to $X_0(N)/\FF_\ell$. The fiber $\mathcal X'_p$ at $p$ a prime dividing $N$ is more complicate. Let $N=p^nM$, with $p\nmid M$. The fiber $\mathcal X'_p$ has $n+1$ \emph{Igusa components} denoted by $C'_a$ and indexed by an integer $a\in\{0,1,\ldots,n\}$. Each $C'_a$ is a curve isomorphic to $X_0(M)/\FF_p$ with multiplicity $\phi(p^{\min(a,n-a)})$, where $\phi$ is the Euler's totient function. 
Let
\begin{equation}\label{eq:xi}
\xi(m):=\frac{1-\left(\frac{m}{p}\right)}{2},
\end{equation}
where $\left(\frac{\cdot}{p}\right)$ is the Kronecker symbol, in particular we only use the following two special cases:
\begin{align*}
\xi(-1)=\frac{2-\varepsilon_2(p^n)}{2}=\begin{cases}
0, & \text{if }p\equiv 1 \pmod 4, \\
1, & \text{if }p\equiv 3 \pmod 4,
\end{cases} \quad \text{and} \quad \xi(-3)=\frac{2-\varepsilon_3(p^n)}{2}=\begin{cases}
0, & \text{if }p\equiv 1 \pmod 3, \\
1, & \text{if }p\equiv 2 \pmod 3.
\end{cases}
\end{align*}
There are
\begin{equation}\label{eq:k}
k:=\frac{p-1}{12}d(M)-\frac{1}{2}\xi(-1)\varepsilon_2(M)-\frac{1}{3}\xi(-3)\varepsilon_3(M)
\end{equation}
points where each Igusa component intersects all the others (the notation is the same as \Cref{rem:genus}). Among these $k$ points, $\left\lfloor\frac{p}{12}\right\rfloor$ correspond to the supersingular $j$-invariants modulo $p$ for $j\notin\{0,1728\}$. The remaining (if any) comes from the ramification points over the possibly supersingular $j$-invariant $j\in\{0,1728\}$. Moreover, there are some \emph{extra components} whose number and properties depend on $p\bmod{12}$ and are related to the elliptic points of $X_0(M)$.

\begin{description}

\item[Case $p\equiv 1 \pmod{12}$.] In this case for each $C'_a$ with $a\notin\{0,n\}$, there are $\varepsilon_2(M)$ extra components denoted by $E'_{a,i}$, with $i=1,\ldots,\varepsilon_2(M),$ such that each $E'_{a,i}$ has multiplicity $\frac{1}{2}\phi(p^{\min(a,n-a)})$ and intersects only $C'_a$ at the $i$-th elliptic point of $X_0(M)$ corresponding to the $j$-invariant $j=1728$. Moreover, for each $C'_a$ with $a\notin\{0,n\}$, there are $\varepsilon_3(M)$ extra components denoted by $F'_{a,i}$, with $i=1,\ldots,\varepsilon_3(M),$ such that $F'_{a,i}$ has multiplicity $\frac{1}{3}\phi(p^{\min(a,n-a)})$ and intersects only $C'_a$ at the $i$-th elliptic point of $X_0(M)$ corresponding to the $j$-invariant $j=0$. See \Cref{fig:fig1} in Appendix \ref{sec:app}.

\item[Case $p\equiv 5 \pmod{12}$.] In this case for each $C'_a$ with $a\notin\{0,n\}$, there are $\varepsilon_2(M)$ extra components denoted by $E'_{a,i}$, with $i=1,\ldots,\varepsilon_2(M),$ such that each $E'_{a,i}$ has multiplicity $\frac{1}{2}\phi(p^{\min(a,n-a)})$ and intersects only $C'_a$ at the $i$-th elliptic point of $X_0(M)$ corresponding to the $j$-invariant $j=1728$. Moreover, if $n$ is even there are $\varepsilon_3(M)$ extra components denoted by $F'_{\infty,i}$, with $i=1,\ldots,\varepsilon_3(M),$ each with multiplicity $\frac{1}{3}\varepsilon_\infty(p^n)$ that intersects in a single point all the $C'_a$ for $0\le a<\frac{n}{2}$, intersects in a separate point $C'_{\frac{n}{2}}$ and intersects in a separate single point all the $C'_a$ for $\frac{n}{2}<a\le n$; these intersection points on the Igusa components correspond to the $i$-th elliptic point of $X_0(M)$ associated to the $j$-invariant $j=0$. If $n$ is odd there are $2\varepsilon_3(M)$ extra components denoted by $F'_{0,i}$ and $F'_{n,i}$, with $i=1,\ldots,\varepsilon_3(M),$ each with multiplicity $\frac{1}{2}\varepsilon_\infty(p^n)$ and such that $F'_{0,i}$ intersects $F'_{n,i}$ (same $i$) in one point, $F'_{0,i}$ intersects in a single point every $C'_a$ for $0\le a<\frac{n}{2}$ and $F'_{n,i}$ intersects in a single point every $C'_a$ for $\frac{n}{2}<a\le n$; these intersection points on the Igusa components correspond to the $i$-th elliptic point of $X_0(M)$ associated to the $j$-invariant $j=0$. See \Cref{fig:fig2,fig:fig3} in Appendix \ref{sec:app}.

\item[Case $p\equiv 7 \pmod{12}$.] In this case there are $\varepsilon_2(M)$ extra components denoted by $E'_{\infty,i}$, with $i=1,\ldots,\varepsilon_2(M),$ each with multiplicity $\frac{1}{2}\varepsilon_\infty(p^n)$ and that intersects in a single point all the $C'_a$ for $0\le a<\frac{n}{2}$, intersects in a separate single point all the $C'_a$ for $\frac{n}{2}<a\le n$ and intersects (only when $n$ is even) in a separate point $C'_{\frac{n}{2}}$; these intersection points on the Igusa components correspond to the $i$-th elliptic point of $X_0(M)$ associated to the $j$-invariant $j=1728$. Moreover, for each $C'_a$ with $a\notin\{0,n\}$, there are $\varepsilon_3(M)$ extra components denoted by $F'_{a,i}$, with $i=1,\ldots,\varepsilon_3(M),$ such that $F'_{a,i}$ has multiplicity $\frac{1}{3}\phi(p^{\min(a,n-a)})$ and intersects only $C'_a$ at the $i$-th elliptic point of $X_0(M)$ corresponding to the $j$-invariant $j=0$. See \Cref{fig:fig4} in Appendix \ref{sec:app}.

\item[Case $p\equiv 11 \pmod{12}$.] In this case there are $\varepsilon_2(M)$ extra components denoted by $E'_{\infty,i}$, with $i=1,\ldots,\varepsilon_2(M),$ each with multiplicity $\frac{1}{2}\varepsilon_\infty(p^n)$ and that intersects in a single point all the $C'_a$ for $0\le a<\frac{n}{2}$, intersects in a separate single point all the $C'_a$ for $\frac{n}{2}<a\le n$ and intersects  (only when $n$ is even) in a separate point $C'_{\frac{n}{2}}$; these intersection points on the Igusa components correspond to the $i$-th elliptic point of $X_0(M)$ associated to the $j$-invariant $j=1728$. Moreover, if $n$ is even there are $\varepsilon_3(M)$ extra components denoted by $F'_{\infty,i}$, with $i=1,\ldots,\varepsilon_3(M),$ each with multiplicity $\frac{1}{3}\varepsilon_\infty(p^n)$ that intersects in a single point all the $C'_a$ for $0\le a<\frac{n}{2}$, intersects in a separate point $C'_{\frac{n}{2}}$ and intersects in a separate single point all the $C'_a$ for $\frac{n}{2}<a\le n$; these intersection points on the Igusa components correspond to the $i$-th elliptic point of $X_0(M)$ associated to the $j$-invariant $j=0$. If $n$ is odd there are $2\varepsilon_3(M)$ extra components denoted by $F'_{0,i}$ and $F'_{n,i}$, with $i=1,\ldots,\varepsilon_3(M),$ each with multiplicity $\frac{1}{2}\varepsilon_\infty(p^n)$ and such that $F'_{0,i}$ intersects $F'_{n,i}$ (same $i$) in one point, $F'_{0,i}$ intersects in a single point every $C'_a$ for $0\le a<\frac{n}{2}$ and $F'_{n,i}$ intersects in a single point every $C'_a$ for $\frac{n}{2}<a\le n$; these intersection points on the Igusa components correspond to the $i$-th elliptic point of $X_0(M)$ associated to the $j$-invariant $j=0$. See \Cref{fig:fig5,fig:fig6} in Appendix \ref{sec:app}.
\end{description}

Hence, we have that:
\begin{itemize}
\item if $p\equiv 1 \pmod{12}$,
\[
\mathcal X'_p=C'_0+C'_n+\sum_{a=1}^{n-1}\phi(p^{\min(a,n-a)})\left(C'_a+\frac{1}{2}\sum_{i=1}^{\varepsilon_2(M)}E'_{a,i}+\frac{1}{3}\sum_{i=1}^{\varepsilon_3(M)}F'_{a,i}\right);
\]
\item if $p\equiv 5 \pmod{12}$ and $n$ is even,
\[
\mathcal X'_p=C'_0+C'_n+\sum_{a=1}^{n-1}\phi(p^{\min(a,n-a)})\left(C'_a+\frac{1}{2}\sum_{i=1}^{\varepsilon_2(M)}E'_{a,i}\right)+\frac{1}{3}\varepsilon_\infty(p^n)\sum_{i=1}^{\varepsilon_3(M)}F'_{\infty,i};
\]
\item if $p\equiv 5 \pmod{12}$ and $n$ is odd,
\[
\mathcal X'_p=C'_0+C'_n+\sum_{a=1}^{n-1}\phi(p^{\min(a,n-a)})\left(C'_a+\frac{1}{2}\sum_{i=1}^{\varepsilon_2(M)}E'_{a,i}\right)+\frac{1}{2}\varepsilon_\infty(p^n)\sum_{i=1}^{\varepsilon_3(M)}\left(F'_{0,i}+F'_{n,i}\right);
\]
\item if $p\equiv 7 \pmod{12}$,
\[
\mathcal X'_p=C'_0+C'_n+\sum_{a=1}^{n-1}\phi(p^{\min(a,n-a)})\left(C'_a+\frac{1}{3}\sum_{i=1}^{\varepsilon_3(M)}F'_{a,i}\right)+\frac{1}{2}\varepsilon_\infty(p^n)\sum_{i=1}^{\varepsilon_2(M)}E'_{\infty,i};
\]
\item if $p\equiv 11 \pmod{12}$ and $n$ is even,
\[
\mathcal X'_p=C'_0+C'_n+\sum_{a=1}^{n-1}\phi(p^{\min(a,n-a)})C'_a+\varepsilon_\infty(p^n)\left(\frac{1}{2}\sum_{i=1}^{\varepsilon_2(M)}E'_{\infty,i}+\frac{1}{3}\sum_{i=1}^{\varepsilon_3(M)}F'_{\infty,i}\right);
\]
\item if $p\equiv 11 \pmod{12}$ and $n$ is odd,
\[
\mathcal X'_p=C'_0+C'_n+\sum_{a=1}^{n-1}\phi(p^{\min(a,n-a)})C'_a+\frac{1}{2}\varepsilon_\infty(p^n)\left(\sum_{i=1}^{\varepsilon_2(M)}E'_{\infty,i}+\sum_{i=1}^{\varepsilon_3(M)}\left(F'_{0,i}+F'_{n,i}\right)\right).
\]
\end{itemize}
By \cite{Edi90}, we know the local equation of each component. Let $P$ be a point not corresponding to an elliptic point of $X_0(M)$, then the local equation of $C'_a$ near $P$ is
\[
\begin{cases}
x-y^{p^n}=0,&\text{if }a=0, \\
x^{p^n}-y=0,&\text{if }a=n, \\
(x^{p^{a-1}}-y^{p^{n-a-1}})^{p-1}=0,&\text{if }a\in\{1,\ldots,n-1\}.
\end{cases}
\]
Let $P$ be a point corresponding to an elliptic point over $j=1728$. If $a<\frac{n}{2}$, then the local equation near $P$ of $E'_{a,i}$ is $t=0$, for every $i\in\{1,\ldots,\varepsilon_2(M)\}$, and the local equation near $P$ of $C'_a$ is
\[
u-t^{\frac{p^{n-2a}-1}{2}}=0,
\]
where $t=y^2$ and $u=\frac{x}{y}$. If $a>\frac{n}{2}$, then the local equation near $P$ of $E'_{a,i}$ is $s=0$, for every $i\in\{1,\ldots,\varepsilon_2(M)\}$, and the local equation near $P$ of $C'_a$ is
\[
s^{\frac{p^{2a-n}-1}{2}}-v=0,
\]
where $s=x^2$ and $v=\frac{y}{x}$. Let $P$ be a point corresponding to an elliptic point over $j=0$. If $a<\frac{n}{2}$, then the local equation near $P$ of $F'_{a,i}$ is $t=0$, for every $i\in\{1,\ldots,\varepsilon_3(M)\}$, and the local equation near $P$ of $C'_a$ is
\[
u-t^{\frac{p^{n-2a}-1}{3}}=0,
\]
where $t=y^3$ and $u=\frac{x}{y}$. If $a>\frac{n}{2}$, then the local equation near $P$ of $F'_{a,i}$ is $s=0$, for every $i\in\{1,\ldots,\varepsilon_3(M)\}$, and the local equation near $P$ of $C'_a$ is
\[
s^{\frac{p^{2a-n}-1}{3}}-v=0,
\]
where $s=x^3$ and $v=\frac{y}{x}$. 

Recalling that if $A$ is a local algebra over a field $\kappa$ with maximal ideal $\mathfrak m$ and $B$ is a finitely generated $A$-module, we have that
\[
\mathrm{length}_A(B)\dim_\kappa (A/\mathfrak m)=\dim_\kappa(B).
\]
See for example \cite[Exercise~1.6 (c), Chapter 7]{Liu06}. Hence, in order to find the local intersection numbers we compute
\[
\dim_{\FF_p}A/(f_1,f_2),
\]
where $P$ is the point of intersection, $A=\mathcal O_{\mathcal X',P}$ and $f_1=0$ and $f_2=0$ are the local equations at $P$ of the two prime components considered. Then we sum these numbers for each intersection point of the two prime components. For instance, if $a<a'<n/2$ and $P$ is a supersingular point that is not an elliptic point, we have that
\begin{align*}
A=\mathcal O_{\mathcal X',P}=\FF_p[x,y]_{(x,y)},
\end{align*}
$C'_a$ has multiplicity $(p-1)p^{a-1}$ and local equation at $P$
\[
x-y^{p^{n-2a}}=0,
\]
$C'_{a'}$ has multiplicity $(p-1)p^{a'-1}$ and local equation at $P$
\[
x-y^{p^{n-2a'}}=0.
\]
Hence
\begin{align*}
A/(f_1,f_2)=\FF_p[x,y]_{(x,y)}/(x-y^{p^{n-2a}},x-y^{p^{n-2a'}}).
\end{align*}
Since in $A$ we have
\begin{align*}
\left(x-y^{p^{n-2a}},x-y^{p^{n-2a'}}\right)&=\left(y^{p^{n-2a'}}-y^{p^{n-2a}},x-y^{p^{n-2a'}}\right)=\left(y^{p^{n-2a'}}(1-y^{p^{2a'-2a}}),x-y^{p^{n-2a'}}\right)= \\
&=\left(y^{p^{n-2a'}},x-y^{p^{n-2a'}}\right)=\left(y^{p^{n-2a'}},x\right),
\end{align*}
then
\begin{align*}
\dim_{\FF_p}\FF_p[x,y]_{(x,y)}/(y^{p^{n-2a'}},x)=p^{n-2a'}.
\end{align*}
Finally, since we have $k$ supersingular points that are not elliptic points (see \Cref{eq:k}), we obtain $C'_a\cdot C'_{a'}=kp^{n-2a'}$ when $p\equiv 1\pmod{12}$. In a similar way we get all the intersection numbers: Let $\xi$ be defined as in \Cref{eq:xi}, let $k$ be defined as in \Cref{eq:k} and let
\begin{align*}
\mu(a,a')&:=\min(|n-2a|,|n-2a'|),
\end{align*}
then the intersection numbers among the components of $\mathcal X_p'$ are the following:
\begin{itemize}
\item for $a,a'\in\{0,\ldots,n\}$ with $a\ne a'$, we have if $(n-2a)(n-2a')> 0$ that
\[
C'_a\cdot C'_{a'}=kp^{\mu(a,a')}+\frac{1}{2}\xi(-1)\varepsilon_2(M)(p^{\mu(a,a')}-1)+\frac{1}{3}\xi(-3)\varepsilon_3(M)\left(p^{\mu(a,a')}-\frac{3-(-1)^n}{2}\right),
\]
and if $(n-2a)(n-2a')\le 0$ that
\[
C'_a\cdot C'_{a'}=k;
\]
\item for $a\in\{0,\ldots,n\}$, $a'\in\{1,\ldots,n-1,\infty\}$, $i\in\{1,\ldots,\varepsilon_2(M)\}$, we have
\[
C'_a\cdot E'_{a',i}=\begin{cases}
1, & \text{if }a'=a \text{ or }a'=\infty, \\
0, & \text{otherwise};
\end{cases}
\]
\item for $a\in\{0,\ldots,n\}$, $a'\in\{0,\ldots,n,\infty\}$, $i\in\{1,\ldots,\varepsilon_3(M)\}$, we have
\[
C'_a\cdot F'_{a',i}=\begin{cases}
1, &\begin{cases}
\text{if }a'=a \\
\text{or }a'=\infty \\
\text{or }a'=0 \text{ and }n-2a>0 \\
\text{or }a'=n \text{ and }n-2a<0,
\end{cases} \\
0, & \text{otherwise};
\end{cases}
\]
\item for $a,a'\in\{1,\ldots,n-1,\infty\}$, $i,i'\in\{1,\ldots,\varepsilon_2(M)\}$, with $(a,i)\ne (a',i')$, we have
\[
E'_{a,i}\cdot E'_{a',i'}=0;
\]
\item for $a\in\{1,\ldots,n-1,\infty\}$, $a'\in\{0,\ldots,n,\infty\}$, $i\in\{1,\ldots,\varepsilon_2(M)\}$, $i'\in\{1,\ldots,\varepsilon_3(M)\}$,
\[
E'_{a,i}\cdot F'_{a',i'}=0;
\]
\item for $a,a'\in\{0,\ldots,n,\infty\}$, $i,i'\in\{1,\ldots,\varepsilon_3(M)\}$, with $(a,i)\ne (a',i')$, we have
\[
F'_{a,i}\cdot F'_{a',i'}=\begin{cases}
1, & \text{if }(a,a')\in\{(0,n),(n,0)\}\text{ and } i=i',\\
0, & \text{otherwise}.
\end{cases}
\]
\end{itemize}
By \cite[Proposition~1.21 (a), Chapter 9]{Liu06}, for each prime divisor $C$ we have $C\cdot \mathcal X'_p=0$. Hence, from the previous intersection numbers we get the following self-intersection numbers:
\begin{align*}
(C'_a)^2&=\begin{cases}
-\frac{1}{12}d(M)p^{n-1}(p-1)-\frac{1}{2}\xi(-1)\varepsilon_2(M)-\frac{3-(-1)^n}{6}\xi(-3)\varepsilon_3(M), & \text{if }a\in\{0,n\}, \\
-\frac{1}{6}d(M)p^{|n-2a|}-\frac{1}{2}\varepsilon_2(M)-\frac{1}{3}\varepsilon_3(M)-\frac{1-(-1)^n}{6}\xi(-3)\varepsilon_3(M), & \text{if }a\in\{1,\ldots,n-1\},
\end{cases} \\
(E'_{a,i})^2&=-2, \quad \text{for }a\in\{1,\ldots,n-1,\infty\},\,\,i\in\{1,\ldots,\varepsilon_2(M)\}, \\
(F'_{a,i})^2&=\begin{cases}
-2, & \text{if }a\in\{0,n\}, \\
-3, & \text{if }a\in\{1,\ldots,n-1,\infty\},
\end{cases} \quad \text{for }i\in\{1,\ldots,\varepsilon_3(M)\}.
\end{align*}
 For the reader's convenience we show how to obtain for instance $(E'_{a,i})^2$, for $p\equiv 1 \pmod{12}$:
\begin{align*}
&E'_{a,i}\cdot \mathcal X'_p=0, \\
&E'_{a,i}\cdot\left[ C'_0+C'_n+\sum_{a'=1}^{n-1}\phi(p^{\min(a',n-a')})\left(C'_a+\frac{1}{2}\sum_{i'=1}^{\varepsilon_2(M)}E'_{a',i'}+\frac{1}{3}\sum_{i'=1}^{\varepsilon_3(M)}F'_{a',i'}\right)\right]=0, \\
&E'_{a,i}\cdot C'_0+E'_{a,i}\cdot C'_n+\sum_{a'=1}^{n-1}\phi(p^{\min(a',n-a')})\left(E'_{a,i}\cdot C'_{a'}+\frac{1}{2}\sum_{i'=1}^{\varepsilon_2(M)}E'_{a,i}\cdot E'_{a',i'}+\frac{1}{3}\sum_{i'=1}^{\varepsilon_3(M)}E'_{a,i}\cdot F'_{a',i'}\right)=0,\\
&0+0+\phi(p^{\min(a,n-a)})\left(1+\frac{1}{2}(E'_{a,i})^2+\frac{1}{3}\cdot 0\right)=0, \\
&(E'_{a,i})^2=-2.
\end{align*}
\begin{remark}\label{rem:Cn/2}
When $n$ is even, we observe that
\[
(C_{n/2}')^2=-\frac{d(M)+3\varepsilon_2(M)+2\varepsilon_3(M)}{6},
\]
hence it does not depend on $p^n$, but only on $M$.
\end{remark}

\subsection{Non-minimal Edixhoven's models}\label{sec:non_min-edi}
 We recall (see for instance \cite[Chapter 9.3]{Liu06}) that the arithmetic surface $\mathcal X'$ is said minimal if it doesn't contain exceptional divisors, i.e., it doesn't contain any vertical prime divisor (lying over a prime $\ell$) such that:
\begin{itemize}
\item $C\cong\mathbb P^1(\kappa)$, where $\kappa=H^0(C,\mathcal O_C)$;
\item $C^2=-[\kappa:\mathbb F_\ell]$.
\end{itemize}
Thus, the exceptional divisors of $\mathcal X'$, if any, are supported in the special fibres of $\mathcal X'$. Let $C$ be any component of the special fibre $\mathcal X'_p$. By \cite[Tag 0366]{stacks}, in order to show that $H^0(C,\mathcal O_C)=\mathbb F_p$ it is enough to verify that $C$ is geometrically reduced and geometrically connected. By \cite{Edi90}, we know that the base change of $C$ to $\overline {\mathbb F}_p$ is isomorphic to $\mathbb P^1(\overline{\mathbb F}_p)$, which is indeed reduced and connected. It follows that, in order to locate the exceptional divisors, we have to look exclusively at the self-intersections computed at the end of \Cref{sec:Edi}. They are always different from $-1$ except for:
\begin{itemize}
\item  $n=M=1$ and $p\in\{5,7,13\}$, in these cases $(C_0')^2=(C_1')^2=-1$;
\item  $M=1$ and $n$ even (it follows by \Cref{rem:Cn/2}), in these cases $(C_{n/2}')^2=-1$.
\end{itemize}
So in all these cases the Edixhoven's model is not minimal. 
Since we are interested in the asymptotic behaviour of the intersection numbers, we don't discuss the \emph{ad hoc} blow downs necessary when $N\in\{5,7,13\}$ and for simplicity we make the following assumption:
\begin{center}
\framebox{From now on and until the end of \Cref{sec:minmod} we fix $N>1$, $(N,6)=1$ and $N\notin\{ 5,7,13\}$.}
\end{center}

\noindent If $M=1$ and $n$ is even, we need three blow downs to get the minimal regular model. We denote the composition of these three blow downs by $\pi\colon\mathcal X'\to \mathcal X$. If $\mathcal X'$ is already minimal, $\pi$ is just the identity map. Let $C$ be a prime divisor of $\mathcal X$, let $\pi^*(C)$ be the pullback of $C$, let $C'$ be the corresponding divisor of $C$ in $\mathcal X'$ and, for $i=1,\ldots,c$, let $D_i$ be the $c$ prime divisors contracted by $\pi$. Repeatedly applying \cite[Proposition~2.23, Chapter 9]{Liu06}, we get\[
\pi^*(C)=C'+\sum_{i=1}^c d_iD_i,
\]
with $d_1,\ldots,d_c\in\ZZ$. So, we can compute the $d_i$'s using the fact that $D_i\cdot \pi^*(C)=0$, for every $i=1,\ldots,c$, see \cite[Theorem~2.12 (a), Chapter 9]{Liu06}. Now we list the results of the pullbacks when $M=1$ and $n$ is even. Since $d(1)=\varepsilon_2(1)=\varepsilon_3(1)=1$ (see \Cref{rem:genus} and \Cref{eq:k} for the notation), we have:
\begin{description}
\item[Case $p\equiv 1\pmod{12}$.] We have to contract $C'_{n/2}$, $E'_{n/2,1}$ and $F'_{n/2,1}$ obtaining
\begin{align*}
\pi^*(C_a)&=C'_a+6kC'_{n/2}+3kE'_{n/2,1}+2kF'_{n/2,1},\quad \text{for }a\ne n/2, \\
\pi^*(E_{a,1})&=E'_{a,1},\quad \text{for }a\ne n/2, \\
\pi^*(F_{a,1})&=F'_{a,1},\quad \text{for }a\ne n/2.
\end{align*}
 \item[Case $p\equiv 5\pmod{12}$.] We have to contract $C'_{n/2}$, $E'_{n/2,1}$ and $F_\infty'$ obtaining
\begin{align*}
\pi^*(C_a)&=C'_a+(6k+2)C'_{n/2}+(3k+1)E'_{n/2,1}+(2k+1)F'_{\infty,1},\quad \text{for }a\ne n/2, \\
\pi^*(E_{a,1})&=E'_{a,1},\quad \text{for }a\ne n/2.
\end{align*}
\item[Case $p\equiv 7\pmod{12}$.] We have to contract $C'_{n/2}$, $E'_{\infty,1}$ and $F'_{n/2,1}$ obtaining
\begin{align*}
\pi^*(C_a)&=C'_a+(6k+3)C'_{n/2}+(3k+2)E'_{\infty,1}+(2k+1)F'_{n/2,1},\quad \text{for }a\ne n/2, \\
\pi^*(F_{a,1})&=F'_{a,1},\quad \text{for }a\ne n/2.
\end{align*}
\item[Case $p\equiv 11\pmod{12}$.] We have to contract $C'_{n/2}$, $E'_{\infty,1}$ and $F'_{\infty,1}$ obtaining
\begin{align*}
\pi^*(C_a)&=C'_a+(6k+5)C'_{n/2}+(3k+3)E'_{\infty,1}+(2k+2)F'_{\infty,1},\quad \text{for }a\ne n/2.
\end{align*}
\end{description}

\subsection{Minimal regular models and intersection matrices}\label{sec:min}

Here we describe the minimal regular model for $X_0(N)$ with $N=p^n M>1$, coprime to 6 and $N\notin\{5,7,13\}$. As explained in \Cref{sec:non_min-edi} above, if either $M>1$ or $n$ is odd, the Edixhoven's model is already a minimal regular model. In every case we use the same notation as \Cref{sec:Edi} for model and components just dropping the dash when we refer to the minimal regular model. In the cases where blow downs are necessary, $N=p^n$ and the only fiber that change is $\mathcal X_p$, hence we write it below for convenience. Since $C\cdot D=\pi^*(C)\cdot\pi^*(D)$ for every prime divisor $C$ and $D$ (see \cite[Theorem~2.12 (c), Chapter 9]{Liu06}), using \Cref{sec:non_min-edi} we compute all the intersection numbers of the components of $\mathcal X_p$. Finally, we compute the self-intersection numbers, as in \Cref{sec:Edi} above, using the intersection numbers and the fact that for each prime divisor $C$ we have $C\cdot \mathcal X_p=0$ (\cite[Proposition~1.21 (a), Chapter~9]{Liu06}). Hence we get:
\allowdisplaybreaks
\begin{align*}
\mathcal X_p&=\sum_{\substack{a=0\\a\ne n/2}}^{n}\phi(p^{\min(a,n-a)})C_a+\sum_{\substack{a=1\\a\ne n/2}}^{n-1}\phi(p^{\min(a,n-a)})\left(\frac{1}{2}\xi(-3)E_{a,1}+\frac{1}{3}\xi(-1)F_{a,1}\right), \\
C_a\cdot C_{a'}&=C'_a\cdot C'_{a'}+6k^2+(6k+2)\xi(-1)+(4k+1)\xi(-3)+2\xi(-1)\xi(-3),\quad\text{for }a,a'\in\{0,\ldots,n\}-\{n/2\}, \\
C_a\cdot E_{a',1}&=\begin{cases}
1, & \text{if }a=a', \\
0, & \text{if }a\ne a',
\end{cases}\quad \text{for }a\in\{0,\ldots,n\}-\{n/2\},\,\,a'\in\{1,\ldots,n-1\}-\{n/2\}, \\
C_a\cdot F_{a',1}&=\begin{cases}
1, &\text{if }a=a', \\
0, & \text{if }a\ne a',
\end{cases}\quad \text{for }a\in\{0,\ldots,n\}-\{n/2\},\,\,a'\in\{1,\ldots,n-1\}-\{n/2\}, \\
E_{a,1}\cdot E_{a',1}&=\begin{cases}
-2, &\text{if }a=a', \\
0, & \text{if }a\ne a',
\end{cases} \quad \text{for }a,a'\in\{1,\ldots,n-1\}-\{n/2\}, \\
E_{a,1}\cdot F_{a',1}&=0, \quad \text{for }a,a'\in\{1,\ldots,n-1\}-\{n/2\}, \\
F_{a,1}\cdot F_{a',1}&=\begin{cases}
-3, &\text{if }a=a', \\
0, & \text{if }a\ne a',
\end{cases}\quad\text{for }a,a'\in\{1,\ldots,n-1\}-\{n/2\},
\end{align*}
\allowdisplaybreaks[0]
here $\phi$ is the Euler’s totient function, $\xi$ is defined in \Cref{eq:xi} and $k$ is defined in \Cref{eq:k}.

\section{Asymptotics for the self-intersection of $\overline\omega$}\label{sec:main_res}

The proof of \Cref{thm:main_res} is performed in several steps. We continue with the same notation as the previous sections, so $f\colon \mathcal X\to\operatorname{Spec} \mathbb Z$ is the minimal regular model of $X_0(N)$. The genus of $X_0(N)$ is $g$ and for each prime $\ell\nmid N$ the fiber $\mathcal X_\ell$ is irreducible with genus $g$. It is well known that the genus of $X_0(N)$ is $0$ if and only if $N\in\{1,2,3,4,5,6,7,8,9,10,12,13,16,18,25\}$, so in these cases the canonical K\"ahler form $\Omega^{\text{can}}$ is not well defined, therefore we make the following assumption:

\begin{center}
\framebox{From now on  we fix $N>1$, $(N,6)=1$ and $N\notin\{ 5,7,13,25\}$.}
\end{center}

\noindent In  \Cref{sub:aformula} we prove the following formula for the self-intersection of the canonical Arakelov divisor:
\begin{equation}\label{eq:mainformula}
\langle\overline{\omega}, \overline{\omega}\rangle=\underbrace{-4g(g-1)\langle H_0, H_\infty\rangle}_{(a)}\underbrace{+\frac{g\langle V_0, V_\infty\rangle}{g-1}-\frac{\langle V_0, V_0\rangle+\langle V_\infty, V_\infty\rangle}{2g-2}}_{(b)}\underbrace{+\frac{h_0+h_\infty}{2}}_{(c)},
\end{equation}
where $H_0$ and $H_\infty$ are the horizontal divisors on $\mathcal X$ induced by the cusps $0$ and $\infty$ of $X_0(N)$, $V_0$ and $V_\infty$ are two carefully chosen vertical divisors supported over the primes dividing $N$, $h_0$ and $h_\infty$ are certain Néron-Tate's heights of some points in the Jacobian  of $X_0(N)$. We stress that the crucial point is the \emph{constructive proof} of the existence of $V_0$ and $V_\infty$ (see \Cref{prop:V_i}). In fact assume that $\mathcal P$ is the set of primes dividing $N$ and that $V^{(p)}_m$ is the part of $V_m$ supported over $p$, for $p\in\mathcal P$ and $m\in\{0,\infty\}$, then  the vectors made of the  (rational) multiplicities of the components of $V^{(p)}_0$ and $V^{(p)}_\infty$ are the solutions of two linear systems. These systems are described in \Cref{the_system} in \Cref{sub:aformula} and solved in \Cref{prop:general} in \Cref{sec:comp}. The last step of the proof consists in computing the asymptotics for $N\to+\infty$ for all summands of \Cref{eq:mainformula}. The estimates of the pieces $(a)$ and $(c)$ of the formula are already known in the literature, so in \Cref{sec:concl} we \emph{exactly compute} $(b)$ and we study its asymptotics. Finally,  we put all together to conclude the proof.

\subsection{A formula for $\langle\overline{\omega},\overline{\omega}\rangle$}\label{sub:aformula}

The results of this section hold for every regular model (not necessarily minimal), but for simplicity we refer to the minimal regular model $\mathcal X$. We denote by $H_0$ and $H_\infty$ the closures in $\mathcal X$ of the two cusps $0$ and $\infty$. By \cite[Proposition~1.30, Chapter~9]{Liu06}, we know that $ H_m\cdot \mathcal X_q=1$, for $m\in\{0,\infty\}$, and every prime $q$. Moreover, by the components labelling of \cite[page 296]{KM85}, we can assume that $H_0$  and $H_\infty$ respectively meet transversally the components $C_0$ and $C_n$ of each special fiber. We now define two particular divisors $G_0$ and $G_\infty$ and we describe some their properties that are important to write an explicit formula for the self-intersection. Let $\mathcal K$ be a canonical divisor of $\mathcal X$, then we define:
\begin{equation}\label{eq:Gm}
G_m:=\mathcal K-(2g-2)H_m,\quad \text{for $m\in\{0,\infty\}$}.
\end{equation}

\begin{proposition}\label{prop:deg0}
Let $F$ be a  vertical divisor  of $\mathcal X$ which is not supported over the primes diving $N$, then
\[
 G_m\cdot F=0, \quad \text{for  $m\in\{0,\infty\}$}.
\]
\end{proposition}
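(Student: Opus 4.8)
The plan is to reduce the statement to a computation on a single fibre over a prime $\ell\nmid N$ and then invoke the adjunction formula. Since $F$ is vertical and not supported over the primes dividing $N$, it is supported over primes $\ell\nmid N$. For each such $\ell$ the fibre $\mathcal X_\ell$ is smooth and irreducible, hence reduced, so it is itself a prime divisor occurring with multiplicity one in its fibre. Consequently the only vertical prime divisors over such an $\ell$ are the fibres $\mathcal X_\ell$ themselves, and $F$ is a finite $\ZZ$-linear combination $F=\sum_\ell n_\ell\,\mathcal X_\ell$. By bilinearity of the intersection pairing it therefore suffices to prove $G_m\cdot\mathcal X_\ell=0$ for every prime $\ell\nmid N$.

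Expanding the definition $G_m=\mathcal K-(2g-2)H_m$ from \Cref{eq:Gm}, I would write
\[
G_m\cdot\mathcal X_\ell=\mathcal K\cdot\mathcal X_\ell-(2g-2)\,(H_m\cdot\mathcal X_\ell).
\]
The second intersection number is $H_m\cdot\mathcal X_\ell=1$, which is already recorded above (from \cite[Proposition~1.30, Chapter~9]{Liu06}). Thus everything comes down to showing $\mathcal K\cdot\mathcal X_\ell=2g-2$.

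For this I would apply the adjunction formula to the prime vertical divisor $\mathcal X_\ell$, namely $\mathcal X_\ell^2+\mathcal K\cdot\mathcal X_\ell=2p_a(\mathcal X_\ell)-2$. Since $\mathcal X_\ell$ is irreducible it coincides with its whole fibre, so $\mathcal X_\ell^2=\mathcal X_\ell\cdot\mathcal X_\ell=0$ by \cite[Proposition~1.21 (a), Chapter~9]{Liu06} (the same fact used repeatedly to extract the self-intersections in \Cref{sec:minmod}). Moreover $\mathcal X_\ell$ is a smooth curve of genus $g$, so its arithmetic genus equals $g$, and adjunction gives $\mathcal K\cdot\mathcal X_\ell=2g-2$. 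Substituting back yields $G_m\cdot\mathcal X_\ell=(2g-2)-(2g-2)\cdot 1=0$, and summing over $\ell$ with the coefficients $n_\ell$ completes the proof.

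The argument is essentially routine; the only point requiring a little care is the reduction step, where one must use that smoothness of $\mathcal X_\ell$ forces the fibre to be reduced and irreducible, so that $F$ genuinely decomposes into full fibres $\mathcal X_\ell$ rather than proper subcomponents. Once this is granted, the self-intersection vanishing $\mathcal X_\ell^2=0$, the adjunction computation, and the normalization $H_m\cdot\mathcal X_\ell=1$ conclude the argument immediately.
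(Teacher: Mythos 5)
Your argument is correct, and its skeleton is the same as the paper's: write $F=\sum_{\ell\nmid N} n_\ell\,\mathcal X_\ell$ as an integral combination of full fibres (legitimate because each fibre over $\ell\nmid N$ is smooth and irreducible, hence integral, so it is the unique vertical prime divisor over $\ell$), use $H_m\cdot\mathcal X_\ell=1$, and reduce everything to the identity $\mathcal K\cdot\mathcal X_\ell=2g-2$. The one point where you diverge is the justification of that identity: the paper simply quotes \cite[Proposition~1.35, Chapter~9]{Liu06}, which asserts directly that the canonical divisor has degree $2g-2$ on every fibre, whereas you re-derive it from the adjunction formula $\mathcal X_\ell^2+\mathcal K\cdot\mathcal X_\ell=2g_{\mathcal X_\ell}-2$ together with the vanishing $\mathcal X_\ell^2=0$ of the self-intersection of a full fibre (\cite[Proposition~1.21 (a), Chapter~9]{Liu06}) and the fact that the smooth fibre has arithmetic genus $g$. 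Both justifications are sound; yours has the mild advantage of reusing exactly the two facts the paper already invokes repeatedly elsewhere (the adjunction formula of \Cref{eq:CK} and Proposition~1.21(a) for self-intersections), at the cost of the extra observations that smoothness forces the fibre to be reduced---which is what guarantees both the decomposition of $F$ into whole fibres with integer coefficients and the applicability of $\mathcal X_\ell^2=0$---and that $g_{\mathcal X_\ell}=g$; the paper's citation is shorter but less self-contained. There is no gap in your proof.
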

\begin{proof}
Since the primes dividing $N$ are the only primes whose fiber can contain more than one component, we write $F=\sum_{\ell\nmid N} n_\ell\mathcal X_\ell$, where $n_\ell\in\mathbb Z$. By  \cite[Proposition~1.35, Chapter~9]{Liu06}, we have $\mathcal K\cdot\mathcal X_\ell=2g-2$. Moreover, by \cite[Proposition~1.30, Chapter~9]{Liu06}, we have $H_m\cdot \mathcal X_\ell=1$.
\end{proof}

\begin{definition}
A divisor $D$ of $\mathcal X$ is called \emph{$f$-numerically trivial} if $D\cdot F=0$ for every vertical divisor $F$.
\end{definition}
In the following proposition we explain how we can modify the divisors $G_m$ in order to get $f$-numerically trivial $\mathbb Q$-divisors. It is a special case of  \cite[Lemma~2.2.2]{Mor13}, but we prefer write a full proof since it is constructive and it is crucial in order to carry out the computations of \Cref{sec:comp}.

\begin{proposition}\label{prop:V_i}
There are two vertical $\mathbb Q$-divisors $V_0$ and $V_\infty$ supported over the primes dividing $N$ such that 
\[
D_m:=G_m+V_m,\quad \text{for }m\in\{0,\infty\},
\]
is $f$-numerically trivial.
\end{proposition}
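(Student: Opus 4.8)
The plan is to reduce the assertion to a single linear-algebra problem on each fibre over a prime dividing $N$, and to solve that problem using the negative semidefiniteness of the fibral intersection matrix (Zariski's lemma). First I would observe that it suffices to arrange, for every prime $p\mid N$ and every prime component $\Gamma$ of $\mathcal X_p$, that $D_m\cdot\Gamma=0$. Indeed, any vertical divisor $F$ decomposes as $F=\sum_{\ell\nmid N}n_\ell\mathcal X_\ell+\sum_{p\mid N}(\text{components of }\mathcal X_p)$; over the primes $\ell\nmid N$ the fibre is irreducible and $G_m\cdot\mathcal X_\ell=0$ by \Cref{prop:deg0} (while $V_m$ contributes nothing there), so the only conditions left to impose are those over the $p\mid N$. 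These moreover decouple prime by prime, since a $\mathbb Q$-divisor $V_m^{(p)}$ supported on $\mathcal X_p$ meets the components of $\mathcal X_{p'}$ trivially for $p'\ne p$.

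So I would fix $p\mid N$, list the distinct components $\Gamma_1,\dots,\Gamma_r$ of $\mathcal X_p$ with multiplicities $d_1,\dots,d_r$ (so that $\mathcal X_p=\sum_j d_j\Gamma_j$), and seek a $\mathbb Q$-divisor $V_m^{(p)}=\sum_j v_j\Gamma_j$ whose coefficients solve the linear system
\[
\sum_{j=1}^r v_j\,(\Gamma_i\cdot\Gamma_j)=-\,G_m\cdot\Gamma_i,\qquad i=1,\dots,r.
\]
The coefficient matrix is precisely the symmetric, integral intersection matrix $M_p=(\Gamma_i\cdot\Gamma_j)_{i,j}$ computed in \Cref{sec:min}, and the right-hand side $b=(-\,G_m\cdot\Gamma_i)_i$ is an integral vector. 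I would then invoke Zariski's lemma (\cite[Lemma~2.2.1]{Mor13}): since $\mathcal X_p$ is connected, $M_p$ is negative semidefinite with one-dimensional radical spanned by the multiplicity vector $d=(d_1,\dots,d_r)$. As $M_p$ is symmetric, its image equals the orthogonal complement of its kernel, so $M_p v=b$ is solvable over $\mathbb Q$ exactly when $b\perp d$, i.e. when $\sum_i d_i(G_m\cdot\Gamma_i)=G_m\cdot\mathcal X_p=0$. This compatibility condition holds because
\[
G_m\cdot\mathcal X_p=\mathcal K\cdot\mathcal X_p-(2g-2)\,H_m\cdot\mathcal X_p=(2g-2)-(2g-2)\cdot 1=0,
\]
using $\mathcal K\cdot\mathcal X_p=2g-2$ (\cite[Proposition~1.35, Chapter~9]{Liu06}) and $H_m\cdot\mathcal X_p=1$ (\cite[Proposition~1.30, Chapter~9]{Liu06}). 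Hence a rational solution $v^{(p)}$ exists; I set $V_m^{(p)}:=\sum_j v_j^{(p)}\Gamma_j$ and $V_m:=\sum_{p\mid N}V_m^{(p)}$, a vertical $\mathbb Q$-divisor supported over the primes dividing $N$. By construction $D_m\cdot\Gamma=0$ for every component $\Gamma$ of every $\mathcal X_p$, which together with the first paragraph yields $D_m\cdot F=0$ for all vertical $F$, i.e. $f$-numerical triviality.

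The only genuinely nontrivial input is Zariski's lemma, which identifies the radical of $M_p$ and thereby collapses solvability to the single scalar condition $G_m\cdot\mathcal X_p=0$; everything else is the degree bookkeeping $\mathcal K\cdot\mathcal X_p=(2g-2)=(2g-2)\,H_m\cdot\mathcal X_p$ and the decoupling over primes. The point to be careful about — and the reason to keep the argument \emph{constructive} rather than merely quoting \cite[Lemma~2.2.2]{Mor13} — is that \Cref{sec:comp} requires the explicit coefficients $v_j^{(p)}$, not just their existence; the argument above exhibits them as the solution of an explicit rational system built from the matrices of \Cref{sec:min}, which is exactly what makes the subsequent computation of the term $(b)$ feasible.
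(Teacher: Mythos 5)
Your proof is correct and follows essentially the same route as the paper's: the same reduction of $f$-numerical triviality to the components of the fibres over $p\mid N$, the same linear system as in \Cref{the_system}, and the same appeal to Zariski's lemma \cite[Lemma~2.2.1]{Mor13} to identify the kernel of the intersection matrix with the span of the multiplicity vector and thereby reduce solvability to the single condition $G_m\cdot\mathcal X_p=0$. The only cosmetic difference is that you check this condition by computing $\mathcal K\cdot\mathcal X_p=2g-2$ and $H_m\cdot\mathcal X_p=1$ directly, whereas the paper obtains it as $G_m\cdot\mathcal X_p=\deg_{\mathbb Q}G_{m,\mathbb Q}=0$ via \cite[Remark~1.31, Chapter~9]{Liu06} --- the same fact in different form.
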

\begin{proof}
We explain how to effectively construct infinitely many $V_0$ and $V_\infty$ that satisfy the proposition. Let $\mathcal P$ be the set of primes dividing $N$ and let $\nu_p$ be the number of distinct components of $X_{p}$. For simplicity of presentation we relabel all the components and the relative multiplicities so that we can write:
\[
\mathcal X_{p}=\sum^{\nu_p}_{i=1} m^{(p)}_i\Gamma^{(p)}_i,\quad \text{ for } p\in\mathcal P.
\]
Then by using \cite[Remark 1.31, Chapter 9]{Liu06} we get:
\[
\sum^{\nu_p}_{i=1} m^{(p)}_i(G_m\cdot\Gamma^{(p)}_i)= G_m\cdot\mathcal X_{p}=\deg_{\mathbb Q}G_{m,\mathbb Q}=0,
\]
where $G_{m,\mathbb Q}$ is the restriction of $G_m$ on the generic fibre. By \cite[Lemma~2.2.1]{Mor13}, it follows that the vector $(G_m\cdot\Gamma^{(p)}_1,\ldots, G_m\cdot\Gamma^{(p)}_{\nu_p})\in\mathbb Q^{\nu_p}$ lies in the image of the linear function induced by the matrix $(\Gamma^{(p)}_i\cdot\Gamma^{(p)}_j)_{ij}$. So we can find a solution  for the linear system:
\begin{equation}\label{the_system}
\begin{pmatrix}
\Gamma^{(p)}_1\cdot \Gamma^{(p)}_1 & \dots &  \Gamma^{(p)}_1\cdot \Gamma^{(p)}_{\nu_p}\\
\vdots    &  & \vdots\\
\Gamma^{(p)}_{\nu_p}\cdot \Gamma^{(p)}_1 & \dots & \Gamma^{(p)}_{\nu_p}\cdot \Gamma^{(p)}_{\nu_p}
\end{pmatrix}
\begin{pmatrix}
x^{(p)}_1 \\ \vdots \\ x^{(p)}_{\nu_p} 
\end{pmatrix}
=
\begin{pmatrix}
-G_m\cdot \Gamma^{(p)}_1 \\ \vdots \\ -G_m\cdot \Gamma^{(p)}_{\nu_p} 
\end{pmatrix}.
\end{equation}
Actually \cite[Lemma~2.2.1]{Mor13} shows that the dimension of the affine subspace of the solutions of system (\ref{the_system}) is one and a basis of its direction is given by $(m^{(p)}_1,\ldots,m^{(p)}_{\nu_p})$. If $(x^{(p)}_1,\ldots x^{(p)}_{\nu_p})$ is any solution, we put:
\begin{equation}\label{eq:Vm}
V^{(p)}_m:=\sum^{\nu_p}_{i=1} x^{(p)}_i\Gamma^{(p)}_i,\quad \text{for } m\in\{0,\infty\}.
\end{equation}
We define
\[
V_m:=\sum_{p\in\mathcal P} V^{(p)}_m,\quad \text{for } m\in\{0,\infty\}.
\]
By \Cref{prop:deg0} and the fact that $V_m$ is supported over $\mathcal P$, it follows that $(G_m+V_m)\cdot \mathcal X_\ell=0$, for every $\ell\nmid N$. Moreover, by the construction of the vector $(x^{(p)}_1,\ldots, x^{(p)}_{\nu_p})$ for $p\in\mathcal P$, we have that $(G_m+V_m)\cdot F=0$ for every divisor $F$ supported over~$\mathcal P$.
\end{proof}
From now on we fix a choice of $V_0$ and $V_\infty$ that satisfy   \Cref{prop:V_i}, so the divisors $D_0$ and $D_\infty$ are fixed as well. We recall that $\langle\cdot,\cdot\rangle$ denotes the Arakelov intersection pairing. If one of the two Arakelov divisors is an ordinary vertical divisor supported over a prime $p$, then their Arakelov intersection differs from the usual intersection pairing (denoted by a dot) by a $\log p$ factor. We define the numbers:
\[
h_m:=\langle D_m, D_m\rangle, \quad\text{for $m\in\{0,\infty\}$}.
\]
In the next proposition we show how the self-intersection of the Arakelov canonical divisor can be expressed in terms of  $h_m$ and some intersection numbers involving only $H_m$ and $V_m$.

\begin{proposition}\label{prop:omega2}
With the notations fixed above and for any $V_0,V_\infty$ satisfying  \Cref{prop:V_i} we have that:
\[
\langle\overline{\omega}, \overline{\omega}\rangle=-4g(g-1)\langle H_0, H_\infty\rangle+\frac{g\langle V_0, V_\infty\rangle}{g-1}-\frac{\langle V_0, V_0\rangle+\langle V_\infty, V_\infty\rangle}{2g-2}+\frac{h_0+h_\infty}{2}.
\]
\end{proposition}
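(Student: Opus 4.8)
The plan is to express $\langle\overline{\omega},\overline{\omega}\rangle=\langle\mathcal K,\mathcal K\rangle$ (an equality already recorded in \Cref{sec:arakelov}) in terms of the quantities $h_m$ together with the Arakelov intersections of the $H_m$ and $V_m$, by exploiting two inputs: the arithmetic adjunction formula for the sections $H_m$, and the vanishing of a Néron–Tate height forced by the Manin–Drinfeld theorem. Throughout I abbreviate $a:=2g-2$ and use the relation
\[
\mathcal K=D_m+aH_m-V_m,
\]
which follows from \eqref{eq:Gm} and $D_m=G_m+V_m$.

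First I would put each $h_m=\langle D_m,D_m\rangle$ in closed form. Expanding $\langle D_m,D_m\rangle$ with $\mathcal K=D_m+aH_m-V_m$ and using repeatedly that $D_m$ is $f$-numerically trivial (\Cref{prop:V_i}), so that $\langle D_m,V_m\rangle=0$ and hence $\langle G_m,V_m\rangle=\langle D_m-V_m,V_m\rangle=-\langle V_m,V_m\rangle$, one reaches
\[
h_m=\langle\mathcal K,\mathcal K\rangle-2a\langle\mathcal K,H_m\rangle+a^2\langle H_m,H_m\rangle-\langle V_m,V_m\rangle .
\]
At this point I invoke the arithmetic adjunction formula for the section $H_m$ (see \cite[Section~4.5]{Mor14}), namely $\langle\mathcal K,H_m\rangle=-\langle H_m,H_m\rangle$, which collapses the two middle terms into $(a^2+2a)\langle H_m,H_m\rangle=4g(g-1)\langle H_m,H_m\rangle$, giving
\[
h_m=\langle\mathcal K,\mathcal K\rangle+4g(g-1)\langle H_m,H_m\rangle-\langle V_m,V_m\rangle .
\]

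The second ingredient trades the self-intersections $\langle H_m,H_m\rangle$, which do not appear in the target formula, for the cross term $\langle H_0,H_\infty\rangle$ and the $V$-intersections. Subtracting the two expressions for $\mathcal K$ gives the key identity $a(H_0-H_\infty)=(D_\infty-D_0)+(V_0-V_\infty)$. Here $D_\infty-D_0$ is $f$-numerically trivial and $V_0-V_\infty$ is vertical, so the two summands are orthogonal for the Arakelov pairing; moreover the generic fibre of $D_\infty-D_0$ is $a\bigl((0)-(\infty)\bigr)$, a multiple of a difference of cusps, which is torsion in the Jacobian by the Manin–Drinfeld theorem. The Faltings–Hriljac form of the Hodge index theorem (see \cite{Fal84}) then forces $\langle D_\infty-D_0,D_\infty-D_0\rangle=0$, since the Néron–Tate height of a torsion class vanishes. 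Taking Arakelov self-intersections of both sides of the identity therefore yields
\[
a^2\langle H_0-H_\infty,H_0-H_\infty\rangle=\langle V_0-V_\infty,V_0-V_\infty\rangle .
\]

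Finally I would combine the pieces. Summing the formula for $h_0$ and $h_\infty$ expresses $h_0+h_\infty$ through $\langle\mathcal K,\mathcal K\rangle$, the quantity $\langle H_0,H_0\rangle+\langle H_\infty,H_\infty\rangle$, and $\langle V_0,V_0\rangle+\langle V_\infty,V_\infty\rangle$; using the last displayed identity to rewrite $\langle H_0,H_0\rangle+\langle H_\infty,H_\infty\rangle=2\langle H_0,H_\infty\rangle+a^{-2}\bigl(\langle V_0,V_0\rangle-2\langle V_0,V_\infty\rangle+\langle V_\infty,V_\infty\rangle\bigr)$ and solving for $\langle\mathcal K,\mathcal K\rangle$ reproduces the stated formula, once one checks $\tfrac{4g(g-1)}{a^2}=\tfrac{g}{g-1}$, whence the coefficients $\tfrac{g}{g-1}$ and $-\tfrac{1}{2g-2}$ emerge exactly. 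I expect the only genuinely delicate step to be the correct invocation of Faltings–Hriljac and Manin–Drinfeld, that is, verifying that $D_\infty-D_0$ has degree zero on the generic fibre and is orthogonal to every vertical divisor, so that its self-intersection really equals (a constant times) the height of a torsion point and thus vanishes; the remaining manipulations are routine bilinear algebra.
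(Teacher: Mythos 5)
Your proposal is correct and follows essentially the same route as the paper: you expand $h_m=\langle D_m,D_m\rangle$ using the orthogonality $\langle D_m,V_m\rangle=0$ from \Cref{prop:V_i} and the Arakelov adjunction formula $\langle\mathcal K,H_m\rangle=-\langle H_m,H_m\rangle$, then kill $\langle D_\infty-D_0,D_\infty-D_0\rangle$ via Manin--Drinfeld together with Faltings' Hodge index theorem to trade $\langle H_0,H_0\rangle+\langle H_\infty,H_\infty\rangle$ for $2\langle H_0,H_\infty\rangle$ plus the $V$-terms, exactly as in the paper's Equations (\ref{eq:KK1})--(\ref{eq:KK5}). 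The only differences are presentational (you expand $\langle D_m,D_m\rangle$ directly and make explicit the orthogonality of the $f$-numerically trivial part against the vertical part, where the paper writes $h_m=\langle D_m,D_m-V_m\rangle$ and leaves that cross-term cancellation implicit), and your coefficient bookkeeping checks out.
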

\begin{proof}
As explained in \Cref{sec:arakelov} we know that $\langle\overline{\omega}, \overline{\omega}\rangle=\langle \mathcal K, \mathcal K\rangle$. By \cite[Theorem 4(c)]{Fal84}, we have that $h_m=-2\widehat h(D_{m,\mathbb Q})$ where  $D_{m,\mathbb Q}$ can be seen as a point in the Jacobian of $X_0(N)$ and $\widehat{h}$ is the Néron-Tate's height. But $\langle D_m, V_m\rangle=0$ by \Cref{prop:V_i}, so:
 \begin{equation}\label{eq:KK1}
 h_m=\langle D_m, D_m-V_m\rangle=\langle\mathcal K-(2g-2)H_m+V_m, \mathcal K-(2g-2)H_m \rangle.
 \end{equation}
 Expanding \Cref{eq:KK1} we obtain:
 \begin{equation}\label{eq:KK2}
  \langle \mathcal K, \mathcal K\rangle= 2(2g-2)\langle \mathcal K, H_m\rangle -(2g-2)^2\langle H_m, H_m\rangle-\langle V_m, \mathcal K\rangle+(2g-2)\langle V_m, H_m\rangle+h_m.
 \end{equation}
We now expand $\langle D_m, V_m\rangle=0$ to get the equality
\begin{equation}\label{subsKK2}
-\langle V_m, \mathcal K\rangle+(2g-2)\langle V_m, H_m\rangle=\langle V_m, V_m\rangle.
\end{equation}
Moreover the Arakelov adjunction formula (see \cite[Corollary 5.6]{Lan88}) says that
\begin{equation}\label{subssKK2}
\langle \mathcal K, H_m\rangle=-\langle H_m, H_m\rangle.
\end{equation}
By substituting Equations (\ref{subsKK2}) and (\ref{subssKK2}) inside \Cref{eq:KK2} we get:
\begin{equation*}\label{KK3}
   \langle \mathcal K, \mathcal K\rangle= -4g(g-1)\langle H_m, H_m\rangle+\langle V_m, V_m\rangle+h_m.
\end{equation*}
Now summing for $m\in\{0,\infty\}$, gives:
\begin{equation}\label{eq:KK4}
   \langle \mathcal K, \mathcal K\rangle= -2g(g-1)\big(\langle H_0, H_0\rangle+\langle H_\infty, H_\infty\rangle\big)+\frac{\langle V_0, V_0\rangle+\langle V_\infty, V_\infty\rangle+h_0+h_\infty}{2}.
\end{equation}
Consider the divisor
\[
D_{\infty}-D_0=(2g-2)(H_0-H_{\infty})+V_{\infty}-V_0.
\]
It satisfies the hypotheses of \cite[Theorem 4(c)]{Fal84}; but  $(D_{\infty}-D_0)_{\mathbb Q}$ is supported on the cusps of $X_0(N)$, therefore by the Manin-Drinfeld's theorem (see \cite[Corollary~3.6]{Man72} and \cite[Theorem~1]{Dri73}) it is a torsion element in the Jacobian. It follows that $\langle D_{\infty}-D_0, D_{\infty}-D_0\rangle=0$, which means:
\begin{equation}\label{eq:KK5}
\langle H_0, H_0\rangle+\langle H_\infty, H_\infty\rangle=2\langle H_0, H_\infty\rangle+\frac{\langle V_0, V_0\rangle+\langle V_\infty, V_\infty\rangle-2\langle V_0, V_\infty\rangle}{(2g-2)^2}.
\end{equation}
Substituting \Cref{eq:KK5} inside \Cref{eq:KK4} we finally get:
\[
   \langle \mathcal K, \mathcal K\rangle=-4g(g-1)\langle H_0, H_\infty\rangle-\frac{\langle V_0, V_0\rangle+\langle V_\infty, V_\infty\rangle}{2g-2}+\frac{g\langle V_0, V_\infty\rangle}{g-1}+\frac{h_0+h_\infty}{2}.
\]
\end{proof}

We point out that the strategy we used to obtain the above formula for  $\langle\overline{\omega}, \overline{\omega}\rangle$ cannot be applied to the minimal model of a general algebraic curve. In fact the proof of \Cref{prop:omega2} depends heavily on the Manin-Drinfeld's theorem which is a specific property of modular curves.

\subsection{Computation of $V_0$ and $V_\infty$}\label{sec:comp}

\begin{center}
\framebox{In this subsection we fix $n$ odd or $M>1$.}
\end{center}

\noindent Let $\mathcal K$ be a canonical divisor of $\mathcal X$, let $C$ be one of the components of the special fiber $\mathcal X_p$ in $\mathcal X$ and let $g_C$ be the genus of $C$. By the adjunction formula we know that $C\cdot(C+\mathcal K)=2g_C-2$ (see for instance \cite[Theorem~1.37, Chapter~9]{Liu06}). From this, we can compute the intersection $C\cdot \mathcal K$ because:
\begin{align}\label{eq:CK}
C\cdot(C+\mathcal K)&=2g_{C}-2, \nonumber \\
C\cdot C+C\cdot\mathcal K&=2g_{C}-2, \nonumber \\
C\cdot\mathcal K&=2g_{C}-2-C^2.
\end{align}
By \Cref{eq:g=0,eq:Kcap} and using the intersection numbers computed in \Cref{sec:Edi,sec:min}, now we can explicitly write down the linear systems in \Cref{the_system} for every $p\mid N$ and $m\in\{0,\infty\}$. We remark that the constant terms of the systems
\[
-G_m\cdot C=(2g-2)H_m\cdot C-\mathcal K\cdot C=(2g-2)H_m\cdot C-2g_{C}+2+C^2, \quad\text{for }C \text{ a component},
\]
are given by \Cref{eq:Gm,eq:CK} and
\begin{align*}
H_m\cdot C_0&=\begin{cases}
1, & \text{if } m=0, \\
0, & \text{if } m=\infty,
\end{cases} \\
H_m\cdot C_n&=\begin{cases}
0, & \text{if } m=0, \\
1, & \text{if } m=\infty,
\end{cases} \\
H_m\cdot C_a&=H_m\cdot E_{a,i}=H_m\cdot F_{a,i}=0,\quad \text{for every }m,a,i.
\end{align*}

In the following proposition we describe the solutions of the linear systems in \Cref{the_system}.
\begin{proposition}\label{prop:general}
Let $w=(w_{\Gamma^{(p)}_i})_{i=1,\ldots,\nu_p}$ be a generator of the kernel of the intersection matrix in \Cref{the_system}, let $u=(u_{\Gamma^{(p)}_i})_{i=1,\ldots,\nu_p}$ be a particular solution of the linear system in \Cref{the_system} with $m=0$, let $v=(v_{\Gamma^{(p)}_i})_{i=1,\ldots,\nu_p}$ be a particular solution of the linear system in \Cref{the_system} with $m=\infty$, let $\phi$ be the Euler's totient function  and let $g,d,\varepsilon_\infty,\varepsilon_2,\varepsilon_3$ be defined as in \Cref{rem:genus}. Then
\allowdisplaybreaks
\begin{align*}
w_{C_a}&=\phi(p^{\min(a,n-a)}), \quad\text{for }a\in\{0,\ldots,n\}, \\
w_{E_{a,i}}&=\begin{cases}
\frac{1}{2}\phi(p^{\min(a,n-a)}), & \text{if }a\in\{1,\ldots,n-1\}, \\
\frac{1}{2}\varepsilon_\infty(p^n), & \text{if }a=\infty,
\end{cases} \quad\text{for }i\in\{1,\ldots,\varepsilon_2(M)\}, \\
w_{F_{a,i}}&=\begin{cases}
\frac{1}{3}\phi(p^{\min(a,n-a)}), & \text{if }a\in\{1,\ldots,n-1\}, \\
\frac{1}{3}\varepsilon_\infty(p^n), & \text{if }a=\infty, \\
\frac{1}{2}\varepsilon_\infty(p^n), & \text{if }a\in\{0,n\},
\end{cases} \quad\text{for }i\in\{1,\ldots,\varepsilon_3(M)\}, \\
u_{C_0}&=0, \\
u_{C_a}&=\frac{2(g-1)\phi\left(p^{\min(a,n-a)}\right)}{d(N)(p-1)}\left(6a(p-1)+6+\frac{d(N)-3(\min(a,n-a)(p-1)+1)\varepsilon_\infty(N)}{g-1}\right), \\
& \text{for }a\in\{1,\ldots,n-1\}, \\
u_{C_n}&=\frac{2(g-1)}{d(N)(p-1)}\left(6n(p-1)+12\right), \\
u_{E_{a,i}}&=\frac{(g-1)\phi\left(p^{\min(a,n-a)}\right)}{d(N)(p-1)}\left(6a(p-1)+6+\frac{d(N)-3(\min(a,n-a)(p-1)+1)\varepsilon_\infty(N)}{g-1}\right), \\
& \text{for }a\in\{1,\ldots,n-1\},i\in\{1,\ldots,\varepsilon_2(M)\}, \\
u_{E_{\infty,i}}&=\frac{(g-1)\varepsilon_\infty(p^{n})}{2d(N)(p-1)}\left(6n(p-1)+36+\frac{-3(n(p-1)-2)\varepsilon_\infty(N)+8\varepsilon_3(N)}{g-1}\right)-\frac{2}{p-1}+\frac{6\varepsilon_\infty(M)}{d(M)(p-1)}, \\
& \text{for }i\in\{1,\ldots,\varepsilon_2(M)\}, \\
u_{F_{a,i}}&=\frac{1}{3}+\frac{2(g-1)\phi\left(p^{\min(a,n-a)}\right)}{3d(N)(p-1)}\left(6a(p-1)+6+\frac{d(N)-3(\min(a,n-a)(p-1)+1)\varepsilon_\infty(N)}{g-1}\right), \\
& \text{for }a\in\{1,\ldots,n-1\},i\in\{1,\ldots,\varepsilon_3(M)\}, \\
u_{F_{\infty,i}}&=\frac{1}{3}+\frac{(g-1)\varepsilon_\infty(p^{n})}{d(N)(p-1)}\Big(2n(p-1)+12+\frac{-(n(p-1)-2)\varepsilon_\infty(N)+2\varepsilon_2(N)}{g-1}\Big)-\frac{4}{3(p-1)}+\frac{4\varepsilon_\infty(M)}{d(M)(p-1)}, \\
& \text{for }i\in\{1,\ldots,\varepsilon_3(M)\}, \\
u_{F_{0,i}}&=\frac{(g-1)\varepsilon_\infty(p^{n})}{2d(N)(p-1)}\left(6n(p-1)-2(p+1)+36+\frac{-3(n(p-1)-2)\varepsilon_\infty(N)+6\varepsilon_2(N)}{g-1}\right)-\frac{2}{p-1}+\frac{6\varepsilon_\infty(M)}{d(M)(p-1)}, \\
& \text{for }i\in\{1,\ldots,\varepsilon_3(M)\}, \\
u_{F_{n,i}}&=\frac{(g-1)\varepsilon_\infty(p^{n})}{2d(N)(p-1)}\left(6n(p-1)+2(p+1)+36+\frac{-3(n(p-1)-2)\varepsilon_\infty(N)+6\varepsilon_2(N)}{g-1}\right)-\frac{2}{p-1}+\frac{6\varepsilon_\infty(M)}{d(M)(p-1)}, \\
& \text{for }i\in\{1,\ldots,\varepsilon_3(M)\}, \\
v_{C_0}&=0, \\
v_{C_a}&=\frac{2(g-1)\phi\left(p^{\min(a,n-a)}\right)}{d(N)(p-1)}\left(-6a(p-1)-6+\frac{d(N)-3(\min(a,n-a)(p-1)+1)\varepsilon_\infty(N)}{g-1}\right), \\
& \text{for }a\in\{1,\ldots,n-1\}, \\
v_{C_n}&=\frac{2(g-1)}{d(N)(p-1)}\left(-6n(p-1)-12\right), \\
v_{E_{a,i}}&=\frac{(g-1)\phi\left(p^{\min(a,n-a)}\right)}{d(N)(p-1)}\left(-6a(p-1)-6+\frac{d(N)-3(\min(a,n-a)(p-1)+1)\varepsilon_\infty(N)}{g-1}\right), \\
& \text{for }a\in\{1,\ldots,n-1\},i\in\{1,\ldots,\varepsilon_2(M)\}, \\
v_{E_{\infty,i}}&=\frac{(g-1)\varepsilon_\infty(p^{n})}{2d(N)(p-1)}\Big(-6n(p-1)+12+\frac{-3(n(p-1)-2)\varepsilon_\infty(N)+8\varepsilon_3(N)}{g-1}\Big)-\frac{2}{p-1}+\frac{6\varepsilon_\infty(M)}{d(M)(p-1)}, \\
& \text{for }i\in\{1,\ldots,\varepsilon_2(M)\}, \\
v_{F_{a,i}}&=\frac{1}{3}+\frac{2(g-1)\phi\left(p^{\min(a,n-a)}\right)}{3d(N)(p-1)}\left(-6a(p-1)-6+\frac{d(N)-3(\min(a,n-a)(p-1)+1)\varepsilon_\infty(N)}{g-1}\right), \\
& \text{for }a\in\{1,\ldots,n-1\},i\in\{1,\ldots,\varepsilon_3(M)\}, \\
v_{F_{\infty,i}}&=\frac{1}{3}+\frac{(g-1)\varepsilon_\infty(p^{n})}{d(N)(p-1)}\Big(-2n(p-1)+4+\frac{-(n(p-1)-2)\varepsilon_\infty(N)+2\varepsilon_2(N)}{g-1}\Big)-\frac{4}{3(p-1)}+\frac{4\varepsilon_\infty(M)}{d(M)(p-1)}, \\
& \text{for }i\in\{1,\ldots,\varepsilon_3(M)\}, \\
v_{F_{0,i}}&=\frac{(g-1)\varepsilon_\infty(p^{n})}{2d(N)(p-1)}\left(-6n(p-1)+2(p+1)+12+\frac{-3(n(p-1)-2)\varepsilon_\infty(N)+6\varepsilon_2(N)}{g-1}\right)-\frac{2}{p-1}+\frac{6\varepsilon_\infty(M)}{d(M)(p-1)}, \\
& \text{for }i\in\{1,\ldots,\varepsilon_3(M)\}, \\
v_{F_{n,i}}&=\frac{(g-1)\varepsilon_\infty(p^{n})}{2d(N)(p-1)}\left(-6n(p-1)-2(p+1)+12+\frac{-3(n(p-1)-2)\varepsilon_\infty(N)+6\varepsilon_2(N)}{g-1}\right)-\frac{2}{p-1}+\frac{6\varepsilon_\infty(M)}{d(M)(p-1)}, \\
& \text{for }i\in\{1,\ldots,\varepsilon_3(M)\}.
\end{align*}
\allowdisplaybreaks[0]
\end{proposition}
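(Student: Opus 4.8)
The plan is to \emph{verify} the stated closed forms directly, after using the geometry of $\mathcal{X}_p$ to collapse the system to a much smaller one; the forms themselves are discovered by solving small cases and spotting the pattern. I would begin with the kernel generator $w$. By \Cref{prop:V_i} and \cite[Lemma~2.2.1]{Mor13} the kernel of the intersection matrix in \Cref{the_system} is one-dimensional and is spanned by the vector of component multiplicities. Since the multiplicities recorded in the description of $\mathcal{X}_p$ in \Cref{sec:Edi,sec:min} are exactly the entries listed for $w$, this identifies the kernel with no further work.

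For the particular solutions I would first make the right-hand sides explicit. Combining the adjunction identity \Cref{eq:CK}, the definition \Cref{eq:Gm} of $G_m$, and the tabulated values of $H_m\cdot\Gamma^{(p)}_i$ gives $-G_m\cdot\Gamma^{(p)}_i=(2g-2)H_m\cdot\Gamma^{(p)}_i-2g_{\Gamma^{(p)}_i}+2+(\Gamma^{(p)}_i)^2$, where $g_{C_a}=g(X_0(M))$ for the Igusa components and the genus is $0$ for the rational components $E,F$. The decisive point is that $H_m$ meets only $C_0$ (for $m=0$) or $C_n$ (for $m=\infty$), so the two systems share all constant terms except one; this is the source of the close resemblance between the formulas for $u$ and $v$.

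The main structural simplification is that each $E_{a,i}$ and $F_{a,i}$ with interior index $a\in\{1,\dots,n-1\}$ is a \emph{leaf}: it is rational, has self-intersection $-2$ resp.\ $-3$, meets a single Igusa component $C_a$ transversally, and meets nothing else. Its equation is therefore $-2x_{E_{a,i}}+x_{C_a}=0$ resp.\ $-3x_{F_{a,i}}+x_{C_a}=-1$, forcing $x_{E_{a,i}}=\tfrac12 x_{C_a}$ and $x_{F_{a,i}}=\tfrac13+\tfrac13 x_{C_a}$ --- exactly the relations visible between the $E$-, $F$- and $C$-entries of the proposed solution. Substituting these back eliminates all leaf variables, folding their contributions into the diagonal and constant terms of a reduced system in the Igusa variables $x_{C_a}$ alone (the few components indexed by $\infty$, which meet several $C_a$ simultaneously, are handled separately according to $p\bmod 12$ and the parity of $n$). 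It then remains to check that the closed forms for $u_{C_a}$ and $v_{C_a}$ satisfy the reduced equations: each left-hand side is a sum $\sum_{a'}(C_a\cdot C_{a'})x_{C_{a'}}$ whose terms carry the explicit $p^{\mu(a,a')}$ dependence from \Cref{sec:Edi}, and the verification reduces to showing these sums collapse to the prescribed constant.

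To avoid doing the work twice I would exploit the symmetry $a\mapsto n-a$, under which the intersection matrix is invariant while $C_0$ and $C_n$ --- hence $H_0$ and $H_\infty$ --- are interchanged; this carries a solution of the $m=0$ system to one of the $m=\infty$ system, and subtracting the multiple of $w$ that restores the normalization $x_{C_0}=0$ produces $v$ from $u$. The normalization $u_{C_0}=v_{C_0}=0$ pins down a single point on each one-dimensional affine solution space. The main obstacle is the final identity check: the number of Igusa variables grows with $n$, and the intersection numbers branch into four residue classes modulo $12$ and two parities of $n$, so one must confirm that the proposed rational expressions in $g,d(N),\varepsilon_\infty(N),\varepsilon_2(N),\varepsilon_3(N)$ satisfy \emph{every} equation as an identity in these symbolic parameters --- the verification the authors delegate to Mathematica.
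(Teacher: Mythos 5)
Your proposal is correct, and it is genuinely more structured than the paper's own proof, which consists of a single sentence: the authors substituted the candidate vectors into the full systems \Cref{the_system} and verified the resulting identities with Mathematica, case by case in $p\bmod{12}$ and the parity of $n$. Everything in your plan that can be checked by hand does check out: the kernel generator $w$ is the multiplicity vector, exactly as \cite[Lemma~2.2.1]{Mor13} guarantees (this is already implicit in the proof of \Cref{prop:V_i}); the interior components $E_{a,i}$ and $F_{a,i}$ are indeed leaves whose rows read $x_{C_a}-2x_{E_{a,i}}=0$ and $x_{C_a}-3x_{F_{a,i}}=-1$ (since $\mathcal K\cdot E_{a,i}=0$ and $\mathcal K\cdot F_{a,i}=1$ by \Cref{eq:CK}), matching the stated relations $u_{E_{a,i}}=\tfrac12 u_{C_a}$ and $u_{F_{a,i}}=\tfrac13+\tfrac13 u_{C_a}$; and the involution $\sigma\colon a\mapsto n-a$ preserves the intersection matrix, carries the $m=0$ constant-term vector to the $m=\infty$ one, and the recipe $v=\sigma(u)-u_{C_n}w$ reproduces the stated $v$ exactly (for instance $v_{C_n}=-u_{C_n}$, $v_{C_a}=u_{C_{n-a}}-\phi(p^{\min(a,n-a)})\,u_{C_n}$, and $v_{F_{0,i}}=u_{F_{n,i}}-\tfrac12\varepsilon_\infty(p^n)\,u_{C_n}$ all agree with the Proposition). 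What your route buys is a halving of the work (only the $m=0$ system needs verification) and a reduction of the machine check to the Igusa block plus the few $\infty$-indexed components; what the paper's brute force buys is simplicity and immunity from bookkeeping slips. Two such slips to note: the two constant-term vectors differ in \emph{two} entries (those of $C_0$ and of $C_n$), not one; and your genus claims ($g_{C_a}=g(X_0(M))$, the leaves rational) rely on the standing hypothesis of \Cref{sec:comp} that $n$ is odd or $M>1$, so that no blow-down has occurred --- this should be said explicitly. Finally, be aware that your argument, like the paper's, still terminates in a symbolic identity check: the reduced Igusa-block equations involve sums of terms of the form $p^{\mu(a,a')}\phi(p^{\min(a',n-a')})\,u_{C_{a'}}$ over $a'$, branching over four congruence classes and two parities, and you (rightly, and openly) delegate that step to the computer just as the authors do.
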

\begin{proof}
We checked these solutions using the software Mathematica. See \cite{codes} for the details.
\end{proof}
\begin{corollary}\label{cor:general}
Let $\mathcal P$ be the set of prime dividing $N$. The divisors $V_0$ and $V_\infty$ defined in \Cref{eq:Vm} can be chosen as follows
\begin{align*}
V_0 =\sum_{p\in\mathcal P}V_0^{(p)}=\sum_{p\in\mathcal P}\sum^{\nu_p}_{i=1} u_{\Gamma^{(p)}_i}\Gamma^{(p)}_i, \quad\text{and}\quad V_\infty =\sum_{p\in\mathcal P}V_\infty^{(p)}=\sum_{p\in\mathcal P}\sum^{\nu_p}_{i=1} v_{\Gamma^{(p)}_i}\Gamma^{(p)}_i,
\end{align*}
where $u_{\Gamma^{(p)}_i}$ and $v_{\Gamma^{(p)}_i}$ are described in \Cref{prop:general} above.
\end{corollary}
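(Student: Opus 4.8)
The plan is to read this statement as a pure bookkeeping consequence of \Cref{prop:V_i} and \Cref{prop:general}: the former guarantees that \emph{any} prime-by-prime solution of the systems in \Cref{the_system} assembles into admissible divisors $V_0,V_\infty$, while the latter hands us one explicit family of such solutions. So no new geometric input is needed, and the task is only to substitute and to check that the substitution is legitimate at every prime of $\mathcal P$.

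First I would recall the exact mechanism of \Cref{prop:V_i}. There, for each $p\in\mathcal P$ one picks an \emph{arbitrary} solution $(x^{(p)}_1,\ldots,x^{(p)}_{\nu_p})$ of \Cref{the_system} (the system depending on whether $m=0$ or $m=\infty$), forms $V^{(p)}_m$ via \Cref{eq:Vm}, and sets $V_m=\sum_{p\in\mathcal P}V^{(p)}_m$; by \cite[Lemma~2.2.1]{Mor13} the affine space of solutions at each $p$ is one-dimensional, which is precisely the freedom behind the phrase ``infinitely many $V_0$ and $V_\infty$'' in the proof of \Cref{prop:V_i}. The content of the corollary is therefore just to fix, once and for all, a representative in each of these solution spaces.

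The key step is to observe that \Cref{prop:general} supplies exactly such representatives: writing $N=p^nM$ with $p\nmid M$, the vector $u=(u_{\Gamma^{(p)}_i})$ is a particular solution of \Cref{the_system} with $m=0$ and $v=(v_{\Gamma^{(p)}_i})$ a particular solution with $m=\infty$. Hence I would simply take $x^{(p)}_i=u_{\Gamma^{(p)}_i}$ to build $V^{(p)}_0$ and $x^{(p)}_i=v_{\Gamma^{(p)}_i}$ to build $V^{(p)}_\infty$, for each $p\in\mathcal P$, and then sum over $\mathcal P$. Substituting these into $V_m=\sum_{p\in\mathcal P}V^{(p)}_m$ and into \Cref{eq:Vm} yields verbatim the two expressions in the statement, and \Cref{prop:V_i} guarantees that the resulting $D_0,D_\infty$ are $f$-numerically trivial, so this choice is admissible.

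The only point demanding genuine care — and where I expect the hidden work to sit — is confirming that \Cref{prop:general} is applicable at \emph{every} $p\in\mathcal P$ under the standing hypothesis of \Cref{sec:comp} (that $n$ is odd or $M>1$). This is automatic: if $N$ is a prime power then $\mathcal P=\{p\}$, $M=1$, and the hypothesis forces $n$ odd; if $N$ has at least two prime factors then for each $p\in\mathcal P$ the complementary factor satisfies $M=N/p^n>1$. In either case the local hypothesis holds at $p$, so the formulas of \Cref{prop:general} are valid there. Beyond this, the remaining care is purely notational: one must match, for each residue $p\bmod 12$ and each parity of $n$, the actual list of components of $\mathcal X_p$ recorded in \Cref{sec:Edi,sec:min} with the corresponding coefficients $u_{C_a},u_{E_{a,i}},u_{E_{\infty,i}},u_{F_{a,i}},u_{F_{\infty,i}},u_{F_{0,i}},u_{F_{n,i}}$ (and their $v$-counterparts). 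Since \Cref{prop:general} tabulates a coefficient for every component type that can occur in a special fibre, each component receives its multiplicity and the assembly is complete.
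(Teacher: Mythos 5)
Your proposal is correct and matches the paper's treatment: the corollary carries no separate proof precisely because it is the immediate substitution of the particular solutions $u$ and $v$ from \Cref{prop:general} into the construction of \Cref{prop:V_i} via \Cref{eq:Vm}. Your check that the standing hypothesis of \Cref{sec:comp} ($n$ odd or $M>1$) holds locally at every $p\in\mathcal P$ — automatic when $N$ has two or more prime factors, and excluding only $N=p^n$ with $n$ even, which the paper defers to \Cref{sec:thespecialone} — is exactly the right point to make explicit.
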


\subsection{$V_0$ and $V_\infty$ in the special case $n$ even and $M=1$}\label{sec:thespecialone}

If $n$ is even and $M=1$, before solving the linear systems in \Cref{the_system} that correspond to the minimal model $\mathcal X$, we have to blow down by $\pi$ the Edixhoven's model $\mathcal X'$ as explained in \Cref{sec:non_min-edi}. But we noticed that the solutions of the systems after the blow downs are exactly the same as those written in \Cref{prop:general} after removing the equations and the variables corresponding to the contracted components. The comparison has been performed by using the software Mathematica (see \cite{codes} for the details). In the remainder of this section we describe as the constant terms of the linear systems change after the blow downs. 

In this case the divisors $V_0$ and $V_\infty$ are supported only over $p$. Keeping the notation of \Cref{sec:minmod}, we denote by $C_a,E_{a,1},F_{a,1}$ the components of the special fiber $\mathcal X_p$ in $\mathcal X$ and by $C'_a,E'_{a,1},F'_{a,1}$ components of the special fiber $\mathcal X'_p$ in  $\mathcal X'$. Moreover $\mathcal K$ is a canonical divisor of $\mathcal X$ and $\mathcal K'$ is a canonical divisor of $\mathcal X'$. The pullback on $\mathcal X'$ of the canonical divisor $\mathcal K$ of $\mathcal X$ is
\[
\pi^*(\mathcal K)=\begin{cases}
\mathcal K'-4C'_{n/2}-2E'_{n/2,1}-F'_{n/2,1}, & \text{if }p\equiv 1 \pmod{12}, \\
\mathcal K'-4C'_{n/2}-2E'_{n/2,1}-F'_{\infty,1}, & \text{if }p\equiv 5 \pmod{12}, \\
\mathcal K'-4C'_{n/2}-2E'_{\infty,1}-F'_{n/2,1}, & \text{if }p\equiv 7 \pmod{12}, \\
\mathcal K'-4C'_{n/2}-2E'_{\infty,1}-F'_{\infty,1}, & \text{if }p\equiv 11 \pmod{12},
\end{cases}
\]
where the computations are done as explained in \Cref{sec:non_min-edi}. Since when $M=1$ we have that
\begin{equation}\label{eq:g=0}
g_{C'_a}=g_{E'_{a',1}}=g_{F'_{a',1}}=0, \quad \text{for every }a\in\{0,\ldots,n\}\text{ and }a'\in\{1,\ldots,n-1,\infty\},
\end{equation}
it follows that
\begin{equation}\label{eq:Kcap}
\begin{aligned}
C_a\cdot\mathcal K&=-(C'_a)^2-4k-2-2\xi(-1)-\xi(-3),\quad\text{for }a\in\{0,\ldots,n\}- \{n/2\}, \\ 
E_{a,1}\cdot\mathcal K&=0,\quad \text{for }a\in\{1,\ldots,n-1,\infty\}- \{n/2\}, \\ 
F_{a,1}\cdot\mathcal K&=1,\quad \text{for }a\in\{1,\ldots,n-1,\infty\}- \{n/2\},
\end{aligned}
\end{equation}
where $\xi$ is defined in \Cref{eq:xi} and $k$ is defined in \Cref{eq:k}. We explain how to compute, for instance, $C_a\cdot\mathcal K$ with $a\ne n/2$ and when $p\equiv 1\pmod{12}$:
\begin{align*}
C_a\cdot\mathcal K&=\pi^*(C_a)\cdot\pi^*(\mathcal K)= \\
&=(C'_a+6kC'_{n/2}+3kE'_{n/2,1}+2kF'_{n/2,1})\cdot(\mathcal K'-4C'_{n/2}-2E'_{n/2,1}-F'_{n/2,1})= \\
&=C'_a\cdot\mathcal K'-4C'_a\cdot C'_{n/2}+6kC'_{n/2}\cdot\mathcal K'-24k(C'_{n/2})^2-24kC'_{n/2}\cdot E'_{n/2,1}-14kC'_{n/2}\cdot F'_{n/2,1}+ \\
&\quad+3kE'_{n/2,1}\cdot\mathcal K'-6k(E'_{n/2,1})^2+2kF'_{n/2,1}\cdot\mathcal K'-2k(F'_{n/2,1})^2= \\
&=2g_{C'_a}-2-(C'_a)^2-4k+6k(2g_{C'_{n/2}}-2-(C'_{n/2})^2)+24k-24k-14k+\\
&\quad+3k(2g_{E'_{n/2,1}}-2-(E'_{n/2,1})^2)+12k+2k(2g_{F'_{n/2,1}}-2-(F'_{n/2,1})^2)+6k= \\
&=2g_{C'_a}-2-(C'_a)^2-4k-6k-14k+12k+2k+6k=-2-(C'_a)^2-4k.
\end{align*}
Hence, to obtain the constant terms of the linear systems in \Cref{the_system} for the model after the blow downs it is enough to substitute \Cref{eq:Kcap} in
\[
-G_m\cdot C=(2g-2)H_m\cdot C-\mathcal K\cdot C, \quad\text{for }C \text{ a component},
\]
and recall that
\begin{align*}
H_m\cdot C_0&=\begin{cases}
1, & \text{if } m=0, \\
0, & \text{if } m=\infty,
\end{cases} \\
H_m\cdot C_n&=\begin{cases}
0, & \text{if } m=0, \\
1, & \text{if } m=\infty,
\end{cases} \\
H_m\cdot C_a&=H_m\cdot E_{a,1}=H_m\cdot F_{a,1}=0,\quad \text{for every }m,a.
\end{align*}
\begin{remark}
We do not need this in the following, but we remark that by the previous computations and using \Cref{eq:CK}, we find that the blow downs change the genera of the prime divisors in the following way:
\begin{align*}
g_{C_a}&=g_{C'_a}+3k^2+(3\xi(-1)+2\xi(-3)-2)k+\xi(-1)\xi(-3),\quad\text{for }a\in\{0,\ldots,n\}- \{n/2\}, \\
g_{E_{a,1}}&=g_{E'_{a,1}},\quad \text{for }a\in\{1,\ldots,n-1,\infty\}- \{n/2\}, \\
g_{F_{a,1}}&=g_{F'_{a,1}},\quad \text{for }a\in\{1,\ldots,n-1,\infty\}- \{n/2\},
\end{align*}
where $\xi$ is defined in \Cref{eq:xi} and $k$ is defined in \Cref{eq:k}. For example, again for $C_a$ with $a\ne n/2$ and $p\equiv 1\pmod{12}$, we have that
\[
2g_{C_a}-2-C_a^2=C_a\cdot\mathcal K=2g_{C'_a}-2-(C'_a)^2-4k,
\]
and then
\[
g_{C_a}=g_{C'_a}+\frac{C_a^2-(C'_a)^2}{2}-2k=g_{C'_a}+3k^2-2k.
\]
Moreover, by \Cref{eq:g=0}, we have
\begin{align*}
g_{C_a}&=3k^2+(3\xi(-1)+2\xi(-3)-2)k+\xi(-1)\xi(-3),\quad\text{for }a\in\{0,\ldots,n\}- \{n/2\}, \\
g_{E_{a,1}}&=0,\quad \text{for }a\in\{1,\ldots,n-1,\infty\}- \{n/2\}, \\
g_{F_{a,1}}&=0,\quad \text{for }a\in\{1,\ldots,n-1,\infty\}- \{n/2\}.
\end{align*}
\end{remark}

\subsection{Conclusion of the proof}\label{sec:concl}

\begin{proof}[Proof of \Cref{thm:main_res}]
We now write down the asymptotics for the summands of \Cref{eq:mainformula} that we recall here:
\[
\langle\overline{\omega}, \overline{\omega}\rangle=\underbrace{-4g(g-1)\langle H_0, H_\infty\rangle}_{(a)}\underbrace{+\frac{g\langle V_0, V_\infty\rangle}{g-1}-\frac{\langle V_0, V_0\rangle+\langle V_\infty, V_\infty\rangle}{2g-2}}_{(b)}\underbrace{+\frac{h_0+h_\infty}{2}}_{(c)}.
\]
Regarding the summand $(a)$: \cite[Theorem 1.2]{MvP22} says that $2g(1-g)\mathcal G(0,\infty)=2g\log N+o(g\log N)$, so
\[
-4g(g-1)\langle H_0, H_\infty\rangle=2g\log N+o(g\log N), \quad \text{for }N\to+\infty,
\]
where we use the relation $2\langle H_0, H_\infty\rangle=-\mathcal G(0,\infty)$ explained in the last paragraph of \Cref{sec:arakelov}. In \cite[Section 6]{MU98} and \cite[Lemme 4.1.1]{AU97} it is shown that\footnote{Such result holds for every $N>1$, in fact the square-free hypothesis of \cite{MU98} and \cite{AU97} is never used in this point.}:
\begin{equation}\label{eq:h_m}
h_m=\begin{cases}
0, & \text{if }\varepsilon_2(N)=\varepsilon_3(N)=0,\\
O(\tau(N)^2\log N), & \text{otherwise}, \\
\end{cases}
\end{equation}
where $\tau(N)$ denotes the number of positive divisors of $N$.  In \cite{NR83}  it is proved that $\tau(N)=O(N^{\frac{1,538\log 2}{\log\log N}})$. Therefore regarding the summand $(c)$ we have:
\[
\frac{h_0+h_\infty}{2}=o(g\log N), \quad \text{for }N\to+\infty.
\]
Now we focus on summand (b). If $\mathcal P$ denotes the set of primes dividing $N$, we have
\[
\frac{g\langle V_0, V_\infty\rangle}{g-1}-\frac{\langle V_0, V_0\rangle+\langle V_\infty, V_\infty\rangle}{2g-2}=\sum_{p\in\mathcal P}\frac{g\langle V_0^{(p)}, V_\infty^{(p)}\rangle}{g-1}-\frac{\langle V_0^{(p)}, V_0^{(p)}\rangle+\langle V_\infty^{(p)}, V_\infty^{(p)}\rangle}{2g-2}.
\]
By \Cref{prop:general}, for every $p\in\mathcal P$, we get
\begin{equation}\label{eq:opicc}
\frac{g\langle V_0^{(p)}, V_\infty^{(p)}\rangle}{g-1}-\frac{\langle V_0^{(p)}, V_0^{(p)}\rangle+\langle V_\infty^{(p)}, V_\infty^{(p)}\rangle}{2g-2}=ng\log p+\frac{f(p,n,M)\log p}{12d(N)(p-1)},
\end{equation}
where
\begin{align*}
   &f(p,n,M):=2d(N)^2-8d(N)d(M)(p^{n-1}-1)-6(n(p-1)-4)d(N)\varepsilon_\infty(N)-96d(N)\varepsilon_\infty(M)+\\
   &-(3(\varepsilon_2(N)+2\varepsilon_3(N)-4)n-2\varepsilon_3(N)+2(1+(-1)^n)(\xi(-3)\varepsilon_3(M)-9\delta(1,M)))d(N)(p-1)+\\
   &+4(12-3\varepsilon_2(N)-4\varepsilon_3(N))d(N)+144d(p^n)\varepsilon_\infty(M)^2+12(n(p-1)+2)(3\varepsilon_2(N)+4\varepsilon_3(N)-12)\varepsilon_\infty(N)+\\
   &+2(n(p-1)+2)(9\varepsilon_2(N)(\varepsilon_2(M)-4)+16\varepsilon_3(N)(\varepsilon_3(M)-3)+12\varepsilon_2(N)\varepsilon_3(N)),
\end{align*}
here $\delta$ in the second line is the Kronecker delta, in fact the $-9$ occurs only when we need to change the model using the blow downs. We proved that \Cref{eq:opicc} holds by using the software Mathematica (see \cite{codes} for the details). It follows that
\[
\frac{g\langle V_0, V_\infty\rangle}{g-1}-\frac{\langle V_0, V_0\rangle+\langle V_\infty, V_\infty\rangle}{2g-2}=\sum_{p\in\mathcal P}\left(ng\log p+\frac{f(p,n,M)\log p}{12d(N)(p-1)}\right)=g\log N+\sum_{p\in\mathcal P}\frac{f(p,n,M)\log p}{12d(N)(p-1)}.
\]
To conclude, we only need to prove that
\[
\sum_{p\in\mathcal P}\frac{f(p,n,M)\log p}{12d(N)(p-1)}=o(g\log N), \quad \text{for }N\to+\infty.
\]
Since $g=\frac{d(N)}{12}+o(N)$, for $N\to+\infty$ (see \Cref{eq:ogenus}), we have
\[
\frac{1}{g\log N}\sum_{p\in\mathcal P}\frac{f(p,n,M)\log p}{12d(N)(p-1)}\sim \sum_{p\in\mathcal P}\frac{f(p,n,M)\log p}{d(N)^2(p-1)\log N}.
\]
Finally, it is not hard to check that for each summand $s_f(p,n,M)$ of $f(p,n,M)$, we have
\[
\lim_{N\to+\infty}\frac{s_f(p,n,M)\log p}{d(N)^2(p-1)\log N}=0.
\]
\end{proof}

\begin{remark}
When $\varepsilon_2(N)=\varepsilon_3(N)=0$, the summand (c) of \Cref{eq:mainformula} is zero (see \Cref{eq:h_m}); hence the expressions of
\[
\frac{g\langle V_0, V_\infty\rangle}{g-1}-\frac{\langle V_0, V_0\rangle+\langle V_\infty, V_\infty\rangle}{2g-2},
\]
computed in the proof of \Cref{thm:main_res} above, are the exact value of the finite part of $\langle\overline{\omega}, \overline{\omega}\rangle$. 
\end{remark}
\begin{question}
For the curves $X_0(p^n)$ with $p>3$ a prime and $n$ even, when $p^n\to +\infty$, the results show that the asymptotic of $\langle\overline{\omega}, \overline{\omega}\rangle$ does not vary changing the model from the Edixhoven's one to the minimal regular model. Therefore we wonder whether some kind of independence of the asymptotic of $\langle\overline{\omega}, \overline{\omega}\rangle$ from the model holds more in general.
\end{question}

\begin{appendices}
\section{Drawings of the special fibres}\label{sec:app}
In this appendix we include the drawings of the special fiber over $p$ of  the Edixhoven's model of $X_0(p^nM)$ described in \Cref{sec:Edi}. We denote by $P_1,\ldots,P_k$ the $k$ supersingular points described in \Cref{eq:k}.

\begin{center}
\begin{tikzpicture}[use Hobby shortcut]

\coordinate[label=right:\,\,$P_k$] (k1) at (0,0);
\coordinate (k2) at (0,2);
\coordinate[label=right:\,\,$P_1$] (k3) at (0,4);
\node[point] at (k1){};
\node[point] at (k3) {};


\coordinate[label=left:$C'_0$] (C01) at (-4,6);
\coordinate (C03) at (1,2.5);
\coordinate[label=right:$C'_0$] (C06) at (1,2);
\coordinate (C04) at (1,1.5);
\coordinate (C05) at (-4,-0.5);
\coordinate[label=right:$C'_0$] (C02) at (-6,-7);

\draw[thin,black,out angle=90,in angle=-90] (C02)..(C05)..(k1)..(C04);
\draw[dotted,black,out angle=90,in angle=-90] (C04)..(C03);
\draw[thin,black,out angle=90,in angle=-20](C03)..(k3)..(C01);


\coordinate[label=left:$C'_a$] (Ca1) at (-2,6);
\coordinate (Ca3) at (2.5,2.5);
\coordinate[label=right:$C'_a$] (Ca11) at (2.6,2);
\coordinate (Ca4) at (2.5,1.5);
\coordinate (Ca5) at (-1,-1);
\coordinate (Ca10) at (-4,-2.5);
\coordinate (Ca6) at (-4,-3);
\coordinate (Ca7) at (-4,-4.3);
\coordinate (Ca8) at (-4,-5);
\coordinate (Ca9) at (-4,-6.3);
\coordinate[label=right:$C'_a$] (Ca2) at (-4,-7);
\coordinate [label=right:$0<a<\frac{n}{2}$] (Calab) at (-3.5,6.5);

\coordinate (Ca6A) at (-4,-3.45);
\coordinate (Ca6B) at (-4,-3.85);

\coordinate (Ca8A) at (-4,-5.45);
\coordinate (Ca8B) at (-4,-5.85);

\draw[thin,black,out angle=90,in angle=-90] (Ca2)..(Ca9)..(Ca8B);
\draw[dotted,black,out angle=90,in angle=-90] (Ca8B)..(Ca8A);
\draw[thin,black,out angle=90,in angle=-90](Ca8A)..(Ca7)..(Ca6B);
\draw[dotted,black,out angle=90,in angle=-90] (Ca6B)..(Ca6A);
\draw[thin,black,out angle=90,in angle=-90] (Ca6A)..(Ca10)..(Ca5)..(k1)..(Ca4);
\draw[dotted,black,out angle=90,in angle=-90] (Ca4)..(Ca3);
\draw[thin,black,out angle=90,in angle=-20](Ca3)..(k3)..(Ca1);


\coordinate[label=right:$C'_{\frac{n}{2}}$] (Ca21) at (0,6);
\coordinate (Ca23) at (0,2.5);
\coordinate[label=right:$C'_{\frac{n}{2}}$] (Ca24) at (-0.05,2);
\coordinate[label=right:$C'_{\frac{n}{2}}$] (Ca22) at (0,-7);
\coordinate[label=right:(if $n$ is even)] (Ca2lab) at (-1,6.5);

\coordinate (CaExtra1) at (0, 2.5);
\coordinate (CaExtra2) at (0, 1.5);

\coordinate (CaExtra3) at (0,-3.45);
\coordinate (CaExtra4) at (0,-3.85);

\coordinate (CaExtra5) at (0,-5.45);
\coordinate (CaExtra6) at (0,-5.85);

\draw[thick,gray] plot[smooth] coordinates{(Ca21) (CaExtra1)};
\draw[dotted, gray] plot[smooth] coordinates{(CaExtra1) (CaExtra2)};
\draw[thick,gray] plot[smooth] coordinates{(CaExtra2) (CaExtra3)};
\draw[dotted,gray] plot[smooth] coordinates{(CaExtra3) (CaExtra4)};
\draw[thick,gray] plot[smooth] coordinates{(CaExtra4) (CaExtra5)};
\draw[dotted,gray] plot[smooth] coordinates{(CaExtra5) (CaExtra6)};
\draw[thick,gray] plot[smooth] coordinates{(CaExtra6)  (Ca22)};


\coordinate[label=right:$C'_b$] (Cb1) at (2,6);
\coordinate (Cb3) at (-2.5,2.5);
\coordinate[label=left:$C'_b$] (Cb11) at (-2.6,2);
\coordinate (Cb4) at (-2.5,1.5);
\coordinate (Cb5) at (1,-1);
\coordinate (Cb10) at (4,-2.5);
\coordinate (Cb6) at (4,-3);
\coordinate (Cb7) at (4,-4.3);
\coordinate (Cb8) at (4,-5);
\coordinate (Cb9) at (4,-6.3);
\coordinate[label=right:$C'_b$] (Cb2) at (4,-7);
\coordinate[label=right:$\frac{n}{2}<b<n$] (Cblab) at (1.5,6.5);

\coordinate (Cb6A) at (4,-3.45);
\coordinate (Cb6B) at (4,-3.85);

\coordinate (Cb8A) at (4,-5.45);
\coordinate (Cb8B) at (4,-5.85);

\draw[thin,black,out angle=90,in angle=-90] (Cb2)..(Cb9)..(Cb8B);
\draw[dotted,black,out angle=90,in angle=-90] (Cb8B)..(Cb8A);
\draw[thin,black,out angle=90,in angle=-90](Cb8A)..(Cb7)..(Cb6B);
\draw[dotted,black,out angle=90,in angle=-90] (Cb6B)..(Cb6A);
\draw[thin,black,out angle=90,in angle=-90] (Cb6A)..(Cb10)..(Cb5)..(k1)..(Cb4);
\draw[dotted,black,out angle=90,in angle=-90] (Cb4)..(Cb3);
\draw[thin,black,out angle=90,in angle=200](Cb3)..(k3)..(Cb1);


\coordinate[label=right:$C'_n$] (Cn1) at (4,6);
\coordinate (Cn3) at (-1,2.5);
\coordinate[label=left:$C'_n$] (Cn6) at (-1,2);
\coordinate (Cn4) at (-1,1.5);
\coordinate (Cn5) at (6,-1);
\coordinate[label=right:$C'_n$] (Cn2) at (7,-7);

\draw[thin,black,out angle=90,in angle=-90] (Cn2)..(Cn5)..(k1)..(Cn4);
\draw[dotted,black,out angle=90,in angle=-90] (Cn4)..(Cn3);
\draw[thin,black,out angle=90,in angle=200](Cn3)..(k3)..(Cn1);


\coordinate (Eai0) at (-5,-3);
\coordinate (Eai1) at (-4,-3);
\node[point] at (Eai1) {};
\coordinate[label=right:$E'_{a,1}$] (Eai2) at (-3,-3);
\draw[thin,black,out angle=0,in angle=180] (Eai0)..(Eai1)..(Eai2);

\coordinate[label=right:$\vdots$] (Q3) at (-3,-3.5);

\coordinate (Eaf0) at (-5,-4.3);
\coordinate (Eaf1) at (-4,-4.3);
\node[point] at (Eaf1) {};
\coordinate[label=right:$E'_{a,{\varepsilon_2(M)}}$] (Eaf2) at (-3,-4.3);
\draw[thin,black,out angle=0,in angle=180] (Eaf0)..(Eaf1)..(Eaf2);

\coordinate (Ea2i0) at (-1,-3);
\coordinate (Ea2i1) at (0,-3);
\coordinate[label=right:$E'_{\frac{n}{2},1}$](Ea2i2) at (1,-3);
\draw[thick,gray,out angle=0,in angle=180] (Ea2i0)..(Ea2i1) ..(Ea2i2);
\node[pointG] at (Ea2i1) {};

\coordinate[label=right:$\vdots$] (Q3) at (1,-3.5);

\coordinate (Ea2f0) at (-1,-4.3);
\coordinate (Ea2f1) at (0,-4.3);
\coordinate[label=right:$E'_{\frac{n}{2},{\varepsilon_2(M)}}$] (Ea2f2) at (1,-4.3);
\draw[thick,gray,out angle=0,in angle=180] (Ea2f0)..(Ea2f1)..(Ea2f2);
\node[pointG] at (Ea2f1) {};

\coordinate (Ebi0) at (3,-3);
\coordinate (Ebi1) at (4,-3);
\coordinate[label=right:$E'_{b,1}$] (Ebi2) at (5,-3);
\draw[thin,black,out angle=0,in angle=180] (Ebi0)..(Ebi1)..(Ebi2);
\node[point] at (Ebi1) {};

\coordinate[label=right:$\vdots$] (Q3) at (5,-3.5);

\coordinate (Ebf0) at (3,-4.3);
\coordinate (Ebf1) at (4,-4.3);
\node[point] at (Ebf1) {};
\coordinate[label=right:$E'_{b,{\varepsilon_2(M)}}$] (Ebf2) at (5,-4.3);
\draw[thin,black,out angle=0,in angle=180] (Ebf0)..(Ebf1)..(Ebf2);



\coordinate (Fai0) at (-5,-5);
\coordinate (Fai1) at (-4,-5);
\node[point] at (Fai1) {};
\coordinate[label=right:$F'_{a,1}$] (Fai2) at (-3,-5);
\draw[thin,black,out angle=0,in angle=180] (Fai0)..(Fai1)..(Fai2);

\coordinate[label=right:$\vdots$](Q3) at (-3,-5.5);

\coordinate (Faf0) at (-5,-6.3);
\coordinate (Faf1) at (-4,-6.3);
\node[point] at (Faf1) {};
\coordinate[label=right:$F'_{a,{\varepsilon_3(M)}}$] (Faf2) at (-3,-6.3);
\draw[thin,black,out angle=0,in angle=180] (Faf0)..(Faf1)..(Faf2);

\coordinate (Fa2i0) at (-1,-5);
\coordinate (Fa2i1) at (0,-5);
\coordinate[label=right:$F'_{\frac{n}{2},1}$] (Fa2i2) at (1,-5);
\draw[thin,gray,out angle=0,in angle=180] (Fa2i0)..(Fa2i1)..(Fa2i2);
\node[pointG] at (Fa2i1) {};

\coordinate[label=right:$\vdots$] (Q3) at (1,-5.5);

\coordinate (Fa2f0) at (-1,-6.3);
\coordinate (Fa2f1) at (0,-6.3);
\coordinate[label=right:$F'_{\frac{n}{2},{\varepsilon_3(M)}}$] (Fa2f2) at (1,-6.3);
\draw[thin,gray,out angle=0,in angle=180] (Fa2f0)..(Fa2f1)..(Fa2f2);
\node[pointG] at (Fa2f1) {};

\coordinate (Fbi0) at (3,-5);
\coordinate (Fbi1) at (4,-5);
\node[point] at (Fbi1) {};
\coordinate[label=right:$F'_{b,1}$] (Fbi2) at (5,-5);
\draw[thin,black,out angle=0,in angle=180] (Fbi0)..(Fbi1)..(Fbi2);

\coordinate[label=right:$\vdots$](Q3) at (5,-5.5);

\coordinate (Fbf0) at (3,-6.3);
\coordinate (Fbf1) at (4,-6.3);
\node[point] at (Fbf1) {};
\coordinate[label=right:$F'_{b,{\varepsilon_3(M)}}$] (Fbf2) at (5,-6.3);
\draw[thin,black,out angle=0,in angle=180] (Fbf0)..(Fbf1)..(Fbf2);


\end{tikzpicture}
\captionof{figure}{Case $p \equiv 1 \mod 12$.}
\label{fig:fig1}
\end{center}

\begin{center}

\begin{tikzpicture}[use Hobby shortcut]

\coordinate[label=right:\,\,\,\,$P_k$] (k1) at (0,0);
\coordinate (k2) at (0,2);
\coordinate[label=right:\,\,$P_1$] (k3) at (0,4);
\node[point] at (k1) {};
\node[point] at (k3) {};

\coordinate[label=left:$C'_0$] (C01) at (-4,6);
\coordinate (C03) at (1,2.5);
\coordinate[label=right:$C'_0$] (C05) at (1,2);
\coordinate (C04) at (1,1.5);
\coordinate[label=left:$C'_0$] (C06) at (-6,-1.5);
\coordinate (C07) at (-6,-3);
\coordinate (C09) at (-6,-3.65);
\coordinate (C08) at (-6,-4.3);
\coordinate[label=left:$C'_0$] (C02) at (-6,-8);

\coordinate (C09A) at (-6,-3.85);
\coordinate (C09B) at (-6,-3.45);

\draw[thin,black,out angle=90,in angle=-90] (C02)..(C08)..(C09A);
\draw[dotted,black,out angle=90,in angle=-90]  (C09A)..(C09B);
\draw[thin,black,out angle=90,in angle=-90] (C09B)..(C07)..(C06)..(k1)..(C04);
\draw [dotted, black, out angle=90,in angle=-90] (C04)..(C03);
\draw [thin, black, out angle=90,in angle=-20](C03)..(k3)..(C01);


\coordinate[label=left:$C'_a$] (Ca1) at (-2,6);
\coordinate (Ca3) at (2.5,2.5);
\coordinate[label=right:$C'_a$] (Ca14) at (2.6,2);
\coordinate (Ca4) at (2.5,1.5);
\coordinate (Ca5) at (-1,-1);
\coordinate[label=left:$C'_a$] (Ca13) at (-2,-1.5);
\coordinate (Ca11) at (-5.5,-2.7);
\coordinate (Ca10) at (-6,-3);
\coordinate[label=right:$C'_a$] (Ca6) at (-5.7,-3.65);
\coordinate (Ca7) at (-6,-4.3);
\coordinate (Ca12) at (-5.5,-4.6);
\coordinate (Ca8) at (-4,-6);
\coordinate (Ca9) at (-4,-7.3);
\coordinate[label=right:$C'_a$] (Ca2) at (-4,-8);

\coordinate (Ca6A) at (-5.7,-3.85);
\coordinate (Ca6B) at (-5.7,-3.45);

\coordinate (Caa) at (-4,-6.9);
\coordinate (Caa1) at (-4,-6.5);
\coordinate [label=right:$0<a<\frac{n}{2}$] (Calab) at (-3.5,6.5);

\draw[thin,black,out angle=90,in angle=-90] (Ca2)..(Ca9)..(Caa);
\draw[dotted,black,out angle=90,in angle=-90] (Caa)..(Caa1);
\draw[thin,black,out angle=90,in angle=-90] (Caa1)..(Ca8)..(Ca12)..(Ca7)..(Ca6A);
\draw[dotted,black,out angle=90,in angle=-90] (Ca6A)..(Ca6B);
\draw[thin,black,out angle=90,in angle=-90](Ca6B)..(Ca10)..(Ca11)..(Ca5)..(k1)..(Ca4);
\draw[dotted,black,out angle=90,in angle=-90](Ca4)..(Ca3);
\draw[thin,black,out angle=90,in angle=-20](Ca3)..(k3)..(Ca1);



\coordinate[label=right:$C'_b$] (Cb1) at (2,6);
\coordinate (Cb3) at (-2.5,2.5);
\coordinate[label=left:$C'_b$] (Cb11) at (-2.6,2);
\coordinate (Cb4) at (-2.5,1.5);
\coordinate (Cb5) at (1,-1);
\coordinate[label=right:$C'_b$] (Cb13) at (1.8,-1.5);
\coordinate (Cb10) at (6.5,-2.7);
\coordinate (Cb6) at (7,-3);
\coordinate[label=left:$C'_b$] (Cb12) at (6.7,-3.65);
\coordinate (Cb7) at (7,-4.3);
\coordinate (Cb14) at (6.5,-4.7);
\coordinate (Cb8) at (4,-6);
\coordinate (Cb9) at (4,-7.3);
\coordinate[label=right:$C'_b$] (Cb2) at (4,-8);
\coordinate[label=right:$\frac{n}{2}<b<n$] (Cblab) at (1.5,6.5);

\coordinate (Cb12A) at (6.7,-3.85);
\coordinate (Cb12B) at (6.7,-3.45);

\coordinate (Cbb) at (4,-6.9);
\coordinate (Cbb1) at (4,-6.5);

\draw[thin,black,out angle=90,in angle=-90] (Cb2)..(Cb9)..(Cbb);
\draw[dotted,black,out angle=90,in angle=-90] (Cbb)..(Cbb1);
\draw[thin,black,out angle=90,in angle=-90] (Cbb1)..(Cb8)..(Cb14)..(Cb7)..(Cb12A);
\draw[dotted,black,out angle=90,in angle=-90] (Cb12A)..(Cb12B);
\draw[thin,black,out angle=90,in angle=-90] (Cb12B)..(Cb6)..(Cb10)..(Cb5)..(k1)..(Cb4);
\draw[dotted,black,out angle=90,in angle=-90] (Cb4)..(Cb3);
\draw[thin,black,out angle=90,in angle=200] (Cb3)..(k3)..(Cb1);

\coordinate[label=right:$C'_n$] (Cn1) at (4,6);
\coordinate (Cn3) at (-1,2.5);
\coordinate[label=left:$C'_n$] (Cn5) at (-1,2);
\coordinate (Cn4) at (-1,1.5);
\coordinate[label=right:$C'_n$] (Cn6) at (7,-1.5);
\coordinate (Cn7) at (7,-3);
\coordinate (Cn9) at (7,-3.65);
\coordinate (Cn8) at (7,-4.3);
\coordinate[label=right:$C'_n$] (Cn2) at (7,-8);

\coordinate (Cn9A) at (7,-3.85);
\coordinate (Cn9B) at (7,-3.45);

\draw[thin,black,out angle=90,in angle=-90] (Cn2)..(Cn8)..(Cn9A);
\draw[dotted,black,out angle=90,in angle=-90]  (Cn9A)..(Cn9B);
\draw[thin,black,out angle=90,in angle=-90] (Cn9B)..(Cn7)..(Cn6)..(k1)..(Cn4);
\draw [dotted, black, out angle=90,in angle=-90] (Cn4)..(Cn3);
\draw [thin, black, out angle=90,in angle=200](Cn3)..(k3)..(Cn1);



\coordinate (F01a) at (-7,-2.83);
\coordinate (F01b) at (2,-4.4);
\draw[thin,black, name path=F01] plot[smooth] coordinates{(F01a) (F01b)};

\coordinate (Fn1a) at (8,-2.83);
\coordinate (Fn1b) at (-1,-4.4);
\draw[thin,black,name path=Fn1] plot[smooth] coordinates{(Fn1a) (Fn1b)};

\path [name intersections={of=F01 and Fn1,by=i1}];

\node[point] at (i1) {};

\coordinate (F0ea) at (-7,-4.13);
\coordinate (F0eb) at (2,-5.7);
\draw[thin,black, name path=F0e] plot[smooth] coordinates{(F0ea) (F0eb)};

\coordinate (Fnea) at (8,-4.13);
\coordinate (Fneb) at (-1,-5.7);
\draw[thin,black, name path=Fne] plot[smooth] coordinates{(Fnea) (Fneb)};

\path [name intersections={of=F0e and Fne,by=i2}];

\node[point] at (i2) {};

\coordinate[label=right:$F'_{0,1}$] (Fi4) at (-8,-3);
\coordinate (Fi1) at (-6,-3);
\node[point] at (Fi1) {};
\coordinate (Fi3) at (7,-3);
\node[point] at (Fi3) {};
\coordinate[label=right:$F'_{n,1}$] (Fi4) at (8,-3);

\coordinate[label=right:$\vdots$] (Q3) at (8,-3.5);

\coordinate[label=right:$\vdots$] (Q3) at (-8,-3.5);

\coordinate[label=right:$F'_{0,\varepsilon_3(M)}$] (Fi4) at (-8,-4.4);
\coordinate (Ff1) at (-6,-4.3);
\node[point] at (Ff1) {};
\coordinate (Ff3) at (7,-4.3);
\node[point] at (Ff3) {};
\coordinate[label=right:$F'_{n,{\varepsilon_3(M)}}$] (Ff4) at (8,-4.3);



\coordinate (Eai0) at (-5,-6);
\coordinate (Eai1) at (-4,-6);
\node[point] at (Eai1) {};
\coordinate[label=right:$E'_{a,1}$] (Eai2) at (-3,-6);
\draw[thin,black,out angle=0,in angle=180] (Eai0)..(Eai1)..(Eai2);

\coordinate[label=right:$\vdots$](Q3) at (-3,-6.5);

\coordinate (Eaf0) at (-5,-7.3);
\coordinate (Eaf1) at (-4,-7.3);
\node[point] at (Eaf1) {};
\coordinate[label=right:$E'_{a,{\varepsilon_2(M)}}$] (Eaf2) at (-3,-7.3);
\draw[thin,black,out angle=0,in angle=180] (Eaf0)..(Eaf1)..(Eaf2);

\coordinate (Ebi0) at (3,-6);
\coordinate (Ebi1) at (4,-6);
\node[point] at (Ebi1) {};
\coordinate[label=right:$E'_{b,1}$] (Ebi2) at (5,-6);
\draw[thin,black,out angle=0,in angle=180] (Ebi0)..(Ebi1)..(Ebi2);

\coordinate[label=right:$\vdots$](Q3) at (5,-6.5);

\coordinate (Ebf0) at (3,-7.3);
\coordinate (Ebf1) at (4,-7.3);
\node[point] at (Ebf1) {};
\coordinate[label=right:$E'_{b,{\varepsilon_2(M)}}$] (Ebf2) at (5,-7.3);
\draw[thin,black,out angle=0,in angle=180] (Ebf0)..(Ebf1)..(Ebf2);


\end{tikzpicture}


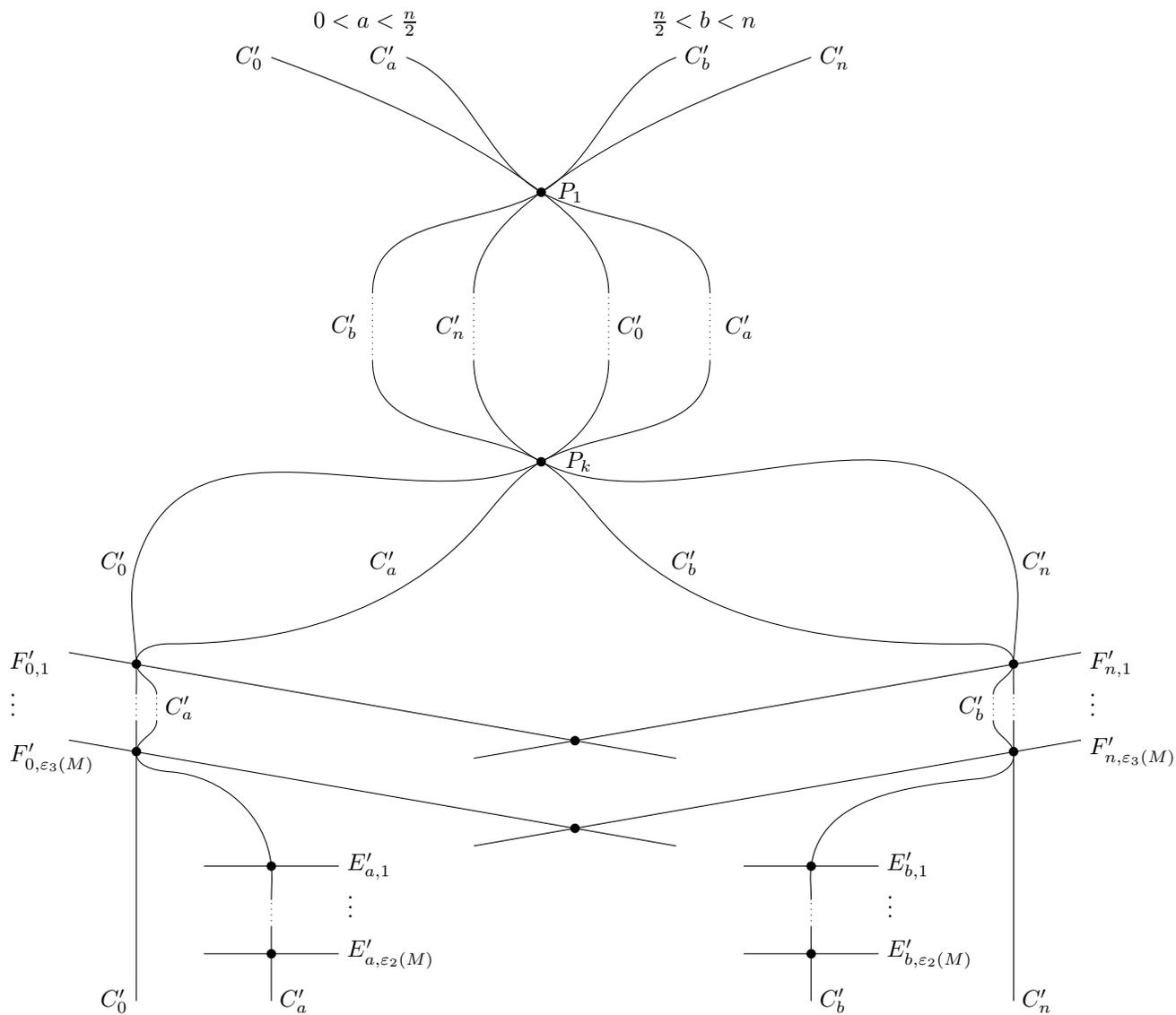
\captionof{figure}{Case $p \equiv 5 \mod 12$, $n$ odd.}
\label{fig:fig3}
\end{center}

\begin{center}
\begin{tikzpicture}[use Hobby shortcut]
\coordinate[label=right:\,\,\,\,$P_k$] (k1) at (0,0);
\coordinate (k2) at (0,2);
\coordinate[label=right:\,\,$P_1$] (k3) at (0,4);
\node[point] at (k1) {};
\node[point] at (k3) {};

\coordinate[label=left:$C'_0$] (C01) at (-4,6);
\coordinate (C03) at (1,2.5);
\coordinate[label=right:$C'_0$] (C05) at (1,2);
\coordinate (C04) at (1,1.5);
\coordinate[label=left:$C'_0$] (C06) at (-6,-1.5);
\coordinate (C07) at (-6,-3);
\coordinate (C09) at (-6,-3.65);
\coordinate (C08) at (-6,-4.3);
\coordinate[label=left:$C'_0$] (C02) at (-6,-8);

\coordinate (C09A) at (-6,-3.85);
\coordinate (C09B) at (-6,-3.45);

\draw[thin,black,out angle=90,in angle=-90] (C02)..(C08)..(C09A);
\draw[dotted,black,out angle=90,in angle=-90]  (C09A)..(C09B);
\draw[thin,black,out angle=90,in angle=-90] (C09B)..(C07)..(C06)..(k1)..(C04);
\draw [dotted, black, out angle=90,in angle=-90] (C04)..(C03);
\draw [thin, black, out angle=90,in angle=-20](C03)..(k3)..(C01);


\coordinate[label=left:$C'_a$] (Ca1) at (-2,6);
\coordinate (Ca3) at (2.5,2.5);
\coordinate[label=right:$C'_a$] (Ca14) at (2.6,2);
\coordinate (Ca4) at (2.5,1.5);
\coordinate (Ca5) at (-1,-1);
\coordinate[label=left:$C'_a$] (Ca13) at (-2,-1.5);
\coordinate (Ca11) at (-5.5,-2.7);
\coordinate (Ca10) at (-6,-3);
\coordinate[label=right:$C'_a$] (Ca6) at (-5.7,-3.65);
\coordinate (Ca7) at (-6,-4.3);
\coordinate (Ca12) at (-5.5,-4.5);
\coordinate (Ca8) at (-4,-6);
\coordinate (Ca9) at (-4,-7.3);
\coordinate[label=right:$C'_a$] (Ca2) at (-4,-8);

\coordinate (Ca6A) at (-5.7,-3.85);
\coordinate (Ca6B) at (-5.7,-3.45);

\coordinate (Caa) at (-4,-6.9);
\coordinate (Caa1) at (-4,-6.5);

\coordinate [label=right:$0<a<\frac{n}{2}$] (Calab) at (-3.5,6.5);

\draw[thin,black,out angle=90,in angle=-90] (Ca2)..(Ca9)..(Caa);
\draw[dotted,black,out angle=90,in angle=-90] (Caa)..(Caa1);
\draw[thin,black,out angle=90,in angle=-90] (Caa1)..(Ca8)..(Ca12)..(Ca7)..(Ca6A);
\draw[dotted,black,out angle=90,in angle=-90] (Ca6A)..(Ca6B);
\draw[thin,black,out angle=90,in angle=-90](Ca6B)..(Ca10)..(Ca11)..(Ca5)..(k1)..(Ca4);
\draw[dotted,black,out angle=90,in angle=-90](Ca4)..(Ca3);
\draw[thin,black,out angle=90,in angle=-20](Ca3)..(k3)..(Ca1);


\coordinate[label=right:$C'_{\frac{n}{2}}$] (Ca21) at (0,6);
\coordinate (Ca23) at (0,2.5);
\coordinate[label=right:$C'_{\frac{n}{2}}$] (Ca24) at (-0.05,2);
\coordinate[label=right:$C'_{\frac{n}{2}}$] (Ca25) at (0,-3.65);
\coordinate[label=right:$C'_{\frac{n}{2}}$] (Ca22) at (0,-8);

\coordinate (CaExtra1) at (0, 1.5);
\coordinate (CaExtra2) at (0, 2.5);

\coordinate (Ca25A) at (0,-3.85);
\coordinate (Ca25B) at (0,-3.45);

\coordinate (Caa2) at (0,-6.9);
\coordinate (Caa3) at (0,-6.5);

\draw[thin,black] plot[smooth] coordinates{(Ca22) (Caa2)};
\draw[dotted, black] plot[smooth] coordinates{(Caa2) (Caa3)};
\draw[thin,black] plot[smooth] coordinates{(Caa3) (Ca25A)};
\draw[dotted,black] plot[smooth] coordinates{(Ca25A) (Ca25B)};
\draw[thin,black] plot[smooth] coordinates{(Ca25B) (CaExtra1)};
\draw[dotted, black] plot[smooth] coordinates{(CaExtra1) (CaExtra2)};
\draw[thin,black] plot[smooth] coordinates{(CaExtra2) (Ca21)};


\coordinate[label=right:$C'_b$] (Cb1) at (2,6);
\coordinate (Cb3) at (-2.5,2.5);
\coordinate[label=left:$C'_b$] (Cb11) at (-2.6,2);
\coordinate (Cb4) at (-2.5,1.5);
\coordinate (Cb5) at (1,-1);
\coordinate[label=right:$C'_b$] (Cb13) at (1.8,-1.5);
\coordinate (Cb10) at (6.5,-2.7);
\coordinate (Cb6) at (7,-3);
\coordinate[label=left:$C'_b$] (Cb12) at (6.7,-3.65);
\coordinate (Cb7) at (7,-4.3);
\coordinate (Cb14) at (6.5,-4.5);
\coordinate (Cb8) at (4,-6);
\coordinate (Cb9) at (4,-7.3);
\coordinate[label=right:$C'_b$] (Cb2) at (4,-8);
\coordinate[label=right:$\frac{n}{2}<b<n$] (Cblab) at (1.5,6.5);

\coordinate (Cb12A) at (6.7,-3.85);
\coordinate (Cb12B) at (6.7,-3.45);

\coordinate (Cbb) at (4,-6.9);
\coordinate (Cbb1) at (4,-6.5);

\draw[thin,black,out angle=90,in angle=-90] (Cb2)..(Cb9)..(Cbb);
\draw[dotted,black,out angle=90,in angle=-90] (Cbb)..(Cbb1);
\draw[thin,black,out angle=90,in angle=-90] (Cbb1)..(Cb8)..(Cb14)..(Cb7)..(Cb12A);
\draw[dotted,black,out angle=90,in angle=-90] (Cb12A)..(Cb12B);
\draw[thin,black,out angle=90,in angle=-90] (Cb12B)..(Cb6)..(Cb10)..(Cb5)..(k1)..(Cb4);
\draw[dotted,black,out angle=90,in angle=-90] (Cb4)..(Cb3);
\draw[thin,black,out angle=90,in angle=200] (Cb3)..(k3)..(Cb1);

\coordinate[label=right:$C'_n$] (Cn1) at (4,6);
\coordinate (Cn3) at (-1,2.5);
\coordinate[label=left:$C'_n$] (Cn5) at (-1,2);
\coordinate (Cn4) at (-1,1.5);
\coordinate[label=right:$C'_n$] (Cn6) at (7,-1.5);
\coordinate (Cn7) at (7,-3);
\coordinate(Cn9) at (7,-3.65);
\coordinate (Cn8) at (7,-4.3);
\coordinate[label=right:$C'_n$] (Cn2) at (7,-8);

\coordinate (Cn9A) at (7,-3.85);
\coordinate (Cn9B) at (7,-3.45);

\draw[thin,black,out angle=90,in angle=-90] (Cn2)..(Cn8)..(Cn9A);
\draw[dotted,black,out angle=90,in angle=-90]  (Cn9A)..(Cn9B);
\draw[thin,black,out angle=90,in angle=-90] (Cn9B)..(Cn7)..(Cn6)..(k1)..(Cn4);
\draw [dotted, black, out angle=90,in angle=-90] (Cn4)..(Cn3);
\draw [thin, black, out angle=90,in angle=200](Cn3)..(k3)..(Cn1);



\coordinate (Fi0) at (-7,-3);
\coordinate (Fi1) at (-6,-3);
\node[point] at (Fi1) {};
\coordinate (Fi2) at (0,-3);
\node[point] at (Fi2) {};
\coordinate (Fi3) at (7,-3);
\node[point] at (Fi3) {};
\coordinate[label=right:$F'_{\infty,1}$] (Fi4) at (8,-3);
\draw[thin,black,out angle=0,in angle=180] (Fi0)..(Fi1)..(Fi2)..(Fi3)..(Fi4);

\coordinate[label=right:$\vdots$] (Q3) at (8,-3.5);

\coordinate (Ff0) at (-7,-4.3);
\coordinate (Ff1) at (-6,-4.3);
\node[point] at (Ff1) {};
\coordinate (Ff2) at (0,-4.3);
\node[point] at (Ff2) {};
\coordinate (Ff3) at (7,-4.3);
\node[point] at (Ff3) {};
\coordinate[label=right:$F'_{\infty,{\varepsilon_3(M)}}$] (Ff4) at (8,-4.3);
\draw[thin,black,out angle=0,in angle=180] (Ff0)..(Ff1)..(Ff2)..(Ff3)..(Ff4);



\coordinate (Eai0) at (-5,-6);
\coordinate (Eai1) at (-4,-6);
\node[point] at (Eai1) {};
\coordinate[label=right:$E'_{a,1}$] (Eai2) at (-3,-6);
\draw[thin,black,out angle=0,in angle=180] (Eai0)..(Eai1)..(Eai2);

\coordinate[label=right:$\vdots$](Q3) at (-3,-6.5);

\coordinate (Eaf0) at (-5,-7.3);
\coordinate (Eaf1) at (-4,-7.3);
\node[point] at (Eaf1) {};
\coordinate[label=right:$E'_{a,{\varepsilon_2(M)}}$] (Eaf2) at (-3,-7.3);
\draw[thin,black,out angle=0,in angle=180] (Eaf0)..(Eaf1)..(Eaf2);

\coordinate (Ea2i0) at (-1,-6);
\coordinate (Ea2i1) at (0,-6);
\node[point] at (Ea2i1) {};
\coordinate[label=right:$E'_{\frac{n}{2},1}$] (Ea2i2) at (1,-6);
\draw[thin,black,out angle=0,in angle=180] (Ea2i0)..(Ea2i1)..(Ea2i2);

\coordinate[label=right:$\vdots$] (Q3) at (1,-6.5);

\coordinate (Ea2f0) at (-1,-7.3);
\coordinate (Ea2f1) at (0,-7.3);
\node[point] at (Ea2f1) {};
\coordinate[label=right:$E'_{\frac{n}{2},{\varepsilon_2(M)}}$] (Ea2f2) at (1,-7.3);
\draw[thin,black,out angle=0,in angle=180] (Ea2f0)..(Ea2f1)..(Ea2f2);

\coordinate (Ebi0) at (3,-6);
\coordinate (Ebi1) at (4,-6);
\node[point] at (Ebi1) {};
\coordinate[label=right:$E'_{b,1}$] (Ebi2) at (5,-6);
\draw[thin,black,out angle=0,in angle=180] (Ebi0)..(Ebi1)..(Ebi2);

\coordinate[label=right:$\vdots$](Q3) at (5,-6.5);

\coordinate (Ebf0) at (3,-7.3);
\coordinate (Ebf1) at (4,-7.3);
\node[point] at (Ebf1) {};
\coordinate[label=right:$E'_{b,{\varepsilon_2(M)}}$] (Ebf2) at (5,-7.3);
\draw[thin,black,out angle=0,in angle=180] (Ebf0)..(Ebf1)..(Ebf2);

\end{tikzpicture}


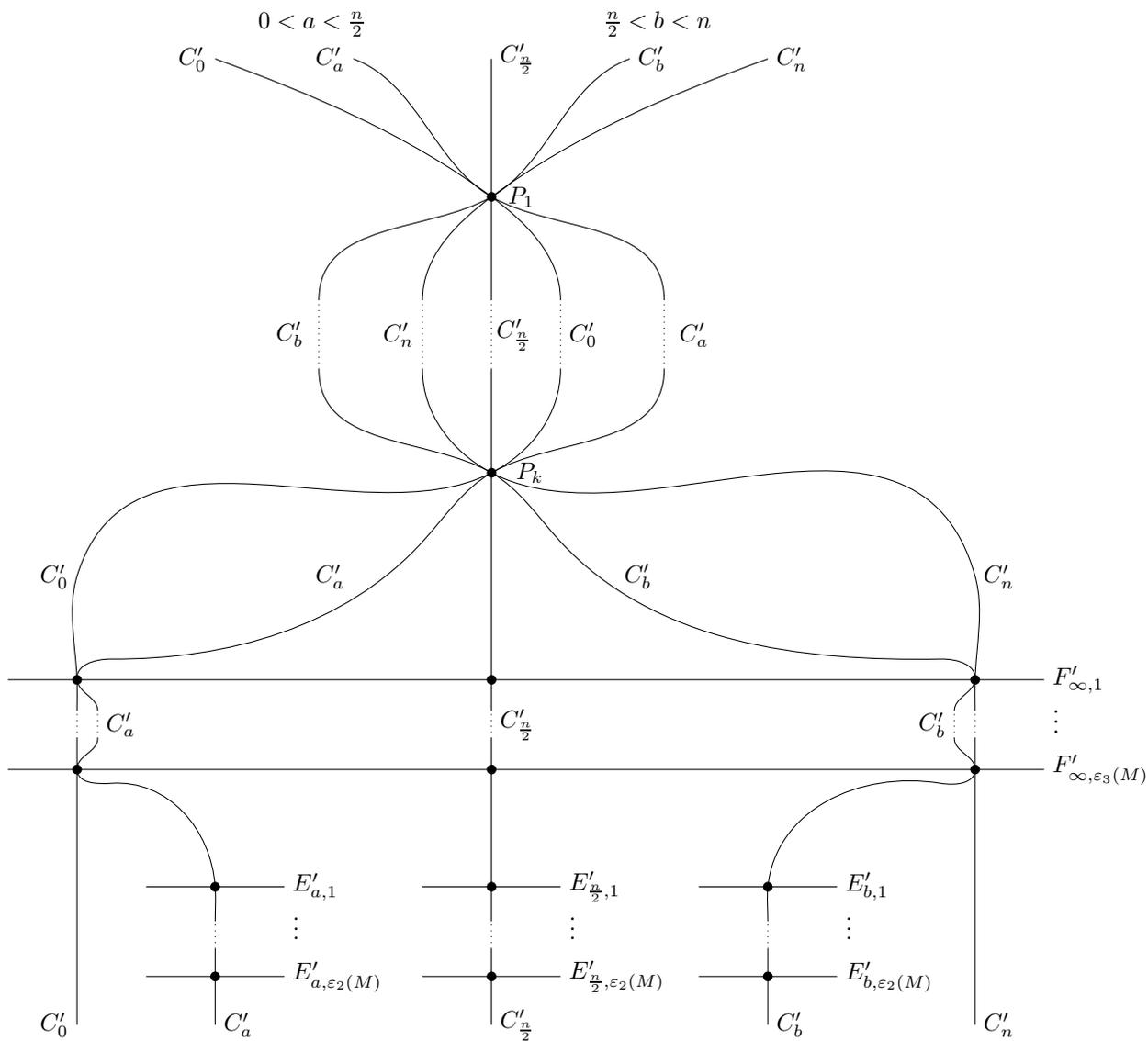
\captionof{figure}{Case $p \equiv 5 \mod 12$, $n$ even.}
\label{fig:fig2}
\end{center}

\begin{center}
\begin{tikzpicture}[use Hobby shortcut]

\coordinate[label=right:\,\,\,\,$P_k$] (k1) at (0,0);
\coordinate (k2) at (0,2);
\coordinate[label=right:\,\,$P_1$] (k3) at (0,4);
\node[point] at (k1) {};
\node[point] at (k3) {};

\coordinate[label=left:$C'_0$] (C01) at (-4,6);
\coordinate (C03) at (1,2.5);
\coordinate[label=right:$C'_0$] (C05) at (1,2);
\coordinate (C04) at (1,1.5);
\coordinate[label=left:$C'_0$] (C06) at (-6,-1.5);
\coordinate (C07) at (-6,-3);
\coordinate (C09) at (-6,-3.65);
\coordinate (C08) at (-6,-4.3);
\coordinate[label=left:$C'_0$] (C02) at (-6,-8);

\coordinate (C09A) at (-6,-3.85);
\coordinate (C09B) at (-6,-3.45);

\draw[thin,black,out angle=90,in angle=-90] (C02)..(C08)..(C09A);
\draw[dotted,black,out angle=90,in angle=-90]  (C09A)..(C09B);
\draw[thin,black,out angle=90,in angle=-90] (C09B)..(C07)..(C06)..(k1)..(C04);
\draw [dotted, black, out angle=90,in angle=-90] (C04)..(C03);
\draw [thin, black, out angle=90,in angle=-20](C03)..(k3)..(C01);


\coordinate[label=left:$C'_a$] (Ca1) at (-2,6);
\coordinate (Ca3) at (2.5,2.5);
\coordinate[label=right:$C'_a$] (Ca14) at (2.6,2);
\coordinate (Ca4) at (2.5,1.5);
\coordinate (Ca5) at (-1,-1);
\coordinate[label=left:$C'_a$] (Ca13) at (-2,-1.5);
\coordinate (Ca11) at (-5.5,-2.7);
\coordinate (Ca10) at (-6,-3);
\coordinate[label=right:$C'_a$] (Ca6) at (-5.7,-3.65);
\coordinate (Ca7) at (-6,-4.3);
\coordinate (Ca12) at (-5.5,-4.5);
\coordinate (Ca8) at (-4,-6);
\coordinate (Ca9) at (-4,-7.3);
\coordinate[label=right:$C'_a$] (Ca2) at (-4,-8);

\coordinate (Ca6A) at (-5.7,-3.85);
\coordinate (Ca6B) at (-5.7,-3.45);

\coordinate (Caa) at (-4,-6.9);
\coordinate (Caa1) at (-4,-6.5);

\coordinate [label=right:$0<a<\frac{n}{2}$] (Calab) at (-3.5,6.5);

\draw[thin,black,out angle=90,in angle=-90] (Ca2)..(Ca9)..(Caa);
\draw[dotted,black,out angle=90,in angle=-90] (Caa)..(Caa1);
\draw[thin,black,out angle=90,in angle=-90] (Caa1)..(Ca8)..(Ca12)..(Ca7)..(Ca6A);
\draw[dotted,black,out angle=90,in angle=-90] (Ca6A)..(Ca6B);
\draw[thin,black,out angle=90,in angle=-90](Ca6B)..(Ca10)..(Ca11)..(Ca5)..(k1)..(Ca4);
\draw[dotted,black,out angle=90,in angle=-90](Ca4)..(Ca3);
\draw[thin,black,out angle=90,in angle=-20](Ca3)..(k3)..(Ca1);


\coordinate[label=right:$C'_{\frac{n}{2}}$] (Ca21) at (0,6);
\coordinate (Ca23) at (0,2.5);
\coordinate[label=right:$C'_{\frac{n}{2}}$] (Ca24) at (-0.05,2);
\coordinate[label=right:$C'_{\frac{n}{2}}$] (Ca25) at (0,-3.65);
\coordinate[label=right:$C'_{\frac{n}{2}}$] (Ca22) at (0,-8);

\coordinate (CaExtra1) at (0, 1.5);
\coordinate (CaExtra2) at (0, 2.5);

\coordinate (Ca25A) at (0,-3.85);
\coordinate (Ca25B) at (0,-3.45);

\coordinate (Caa2) at (0,-6.9);
\coordinate (Caa3) at (0,-6.5);

\coordinate[label=right:(if $n$ is even)] (Ca2lab) at (-1,6.5);

\draw[thick,gray] plot[smooth] coordinates{(Ca22) (Caa2)};
\draw[dotted, gray] plot[smooth] coordinates{(Caa2) (Caa3)};
\draw[thick,gray] plot[smooth] coordinates{(Caa3) (Ca25A)};
\draw[dotted,gray] plot[smooth] coordinates{(Ca25A) (Ca25B)};
\draw[thick,gray] plot[smooth] coordinates{(Ca25B) (CaExtra1)};
\draw[dotted, gray] plot[smooth] coordinates{(CaExtra1) (CaExtra2)};
\draw[thick,gray] plot[smooth] coordinates{(CaExtra2) (Ca21)};


\coordinate[label=right:$C'_b$] (Cb1) at (2,6);
\coordinate (Cb3) at (-2.5,2.5);
\coordinate[label=left:$C'_b$] (Cb11) at (-2.6,2);
\coordinate (Cb4) at (-2.5,1.5);
\coordinate (Cb5) at (1,-1);
\coordinate[label=right:$C'_b$] (Cb13) at (1.8,-1.5);
\coordinate (Cb10) at (6.5,-2.7);
\coordinate (Cb6) at (7,-3);
\coordinate[label=left:$C'_b$] (Cb12) at (6.7,-3.65);
\coordinate (Cb7) at (7,-4.3);
\coordinate (Cb14) at (6.5,-4.5);
\coordinate (Cb8) at (4,-6);
\coordinate (Cb9) at (4,-7.3);
\coordinate[label=right:$C'_b$] (Cb2) at (4,-8);
\coordinate[label=right:$\frac{n}{2}<b<n$] (Cblab) at (1.5,6.5);

\coordinate (Cb12A) at (6.7,-3.85);
\coordinate (Cb12B) at (6.7,-3.45);

\coordinate (Cbb) at (4,-6.9);
\coordinate (Cbb1) at (4,-6.5);

\draw[thin,black,out angle=90,in angle=-90] (Cb2)..(Cb9)..(Cbb);
\draw[dotted,black,out angle=90,in angle=-90] (Cbb)..(Cbb1);
\draw[thin,black,out angle=90,in angle=-90] (Cbb1)..(Cb8)..(Cb14)..(Cb7)..(Cb12A);
\draw[dotted,black,out angle=90,in angle=-90] (Cb12A)..(Cb12B);
\draw[thin,black,out angle=90,in angle=-90] (Cb12B)..(Cb6)..(Cb10)..(Cb5)..(k1)..(Cb4);
\draw[dotted,black,out angle=90,in angle=-90] (Cb4)..(Cb3);
\draw[thin,black,out angle=90,in angle=200] (Cb3)..(k3)..(Cb1);

\coordinate[label=right:$C'_n$] (Cn1) at (4,6);
\coordinate (Cn3) at (-1,2.5);
\coordinate[label=left:$C'_n$] (Cn5) at (-1,2);
\coordinate (Cn4) at (-1,1.5);
\coordinate[label=right:$C'_n$] (Cn6) at (7,-1.5);
\coordinate (Cn7) at (7,-3);
\coordinate (Cn9) at (7,-3.65);
\coordinate (Cn8) at (7,-4.3);
\coordinate[label=right:$C'_n$] (Cn2) at (7,-8);

\coordinate (Cn9A) at (7,-3.85);
\coordinate (Cn9B) at (7,-3.45);

\draw[thin,black,out angle=90,in angle=-90] (Cn2)..(Cn8)..(Cn9A);
\draw[dotted,black,out angle=90,in angle=-90]  (Cn9A)..(Cn9B);
\draw[thin,black,out angle=90,in angle=-90] (Cn9B)..(Cn7)..(Cn6)..(k1)..(Cn4);
\draw [dotted, black, out angle=90,in angle=-90] (Cn4)..(Cn3);
\draw [thin, black, out angle=90,in angle=200](Cn3)..(k3)..(Cn1);



\coordinate (Ei0) at (-7,-3);
\coordinate (Ei1) at (-6,-3);
\node[point] at (Ei1) {};
\coordinate (Ei2) at (0,-3);
\node[pointG] at (Ei2) {};
\coordinate (Ei3) at (7,-3);
\node[point] at (Ei3) {};
\coordinate[label=right:$E'_{\infty,1}$] (Ei4) at (8,-3);
\draw[thin,black,out angle=0,in angle=180] (Ei0)..(Ei1)..(Ei2)..(Ei3)..(Ei4);

\coordinate[label=right:$\vdots$] (Q3) at (8,-3.5);

\coordinate (Ef0) at (-7,-4.3);
\coordinate (Ef1) at (-6,-4.3);
\node[point] at (Ef1) {};
\coordinate (Ef2) at (0,-4.3);

\coordinate (Ef3) at (7,-4.3);
\node[point] at (Ef3) {};
\coordinate[label=right:$E'_{\infty,{\varepsilon_2(M)}}$] (Ef4) at (8,-4.3);

\draw[thin,black,out angle=0,in angle=180] (Ef0)..(Ef4);

\node[pointG] at (Ef2) {};


\coordinate (Fai0) at (-5,-6);
\coordinate (Fai1) at (-4,-6);
\node[point] at (Fai1) {};
\coordinate[label=right:$F'_{a,1}$] (Fai2) at (-3,-6);
\draw[thin,black,out angle=0,in angle=180] (Fai0)..(Fai1)..(Fai2);

\coordinate[label=right:$\vdots$](Q3) at (-3,-6.5);

\coordinate (Faf0) at (-5,-7.3);
\coordinate (Faf1) at (-4,-7.3);
\node[point] at (Faf1) {};
\coordinate[label=right:$F'_{a,{\varepsilon_3(M)}}$] (Faf2) at (-3,-7.3);
\draw[thin,black,out angle=0,in angle=180] (Faf0)..(Faf1)..(Faf2);

\coordinate (Fa2i0) at (-1,-6);
\coordinate (Fa2i1) at (0,-6);
\node[pointG] at (Fa2i1) {};
\coordinate[label=right:$F'_{\frac{n}{2},1}$] (Fa2i2) at (1,-6);
\draw[thick,gray,out angle=0,in angle=180] (Fa2i0)..(Fa2i1)..(Fa2i2);

\coordinate[label=right:$\vdots$] (Q3) at (1,-6.5);

\coordinate (Fa2f0) at (-1,-7.3);
\coordinate (Fa2f1) at (0,-7.3);
\node[pointG] at (Fa2f1) {};
\coordinate[label=right:$F'_{\frac{n}{2},{\varepsilon_3(M)}}$] (Fa2f2) at (1,-7.3);
\draw[thick,gray,out angle=0,in angle=180] (Fa2f0)..(Fa2f1)..(Fa2f2);

\coordinate (Fbi0) at (3,-6);
\coordinate (Fbi1) at (4,-6);
\node[point] at (Fbi1) {};
\coordinate[label=right:$F'_{b,1}$] (Fbi2) at (5,-6);
\draw[thin,black,out angle=0,in angle=180] (Fbi0)..(Fbi1)..(Fbi2);

\coordinate[label=right:$\vdots$](Q3) at (5,-6.5);

\coordinate (Fbf0) at (3,-7.3);
\coordinate (Fbf1) at (4,-7.3);
\node[point] at (Fbf1) {};
\coordinate[label=right:$F'_{b,{\varepsilon_3(M)}}$] (Fbf2) at (5,-7.3);
\draw[thin,black,out angle=0,in angle=180] (Fbf0)..(Fbf1)..(Fbf2);


\end{tikzpicture}
\captionof{figure}{Case $p \equiv 7 \mod 12$.}
\label{fig:fig4}
\end{center}

\begin{center}
\begin{tikzpicture}[use Hobby shortcut]

\coordinate[label=right:\,\,\,\,$P_k$] (k1) at (0,0);
\coordinate (k2) at (0,2);
\coordinate[label=right:\,\,$P_1$] (k3) at (0,4);
\node[point] at (k1) {};
\node[point] at (k3) {};

\coordinate[label=left:$C'_0$] (C01) at (-4,6);
\coordinate (C03) at (1,2.5);
\coordinate[label=right:$C'_0$] (C05) at (1,2);
\coordinate (C04) at (1,1.5);
\coordinate[label=left:$C'_0$] (C06) at (-6,-1.5);
\coordinate (C07) at (-6,-3);
\coordinate (C09) at (-6,-3.65);
\coordinate (C08) at (-6,-4.3);
\coordinate[label=left:$C'_0$] (C02) at (-6,-8.5);

\coordinate (C09A) at (-6,-3.85);
\coordinate (C09B) at (-6,-3.45);

\coordinate (Ca10A) at (-6,-6.85);
\coordinate (Ca10B) at (-6,-6.45);

\draw[thin,black,out angle=90,in angle=-90] (C02)..(Ca10A);
\draw[dotted,black,out angle=90,in angle=-90] (Ca10A)..(Ca10B);
\draw[thin,black,out angle=90,in angle=-90] (Ca10B)..(C08)..(C09A);
\draw[dotted,black,out angle=90,in angle=-90]  (C09A)..(C09B);
\draw[thin,black,out angle=90,in angle=-90] (C09B)..(C07)..(C06)..(k1)..(C04);
\draw [dotted, black, out angle=90,in angle=-90] (C04)..(C03);
\draw [thin, black, out angle=90,in angle=-20](C03)..(k3)..(C01);


\coordinate[label=left:$C'_a$] (Ca1) at (-2,6);
\coordinate (Ca3) at (2.5,2.5);
\coordinate[label=right:$C'_a$] (Ca14) at (2.6,2);
\coordinate (Ca4) at (2.5,1.5);
\coordinate (Ca5) at (-1,-1);
\coordinate[label=left:$C'_a$] (Ca13) at (-2,-1.5);
\coordinate (Ca11) at (-5.5,-2.7);
\coordinate (Ca10) at (-6,-3);
\coordinate[label=right:$C'_a$] (Ca6) at (-5.7,-3.65);
\coordinate (Ca7) at (-6,-4.3);
\coordinate (Ca12) at (-5.7,-5.4);
\coordinate (Ca13) at (-6,-6);
\coordinate[label=right:$C'_a$] (Ca14) at (-5.7,-6.65);
\coordinate (Ca15) at (-6,-7.3);
\coordinate (Ca16) at (-5.7,-7.7);
\coordinate[label=right:$C'_a$] (Ca2) at (-3,-8.5);

\coordinate (Ca6A) at (-5.7,-3.85);
\coordinate (Ca6B) at (-5.7,-3.45);

\coordinate (Ca14A) at (-5.7,-6.85);
\coordinate (Ca14B) at (-5.7,-6.45);

\coordinate [label=right:$0<a<\frac{n}{2}$] (Calab) at (-3.5,6.5);

\draw[thin,black,out angle=110,in angle=-90] (Ca2)..(Ca16)..(Ca15)..(Ca14A);
\draw[dotted,black,out angle=90,in angle=-90] (Ca14A)..(Ca14B);
\draw[thin,black,out angle=90,in angle=-90](Ca14B)..(Ca13)..(Ca12)..(Ca7);
\draw[thin,black,out angle=90,in angle=-90](Ca7)..(Ca6A);
\draw[dotted,black,out angle=90,in angle=-90] (Ca6A)..(Ca6B);
\draw[thin,black,out angle=90,in angle=-90](Ca6B)..(Ca10)..(Ca11)..(Ca5)..(k1)..(Ca4);
\draw[dotted,black,out angle=90,in angle=-90](Ca4)..(Ca3);
\draw[thin,black,out angle=90,in angle=-20](Ca3)..(k3)..(Ca1);


\coordinate[label=right:$C'_b$] (Cb1) at (2,6);
\coordinate (Cb3) at (-2.5,2.5);
\coordinate[label=right:$C'_b$] (Cb14) at (-2.6,2);
\coordinate (Cb4) at (-2.5,1.5);
\coordinate (Cb5) at (1,-1);
\coordinate[label=left:$C'_b$] (Cb13) at (3,-1.5);
\coordinate (Cb11) at (6.5,-2.7);
\coordinate (Cb10) at (7,-3);
\coordinate[label=right:$C'_b$] (Cb6) at (6,-3.65);
\coordinate (Cb7) at (7,-4.3);
\coordinate (Cb12) at (6.7,-5.4);
\coordinate (Cb13) at (7,-6);
\coordinate[label=right:$C'_b$] (Cb14) at (6,-6.65);
\coordinate (Cb15) at (7,-7.3);
\coordinate (Cb16) at (6.5,-7.7);
\coordinate[label=right:$C'_b$] (Cb2) at (4,-8.5);

\coordinate (Cb6A) at (6.7,-3.85);
\coordinate (Cb6B) at (6.7,-3.45);

\coordinate (Cb14A) at (6.7,-6.85);
\coordinate (Cb14B) at (6.7,-6.45);

\coordinate [label=left:$0<b<\frac{n}{2}$] (Calab) at (3.5,6.5);

\draw[thin,black,out angle=70,in angle=-90] (Cb2)..(Cb16)..(Cb15)..(Cb14A);
\draw[dotted,black,out angle=90,in angle=-90] (Cb14A)..(Cb14B);
\draw[thin,black,out angle=90,in angle=-90](Cb14B)..(Cb13)..(Cb12)..(Cb7);
\draw[thin,black,out angle=90,in angle=-90](Cb7)..(Cb6A);
\draw[dotted,black,out angle=90,in angle=-90] (Cb6A)..(Cb6B);
\draw[thin,black,out angle=90,in angle=-90](Cb6B)..(Cb10)..(Cb11)..(Cb5)..(k1)..(Cb4);
\draw[dotted,black,out angle=90,in angle=-90](Cb4)..(Cb3);
\draw[thin,black,out angle=90,in angle=200](Cb3)..(k3)..(Cb1);

\coordinate[label=right:$C'_n$] (Cn1) at (4,6);
\coordinate (Cn3) at (-1,2.5);
\coordinate[label=left:$C'_n$] (Cn5) at (-1,2);
\coordinate (Cn4) at (-1,1.5);
\coordinate[label=right:$C'_n$] (Cn6) at (7,-1.5);
\coordinate (Cn7) at (7,-3);
\coordinate (Cn9) at (7,-3.65);
\coordinate (Cn8) at (7,-4.3);
\coordinate[label=right:$C'_n$] (Cn2) at (7,-8.5);

\coordinate (Cn9A) at (7,-3.85);
\coordinate (Cn9B) at (7,-3.45);

\coordinate (Cn10A) at (7,-6.85);
\coordinate (Cn10B) at (7,-6.45);

\draw[thin,black,out angle=90,in angle=-90] (Cn2)..(Cn10A);
\draw[dotted,black,out angle=90,in angle=-90] (Cn10A)..(Cn10B);
\draw[thin,black,out angle=90,in angle=-90] (Cn10B)..(Cn8)..(Cn9A);
\draw[dotted,black,out angle=90,in angle=-90]  (Cn9A)..(Cn9B);
\draw[thin,black,out angle=90,in angle=-90] (Cn9B)..(Cn7)..(Cn6)..(k1)..(Cn4);
\draw [dotted, black, out angle=90,in angle=-90] (Cn4)..(Cn3);
\draw [thin, black, out angle=90,in angle=200](Cn3)..(k3)..(Cn1);



\coordinate (Ei0) at (-7,-3);
\coordinate (Ei1) at (-6,-3);
\node[point] at (Ei1) {};
\coordinate (Ei3) at (7,-3);
\node[point] at (Ei3) {};
\coordinate[label=right:$E'_{\infty,1}$] (Ei4) at (8,-3);
\draw[thin,black,out angle=0,in angle=180] (Ei0)..(Ei1)..(Ei3)..(Ei4);

\coordinate[label=right:$\vdots$] (Q3) at (8,-3.5);

\coordinate (Ef0) at (-7,-4.3);
\coordinate (Ef1) at (-6,-4.3);
\node[point] at (Ef1) {};
\coordinate (Ef3) at (7,-4.3);
\node[point] at (Ef3) {};
\coordinate[label=right:$E'_{\infty,{\varepsilon_2(M)}}$] (Ef4) at (8,-4.3);
\draw[thin,black,out angle=0,in angle=180] (Ef0)..(Ef1)..(Ef3)..(Ef4);



\coordinate (F01a) at (-7,-5.83);
\coordinate (F01b) at (2,-7.4);
\draw[thin,black, name path=F01] plot[smooth] coordinates{(F01a) (F01b)};

\coordinate (Fn1a) at (8,-5.83);
\coordinate (Fn1b) at (-1,-7.4);
\draw[thin,black,name path=Fn1] plot[smooth] coordinates{(Fn1a) (Fn1b)};

\path [name intersections={of=F01 and Fn1,by=i1}];

\node[point] at (i1) {};

\coordinate (F0ea) at (-7,-7.14);
\coordinate (F0eb) at (2,-8.7);
\draw[thin,black, name path=F0e] plot[smooth] coordinates{(F0ea) (F0eb)};

\coordinate (Fnea) at (8,-7.14);
\coordinate (Fneb) at (-1,-8.7);
\draw[thin,black, name path=Fne] plot[smooth] coordinates{(Fnea) (Fneb)};

\path [name intersections={of=F0e and Fne,by=i2}];

\node[point] at (i2) {};

\coordinate[label=right:$F'_{0,1}$] (Fi4) at (-8,-6);
\coordinate (Fi1) at (-6,-6);
\node[point] at (Fi1) {};
\coordinate (Fi3) at (7,-6);
\node[point] at (Fi3) {};
\coordinate[label=right:$F'_{n,1}$] (Fi4) at (8,-6);

\coordinate[label=right:$\vdots$] (Q3) at (8,-6.5);
\coordinate[label=right:$\vdots$] (Q3) at (-8,-6.5);

\coordinate[label=right:$F'_{0,\varepsilon_3(M)}$] (Fi4) at (-8,-7.4);
\coordinate (Ff1) at (-6,-7.3);
\node[point] at (Ff1) {};
\coordinate (Ff3) at (7,-7.3);
\node[point] at (Ff3) {};
\coordinate[label=right:$F'_{n,{\varepsilon_3(M)}}$] (Ff4) at (8,-7.3);

\end{tikzpicture}

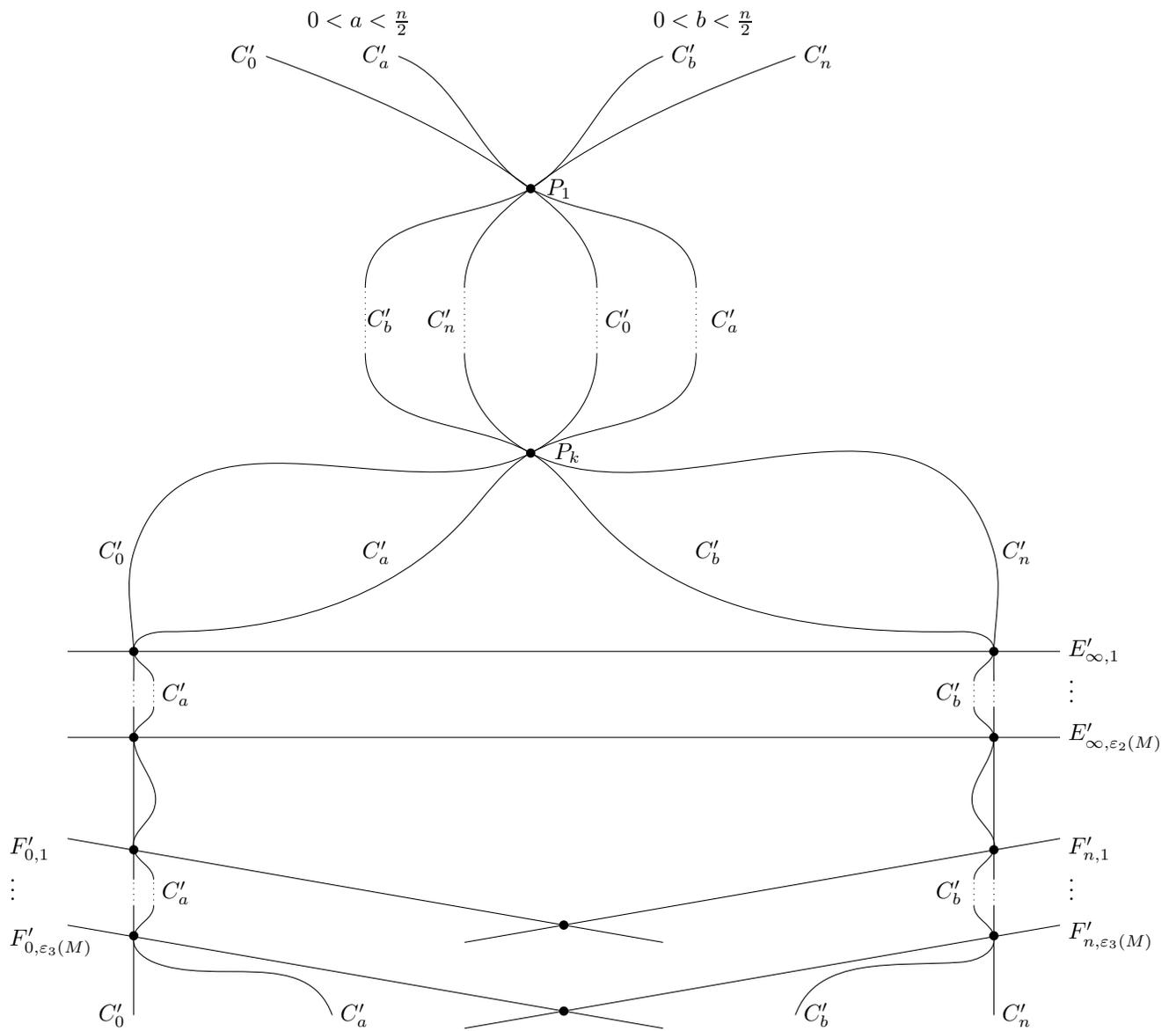
\captionof{figure}{Case $p\equiv 11 \mod 12$, $n$ odd.}
\label{fig:fig6}
\end{center}

\begin{center}
\begin{tikzpicture}[use Hobby shortcut]

\coordinate[label=right:\,\,\,\,$P_k$] (k1) at (0,0);
\coordinate (k2) at (0,2);
\coordinate[label=right:\,\,$P_1$] (k3) at (0,4);
\node[point] at (k1) {};
\node[point] at (k3) {};

\coordinate[label=left:$C'_0$] (C01) at (-4,6);
\coordinate (C03) at (1,2.5);
\coordinate[label=right:$C'_0$] (C05) at (1,2);
\coordinate (C04) at (1,1.5);
\coordinate[label=left:$C'_0$] (C06) at (-6,-1.5);
\coordinate (C07) at (-6,-3);
\coordinate (C09) at (-6,-3.65);
\coordinate (C08) at (-6,-4.3);
\coordinate[label=left:$C'_0$] (C02) at (-6,-8.5);

\coordinate (C09A) at (-6,-3.85);
\coordinate (C09B) at (-6,-3.45);

\coordinate (Ca10A) at (-6,-6.85);
\coordinate (Ca10B) at (-6,-6.45);

\draw[thin,black,out angle=90,in angle=-90] (C02)..(Ca10A);
\draw[dotted,black,out angle=90,in angle=-90] (Ca10A)..(Ca10B);
\draw[thin,black,out angle=90,in angle=-90] (Ca10B)..(C08)..(C09A);
\draw[dotted,black,out angle=90,in angle=-90]  (C09A)..(C09B);
\draw[thin,black,out angle=90,in angle=-90] (C09B)..(C07)..(C06)..(k1)..(C04);
\draw [dotted, black, out angle=90,in angle=-90] (C04)..(C03);
\draw [thin, black, out angle=90,in angle=-20](C03)..(k3)..(C01);


\coordinate[label=left:$C'_a$] (Ca1) at (-2,6);
\coordinate (Ca3) at (2.5,2.5);
\coordinate[label=right:$C'_a$] (Ca14) at (2.6,2);
\coordinate (Ca4) at (2.5,1.5);
\coordinate (Ca5) at (-1,-1);
\coordinate[label=left:$C'_a$] (Ca13) at (-2,-1.5);
\coordinate (Ca11) at (-5.5,-2.7);
\coordinate (Ca10) at (-6,-3);
\coordinate[label=right:$C'_a$] (Ca6) at (-5.7,-3.65);
\coordinate (Ca7) at (-6,-4.3);
\coordinate (Ca12) at (-5.7,-5.4);
\coordinate (Ca13) at (-6,-6);
\coordinate[label=right:$C'_a$] (Ca14) at (-5.7,-6.65);
\coordinate (Ca15) at (-6,-7.3);
\coordinate (Ca16) at (-5.7,-7.5);
\coordinate[label=right:$C'_a$] (Ca2) at (-3,-8.5);

\coordinate (Ca6A) at (-5.7,-3.85);
\coordinate (Ca6B) at (-5.7,-3.45);

\coordinate (Ca14A) at (-5.7,-6.85);
\coordinate (Ca14B) at (-5.7,-6.45);

\coordinate [label=right:$0<a<\frac{n}{2}$] (Calab) at (-3.5,6.5);

\draw[thin,black,out angle=90,in angle=-90] (Ca2)..(Ca16)..(Ca15)..(Ca14A);

\draw[dotted,black,out angle=90,in angle=-90] (Ca14A)..(Ca14B);

\draw[thin,black,out angle=90,in angle=-90](Ca14B)..(Ca13)..(Ca12)..(Ca7);
\draw[thin,black,out angle=90,in angle=-90](Ca7)..(Ca6A);
\draw[dotted,black,out angle=90,in angle=-90] (Ca6A)..(Ca6B);
\draw[thin,black,out angle=90,in angle=-90](Ca6B)..(Ca10)..(Ca11)..(Ca5)..(k1)..(Ca4);
\draw[dotted,black,out angle=90,in angle=-90](Ca4)..(Ca3);
\draw[thin,black,out angle=90,in angle=-20](Ca3)..(k3)..(Ca1);


\coordinate[label=right:$C'_{\frac{n}{2}}$] (Ca21) at (0,6);
\coordinate (Ca23) at (0,2.5);
\coordinate[label=right:$C'_{\frac{n}{2}}$] (Ca24) at (-0.05,2);
\coordinate[label=right:$C'_{\frac{n}{2}}$] (Ca25) at (0,-3.65);
\coordinate[label=right:$C'_{\frac{n}{2}}$] (Ca22) at (0,-8.5);

\coordinate (CaExtra1) at (0, 1.5);
\coordinate (CaExtra2) at (0, 2.5);

\coordinate (Ca25A) at (0,-3.85);
\coordinate (Ca25B) at (0,-3.45);

\coordinate (Caa2) at (0,-6.9);
\coordinate (Caa3) at (0,-6.5);

\draw[thin,black] plot[smooth] coordinates{(Ca22) (Caa2)};
\draw[dotted, black] plot[smooth] coordinates{(Caa2) (Caa3)};
\draw[thin, black] plot[smooth] coordinates{(Caa3) (Ca25A)};
\draw[dotted,black] plot[smooth] coordinates{(Ca25A) (Ca25B)};
\draw[thin, black] plot[smooth] coordinates{(Ca25B) (CaExtra1)};
\draw[dotted, black] plot[smooth] coordinates{(CaExtra1) (CaExtra2)};
\draw[thick, black] plot[smooth] coordinates{(CaExtra2) (Ca21)};


\coordinate[label=right:$C'_b$] (Cb1) at (2,6);
\coordinate (Cb3) at (-2.5,2.5);
\coordinate[label=right:$C'_b$] (Cb14) at (-2.6,2);
\coordinate (Cb4) at (-2.5,1.5);
\coordinate (Cb5) at (1,-1);
\coordinate[label=left:$C'_b$] (Cb13) at (3,-1.5);
\coordinate (Cb11) at (6.5,-2.7);
\coordinate (Cb10) at (7,-3);
\coordinate[label=right:$C'_b$] (Cb6) at (6,-3.65);
\coordinate (Cb7) at (7,-4.3);
\coordinate (Cb12) at (6.7,-5.4);
\coordinate (Cb13) at (7,-6);
\coordinate[label=right:$C'_b$] (Cb14) at (6,-6.65);
\coordinate (Cb15) at (7,-7.3);
\coordinate (Cb16) at (6.7,-7.5);
\coordinate[label=right:$C'_b$] (Cb2) at (4,-8.5);

\coordinate (Cb6A) at (6.7,-3.85);
\coordinate (Cb6B) at (6.7,-3.45);

\coordinate (Cb14A) at (6.7,-6.85);
\coordinate (Cb14B) at (6.7,-6.45);

\coordinate [label=left:$0<b<\frac{n}{2}$] (Calab) at (3.5,6.5);

\draw[thin,black,out angle=90,in angle=-90] (Cb2)..(Cb16)..(Cb15)..(Cb14A);

\draw[dotted,black,out angle=90,in angle=-90] (Cb14A)..(Cb14B);

\draw[thin,black,out angle=90,in angle=-90](Cb14B)..(Cb13)..(Cb12)..(Cb7);
\draw[thin,black,out angle=90,in angle=-90](Cb7)..(Cb6A);
\draw[dotted,black,out angle=90,in angle=-90] (Cb6A)..(Cb6B);
\draw[thin,black,out angle=90,in angle=-90](Cb6B)..(Cb10)..(Cb11)..(Cb5)..(k1)..(Cb4);
\draw[dotted,black,out angle=90,in angle=-90](Cb4)..(Cb3);
\draw[thin,black,out angle=90,in angle=200](Cb3)..(k3)..(Cb1);

\coordinate[label=right:$C'_n$] (Cn1) at (4,6);
\coordinate (Cn3) at (-1,2.5);
\coordinate[label=left:$C'_n$] (Cn5) at (-1,2);
\coordinate (Cn4) at (-1,1.5);
\coordinate[label=right:$C'_n$] (Cn6) at (7,-1.5);
\coordinate (Cn7) at (7,-3);
\coordinate (Cn9) at (7,-3.65);
\coordinate (Cn8) at (7,-4.3);
\coordinate[label=right:$C'_n$] (Cn2) at (7,-8.5);

\coordinate (Cn9A) at (7,-3.85);
\coordinate (Cn9B) at (7,-3.45);

\coordinate (Cn10A) at (7,-6.85);
\coordinate (Cn10B) at (7,-6.45);

\draw[thin,black,out angle=90,in angle=-90] (Cn2)..(Cn10A);
\draw[dotted,black,out angle=90,in angle=-90] (Cn10A)..(Cn10B);
\draw[thin,black,out angle=90,in angle=-90] (Cn10B)..(Cn8)..(Cn9A);
\draw[dotted,black,out angle=90,in angle=-90]  (Cn9A)..(Cn9B);
\draw[thin,black,out angle=90,in angle=-90] (Cn9B)..(Cn7)..(Cn6)..(k1)..(Cn4);
\draw [dotted, black, out angle=90,in angle=-90] (Cn4)..(Cn3);
\draw [thin, black, out angle=90,in angle=200](Cn3)..(k3)..(Cn1);



\coordinate (Ei0) at (-7,-3);
\coordinate (Ei1) at (-6,-3);
\node[point] at (Ei1) {};
\coordinate (Ei2) at (0,-3);
\node[point] at (Ei2) {};
\coordinate (Ei3) at (7,-3);
\node[point] at (Ei3) {};
\coordinate[label=right:$E'_{\infty,1}$] (Ei4) at (8,-3);
\draw[thin,black,out angle=0,in angle=180] (Ei0)..(Ei1)..(Ei2)..(Ei3)..(Ei4);

\coordinate[label=right:$\vdots$] (Q3) at (8,-3.5);

\coordinate (Ef0) at (-7,-4.3);
\coordinate (Ef1) at (-6,-4.3);
\node[point] at (Ef1) {};
\coordinate (Ef2) at (0,-4.3);
\node[point] at (Ef2) {};
\coordinate (Ef3) at (7,-4.3);
\node[point] at (Ef3) {};
\coordinate[label=right:$E'_{\infty,{\varepsilon_2(M)}}$] (Ef4) at (8,-4.3);
\draw[thin,black,out angle=0,in angle=180] (Ef0)..(Ef1)..(Ef2)..(Ef3)..(Ef4);


\coordinate (Fi0) at (-7,-6);
\coordinate (Fi1) at (-6,-6);
\node[point] at (Fi1) {};
\coordinate (Fi2) at (0,-6);
\node[point] at (Fi2) {};
\coordinate (Fi3) at (7,-6);
\node[point] at (Fi3) {};
\coordinate[label=right:$F'_{\infty,1}$] (Fi4) at (8,-6);
\draw[thin,black,out angle=0,in angle=180] (Fi0)..(Fi1)..(Fi2)..(Fi3)..(Fi4);

\coordinate[label=right:$\vdots$] (Q3) at (8,-6.5);

\coordinate (Ff0) at (-7,-7.3);
\coordinate (Ff1) at (-6,-7.3);
\node[point] at (Ff1) {};
\coordinate (Ff2) at (0,-7.3);
\node[point] at (Ff2) {};
\coordinate (Ff3) at (7,-7.3);
\node[point] at (Ff3) {};
\coordinate[label=right:$F'_{\infty,{\varepsilon_3(M)}}$] (Ff4) at (8,-7.3);
\draw[thin,black,out angle=0,in angle=180] (Ff0)..(Ff1)..(Ff2)..(Ff3)..(Ff4);

\end{tikzpicture}

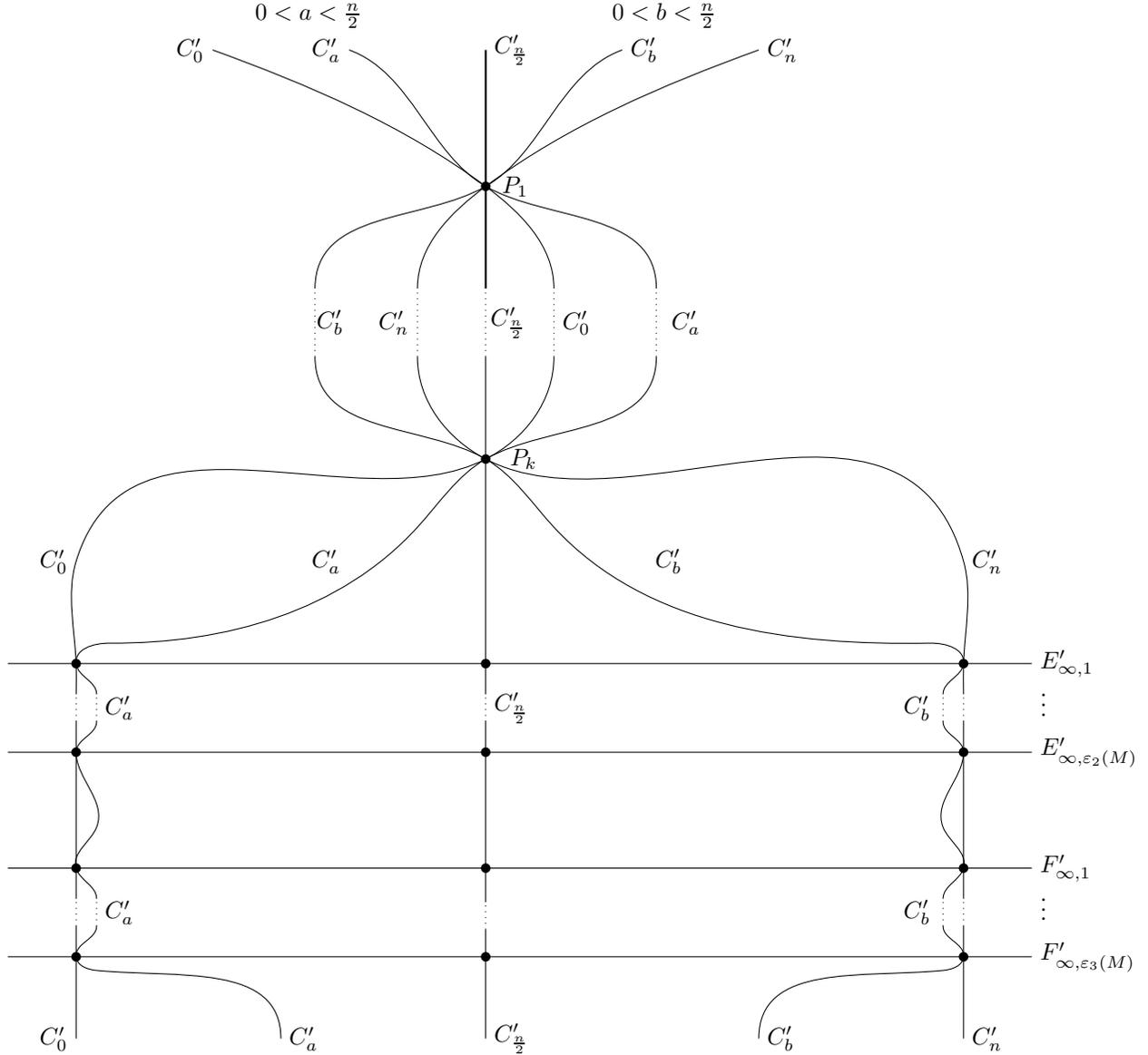
\captionof{figure}{Case $p\equiv 11 \mod 12$, $n$ even.}
\label{fig:fig5}
\end{center}

\end{appendices}

\bibliographystyle{amsalpha}
\bibliography{biblio.bib}

\Addresses

\end{document}